
\documentclass{amsproc}
\usepackage[arrow, curve, frame, matrix, graph, tips]{xy}
\usepackage{amssymb}
\usepackage{amsthm}
\usepackage{amsmath}
\usepackage{enumerate}
\usepackage[colorlinks=true,urlcolor=blue,linkcolor=blue]{hyperref}


\newtheorem{theorem}{Theorem}[section]
\newtheorem{lemma}[theorem]{Lemma}
\newtheorem{proposition}[theorem]{Proposition}
\newtheorem{corollary}[theorem]{Corollary}

\theoremstyle{definition}
\newtheorem{definition}[theorem]{Definition}
\newtheorem{example}[theorem]{Example}

\newtheorem{non-example}[theorem]{Non-Example}

\newtheorem{question}[theorem]{Question}
\newtheorem{terminology}[theorem]{Terminology}

\theoremstyle{remark}
\newtheorem{remark}[theorem]{Remark}

\newcommand{\tn}[1]{\textnormal{#1}}
\newcommand{\tnb}[1]{\textnormal{\bf #1}}
\newcommand{\tensor}{\otimes}
\newcommand{\PSh}[1]{\widehat{#1}}
\newcommand{\C}{\mathbb{C}}
\newcommand{\D}{\mathbb{D}}

\newcommand{\N}{\mathbb{N}}
\newcommand{\comp}{\circ}
\newcommand{\id}{\tn{id}}
\newcommand{\ca}[1]{\mathcal{#1}}
\newcommand{\ladj}{\dashv}
\newcommand{\iso}{\cong}
\newcommand{\catequiv}{\simeq}
\newcommand{\Set}{\tnb{Set}}

\newcommand{\Cat}{\tnb{Cat}}
\newcommand{\CAT}{\tnb{CAT}}
\newcommand{\op}{\tn{op}}

\renewcommand{\implies}{\Rightarrow}


\DeclareMathOperator*{\Tbar}{\overline{T}}
\DeclareMathOperator*{\Sbar}{\overline{S}}

\DeclareMathOperator*{\opE}{E}
\DeclareMathOperator*{\opEpr}{E^{\prime}}
\DeclareMathOperator*{\opF}{F}

\DeclareMathOperator*{\opTx}{T^{\times}}


\newcommand{\Enrich}[1]{#1{\textnormal{-Cat}}}

\DeclareMathOperator*{\colim}{\textnormal{colim}}

\newcommand{\Coll}[1]{#1{\textnormal{-Coll}}}
\newcommand{\NColl}[1]{#1{\textnormal{-Coll}_0}}

\newcommand{\RdColl}[1]{{\tn{Rd-}}#1{\textnormal{-Coll}}}
\newcommand{\PtRdColl}[1]{{\tn{PtRd-}}#1{\textnormal{-Coll}}}
\newcommand{\RdOp}[1]{{\tn{Rd-}}#1{\textnormal{-Op}}}
\newcommand{\CtRdOp}[1]{{\tn{CtRd-}}#1{\textnormal{-Op}}}
\newcommand{\Mult}[1]{#1{\textnormal{-Mult}}}
\newcommand{\Op}[1]{#1{\textnormal{-Op}}}
\newcommand{\NOp}[1]{#1{\textnormal{-Op}_0}}

\newcommand{\MND}{\textnormal{MND}}

\newcommand{\DISTMULT}{\textnormal{DISTMULT}}
\newcommand{\ADM}{\textnormal{Acc-DISTMULT}}
\newcommand{\AGMnd}{\textnormal{Acc-}{\ca G}{\tn{-MND}}}
\newcommand{\AFO}{\textnormal{Acc-FOp}}

\begin{document}

\title{Multitensor lifting and strictly unital higher category theory}

\author{Michael Batanin}
\address{Department of Mathematics, Macquarie University}
\email{mbatanin@ics.mq.edu.au}
\thanks{}
\author{Denis-Charles Cisinski}
\address{Universit\'{e} Paul Sabatier, Institut de Math\'{e}matiques de Toulouse}
\email{denis-charles.cisinski@math.univ-toulouse.fr}
\thanks{}
\author{Mark Weber}
\address{Department of Mathematics, Macquarie University}
\email{mark.weber.math@gmail.com}
\thanks{}

\begin{abstract}
In this article we extend the theory of lax monoidal structures, also known as multitensors, and the monads on categories of enriched graphs that they give rise to. Our first principal result -- the lifting theorem for multitensors -- enables us to see the Gray tensor product of 2-categories and the Crans tensor product of Gray categories as part of this framework. We define weak $n$-categories with strict units by means of a notion of reduced higher operad, using the theory of algebraic weak factorisation systems. Our second principal result is to establish a lax tensor product on the category of weak $n$-categories with strict units, so that enriched categories with respect to this tensor product are exactly weak $(n{+}1)$-categories with strict units.
\end{abstract}

\maketitle
\tableofcontents

\section{Introduction}

This paper continues the developments of \cite{BataninWeber-EnHop} and \cite{Weber-MultitensorsMonadsEnrGraphs} on the interplay between monads and multitensors in the globular approach to higher category theory, and expands considerably on an earlier preprint \cite{BatCisWeb-EnHopII}. To take an important example, according to \cite{Weber-MultitensorsMonadsEnrGraphs} there are two related combinatorial objects which can be used to describe the notion of Gray category. One has the monad $A$ on the category $\ca G^3(\Set)$ of 3-globular sets whose algebras are Gray categories, which was first described in \cite{Batanin-MonGlobCats}. On the other hand there is a multitensor (ie a lax monoidal structure) on the category $\ca G^2(\Set)$ of $2$-globular sets, such that categories enriched in $E$ are exactly Gray categories. The theory described in \cite{Weber-MultitensorsMonadsEnrGraphs} explains how $A$ and $E$ are related as part of a general theory which applies to all operads of the sort defined 
originally in \cite{Batanin-MonGlobCats}.

However there is a third object which is missing from this picture, namely, the Gray tensor product of 2-categories. It is a simpler object than $A$ and $E$, and categories enriched in $\Enrich 2$ for the Gray tensor product are exactly Gray categories. The purpose of this paper is to exhibit the Gray tensor product as part of our emerging framework. This is done by means of the lifting theorem for multitensors -- theorem(\ref{thm:lift-mult}) of this article.

Strict $n$-categories can be defined by iterated enrichment, and the lifting theorem leads one to hope that such an inductive definition can be found for a wider class of higher categorical structures. The second main result of this article -- theorem(\ref{thm:iterative-defn}) -- says that one has a similar process of iterated enrichment to capture weak $n$-categories with strict units. In this result the appropriate (lax) tensor product $\ca L_{\leq n}$ of weak $n$-categories with strict units is identified, so that categories enriched in weak $n$-categories with strict units using this tensor product are exactly weak $(n{+}1)$-categories with strict units.

In order to formulate this second result, we identify a new class of higher operad, these being the reduced $\ca T_{\leq n}$-operads of section(\ref{ssec:reduced-operads}). Moreover we describe the notion of contractibility for such operads which enables one to formalise the idea that a given higher categorical structure should have strict units. Then weak $n$-categories with strict units are defined as algebras of the universal contractible reduced $\ca T_{\leq n}$-operad, analogous to how weak $n$-categories were defined operadically in \cite{Batanin-MonGlobCats}. It turns out that the lifting theorem is particularly compatible with these new notions, and it is this fact which is chiefly responsible for theorem(\ref{thm:iterative-defn}).

Let us turn now to a more detailed introduction to this article. Recall \cite{BataninWeber-EnHop,Weber-MultitensorsMonadsEnrGraphs} that a \emph{multitensor} $(E,u,\sigma)$ on a category $V$ consists of n-ary tensor product functors $E_n:V^n \to V$, whose values on objects are denoted in any of the following ways
\[ \begin{array}{ccccccc} {E(X_1,...,X_n)} && {E_n(X_1,...,X_n)} && {\opE\limits_{1{\leq}i{\leq}n} X_i} && {\opE\limits_i X_i} \end{array} \]
depending on what is most convenient, together with unit and substitution maps
\[ \begin{array}{lcr} {u_X:Z \rightarrow E_1X} && {\sigma_{X_{ij}}:\opE\limits_i\opE\limits_j X_{ij} \rightarrow \opE\limits_{ij} X_{ij}} \end{array} \]
for all $X$, $X_{ij}$ from $V$ which are natural in their arguments and satisfy the obvious unit and associativity axioms. It is also useful to think of $(E,u,\sigma)$ more abstractly as a lax algebra structure on $V$ for the monoid monad{\footnotemark{\footnotetext{Recall that for a category $V$, an object of $MV$ is a finite sequence $(X_1,...,X_n)$ of objects of $V$, that one only has morphisms in $MV$ between sequences of the same length, and that such a morphism $(X_1,...,X_n) \to (Y_1,...,Y_n)$ consists of morphisms $f_i:X_i \to Y_i$ for $1 \leq i \leq n$.}}} $M$ on $\CAT$, and so to denote $E$ as a functor $E:MV \to V$. The basic example to keep in mind is that of a monoidal structure on $V$, for in this case $E$ is given by the $n$-ary tensor products, $u$ is the identity and the components of $\sigma$ are given by coherence isomorphisms for the monoidal structure.

A \emph{category enriched in $E$} consists of a $V$-enriched graph $X$ together with composition maps
\[ \kappa_{x_i} : \opE\limits_i X(x_{i-1},x_i) \rightarrow X(x_0,x_n) \]
for all $n \in \N$ and sequences $(x_0,...,x_n)$ of objects of $X$, satisfying the evident unit and associativity axioms. With the evident notion of $E$-functor (see \cite{BataninWeber-EnHop}), one has a category $\Enrich E$ of $E$-categories and $E$-functors together with a forgetful functor
\[ U^E : \Enrich E \rightarrow \ca GV. \]
When $E$ is a distributive multitensor, that is when $E_n$ commutes with coproducts in each variable, one can construct a monad $\Gamma E$ on $\ca GV$ over $\Set$. The object map of the underlying endofunctor is given by the formula
\[ \Gamma EX(a,b) = \coprod\limits_{a=x_0,...,x_n=b} \opE\limits_iX(x_{i-1},x_i), \]
the unit $u$ is used to provide the unit of the monad and $\sigma$ is used to provide the multiplication. The identification of the algebras of $\Gamma E$ and categories enriched in $E$ is witnessed by a canonical isomorphism $\Enrich E \iso \ca G(V)^{\Gamma E}$ over $\ca GV$. In section(\ref{sec:MultMndReview}) we recall the relevant aspects of the theory of multitensors and monads from \cite{Weber-MultitensorsMonadsEnrGraphs}.

If one restricts attention to unary operations, then $E_1$, $u$ and the components $\sigma_X:E_1^2X \to E_1X$ provide the underlying endofunctor, unit, and multiplication for a monad on $V$. This monad is called the \emph{unary part} of $E$. When the unary part of $E$ is the identity monad, the multitensor is a \emph{functor operad}. This coincides with existing terminology, see \cite{McClureSmith-CosimplicialAndLittleCubes} for instance, except that we don't in this paper consider any symmetric group actions. Since units for functor operads are identities, we denote any such as a pair $(E,\sigma)$, where as for general multitensors $E$ denotes the functor part and $\sigma$ the substitution.

By definition then, a functor operad is a multitensor. On the other hand, as observed in \cite{BataninWeber-EnHop} lemma(2.7), the unary part of a multitensor $E$ acts on $E$, in the sense that as a functor $E$ factors as
\[ \xygraph{{MV}="l" [r] {V^{E_1}}="m" [r] {V}="r" "l":"m"^-{}:"r"^-{U^{E_1}}} \]
and in addition, the substitution maps are morphisms of $E_1$-algebras. Moreover an $E$-category structure on a $V$-enriched graph $X$ includes in particular an $E_1$-algebra structure on each hom $X(a,b)$ of $X$ with respect to which the composition maps are morphisms of $E_1$-algebras. These observations lead to
\begin{question}\label{q:lift}
Given a multitensor $(E,u,\sigma)$ on a category $V$ can one find a functor operad $(E',\sigma')$ on $V^{E_1}$ such that $E'$-categories are exactly $E$-categories?
\end{question}
The first main result of this paper, theorem(\ref{thm:lift-mult}), says that question(\ref{q:lift}) has a nice answer: when $E$ is distributive and accessible and $V$ is cocomplete, one can indeed find a \emph{unique} distributive accessible such $E'$.

One case of our lifting theorem in the literature is in the work of Ginzburg and Kapranov on Koszul duality \cite{GinzburgKapranov-KoszulDualityOperads}. Formula (1.2.13) of that paper, in the case of a $K$-collection $E$ coming from an operad, implicitly involves the lifting of the multitensor corresponding (as in \cite{BataninWeber-EnHop} example(2.6)) to the given operad. For instance, our lifting theorem gives a general explanation for why one must tensor over $K$ in that formula. As we shall see in section(\ref{ssec:convolution-via-lifting}), another existing source of examples comes from Day convolution \cite{Day-Convolution}.

Taking $E$ to be the multitensor on $\ca G^n(\Set)$ such that $\Gamma E$ is the monad whose algebras are weak $(n{+}1)$-categories, one might hope that the statement that ``$E'$-categories are exactly $E$-categories'' in this case expresses a sense in which weak $(n{+}1)$-categories are categories enriched in weak $n$-categories for an appropriate tensor product $E'$. However as we explain in section(\ref{ssec:weak units}), the presence of \emph{weak} identity arrows as part of the structure of weak $(n{+}1)$-category prevents such a pleasant interpretation directly, because composition with weak identity arrows gives the homs of an $E$-category extra structure, as can already be seen for the case $n=1$ of bicategories.

Having identified this issue, it is natural to ask
\begin{question}\label{q:iterative}
Does the lifting theorem produce the tensor products enabling weak $n$-categories with strict units to be obtained by iterated enrichment?
\end{question}
\noindent For this question to be well-posed it is necessary to define weak $n$-categories with strict units.

To do this it is worth first meditating a little on the operadic definition of weak $n$-category. Indeed since such a definition first appeared in \cite{Batanin-MonGlobCats}, as the algebras of a weakly initial higher operad of a certain type, there have been a number of works that have refined our understanding. In \cite{Leinster-HDA-book} this topic was given a lovely exposition, the focus was shifted to considering operads that were strictly initial with the appropriate properties rather than weakly so, and an alternative notion of contractibility was given which, for expository purposes, is a little simpler. Thus in this article we use Leinster contractibility throughout, however as we point out in remark(\ref{rem:Bat-defn}), one can give a completely analogous development using the original notions.

On a parallel track, Grandis and Tholen initiated an algebraic study of the weak factorisation systems arising in abstract homotopy theory in \cite{GrandisTholen-NatWFS}, this was refined by Garner in \cite{Garner-SmallObjectArgument} and then applied to higher category theory in \cite{Garner-LeinsterDefinition} and \cite{Garner-HomHigherCats}. We devote a substantial part of section(\ref{sec:weak-n-cat-review}) to a discussion of these developments, both by way of an updated review of the definition of weak $n$-category, and because we make essential use of Garner's theory of weak factorisation systems later in the article.

Assimilating these developments one can present an operadic definition of weak $n$-categories in the following way. One has a presheaf category $\NColl{\ca T_{\leq n}}$ of $\ca T_{\leq n}$-collections over $\Set$ (called ``normalised collections'' in \cite{Batanin-MonGlobCats}) and there are two finitary monads on $\NColl{\ca T_{\leq n}}$ the first of whose algebras are $\ca T_{\leq n}$-operads over $\Set$. To give an algebra structure for the second of these monads is to exhibit a given $\ca T_{\leq n}$-collection over $\Set$ as contractible, and one obtains this monad from an algebraic weak factorisation system, in the sense of \cite{Garner-SmallObjectArgument}, on $\NColl{\ca T_{\leq n}}$. In general the monad coproduct of a pair of finitary monads on a locally finitely presentable category is itself finitary, and so its category of algebras will also be locally finitely presentable. Applied to the two monads in question, the algebras of their monad coproduct are contractible $\ca T_{\leq n}$-operads over $\Set$, the 
initial such being the operad for weak $n$-categories.

We now adapt this to give an operadic definition weak $n$-category with strict units in the following way. First, one considers only reduced $\ca T_{\leq n}$-operads, which are those that contain a unique unit operation of each type. Second, one strengthens the notion of contractibility, by giving an algebraic weak factorisation system on the category $\PtRdColl{\ca T_{\leq n}}$ of pointed reduced $\ca T_{\leq n}$-collections. This notion of underlying collection includes the substitution of unique unit operations as part of the structure. The purpose of this refined contractibility notion is to encode the idea that the unique unit operations of the operads under consideration really do behave as strict identities. So as before one has a pair of finitary monads, but this time on the category $\PtRdColl{\ca T_{\leq n}}$, one whose algebras are reduced operads, and the other whose algebras are pointed reduced collections that are exhibited as contractible in this stronger sense. One then imitates the scheme laid out in the previous 
paragraph, defining the operad for weak $n$-categories with strict units as the initial object of the category of algebras of the coproduct of these two monads on $\PtRdColl{\ca T_{\leq n}}$.

A novelty of this definition is that it uses the extra generality afforded by the theory of algebraic weak factorisation system's of \cite{Garner-SmallObjectArgument}. Namely, while in the standard theory of weak factorisation systems one considers those that are generated, via the small object argument of Quillen, by a set of generating cofibrations. In Garner's algebraic version of the small object argument, one can consider a category of generating cofibrations. The algebraic weak factorisation system we consider on $\PtRdColl{\ca T_{\leq n}}$ has a category of generating cofibrations, and it is the morphisms of this category by means of which the strictness of units is expressed.

Having obtained a reasonable definition of weak $n$-category with strict units, we then proceed to answer question(\ref{q:iterative}) in the affirmative, this being our second main result theorem(\ref{thm:iterative-defn}). For any $\ca T_{\leq n{+}1}$-operad $B$ over $\Set$, there is a $\ca T_{\leq n}$-operad $h(B)$, whose algebras are by definition the structure possessed by the homs of a $B$-algebra. This is the object map of a functor $h$, and if one restricts attention to reduced $B$, and then contractible reduced $B$, $h$ in either of these contexts becomes a left adjoint, which is the formal fact that enables theorem(\ref{thm:iterative-defn}) to go through. 

This paper is organised in the following way. In section(\ref{sec:MultMndReview}) we review the theory of multitensors from \cite{Weber-MultitensorsMonadsEnrGraphs}. Then in section(\ref{sec:lifting-theorem}) we discuss the lifting theorem. The theorem itself is formulated and proved in section(\ref{ssec:lifting-theorem-proof}), applications are presented in sections(\ref{ssec:GrayCrans-key-examples}) and (\ref{ssec:convolution-via-lifting}), and the 2-functorial aspects of the lifting theorem are presented in section(\ref{ssec:functoriality-lifting}). We give a review of the definition of weak $n$-category and the theory of algebraic weak factorisation systems in section(\ref{sec:weak-n-cat-review}). In section(\ref{sec:strictly-unital-weak-n-cats}) we present our definition of weak $n$-category with strict units, and then the tensor products which exhibit this notion as arising by iterated enrichment are produced in section(\ref{sec:strictly-unital-weak-n-cats-via-iterated-enrichment}).

\emph{Notation and terminology}. Efforts have been made to keep the notation and terminology of this article consistent with \cite{Weber-MultitensorsMonadsEnrGraphs}. The various other standard conventions and abuses that we adopt include regarding the Yoneda embedding $\C \to \PSh{\C}$ as an inclusion, and so by the Yoneda lemma regarding $x \in XC$ for $X \in \PSh{\C}$ as an arrow $x:C \to X$ in $\PSh{\C}$. To any such $X$, and more generally any pseudo-functor $X:\C^{\op} \to \Cat$, the Grothendieck construction gives the associated fibration into $\C$, the domain of which we denote as $\tn{el}(X)$ and call the ``category of elements of $X$''. The objects of the topologists' simplicial category $\Delta$ are as usual written as ordinals $[n] = \{0<...<n\}$ for $n \in \N$, and regarded as a full subcategory of $\Cat$. Thus in particular $[1]$ is the category consisting of one non-identity arrow, and $\ca E^{[1]}$ denotes the arrow category of a category $\ca E$. We use a different notation for the 
algebraists' simplicial category $\Delta_+$, objects here being regarded as ordinals $n = \{1<...<n\}$ for $n \in \N$. In section(\ref{ssec:reduced-operads}), we adopt the abuse of identifying the left adjoint reflection of the inclusion of a full subcategory with the idempotent monad generated by the adjunction.

\section{Review of the theory of multitensors}
\label{sec:MultMndReview}

Given a category $V$, $\ca GV$ is the category of graphs enriched in $V$. Thus $\ca G1=\Set$ and $\ca G^n(\Set)$ is equivalent to the category of $n$-globular sets. As a category $\ca GV$ is at least as good as $V$. Two results from \cite{Weber-MultitensorsMonadsEnrGraphs} which express this formally, are proposition(2.2.3) which explains how colimits in $\ca GV$ are constructed from those in $V$, and theorem(2.2.7) from which it follows that if $V$ is locally presentable (resp. a Grothendieck topos, resp. a presheaf topos) then so is $\ca GV$.

Certain colimits in $\ca GV$ are very easy to understand directly, namely those connected colimits in which the arrows of the diagram are identities on objects. For in this case, as pointed out in remark(2.2.5) of \cite{Weber-MultitensorsMonadsEnrGraphs}, one can just take the object set of the colimit to be that of any of the $V$-graphs appearing in the diagram, and compute the colimit one hom at a time in the expected way. These colimits will play a basic role in the developments of section(\ref{ssec:lifting-theorem-proof}).

Formal properties  of the endofunctor $\ca G:\CAT \to \CAT$ were discussed in section(2.1) of \cite{Weber-MultitensorsMonadsEnrGraphs}. In particular $\ca G$ preserves Eilenberg Moore objects, so that given a monad $T$ on a category $V$, one has an isomorphism $\ca G(V)^{\ca G(T)} \iso \ca G(V^T)$ expressing that the algebras of the monad $\ca GT$ on $\ca GV$ are really just graphs enriched in the category $V^T$ of algebras of $T$.

As explained in the introduction to this article, given a distributive multitensor $E$ on a category $V$ with coproducts, the formula
\[ \Gamma EX(a,b) = \coprod\limits_{a=x_0,...,x_n=b} \opE\limits_iX(x_{i-1},x_i) \]
describes the object map of the underlying endofunctor of the monad $\Gamma E$ on the category $\ca GV$ of graphs enriched in $V$, whose algebras are $E$-categories. The assignment $(V,E) \mapsto (\ca GV,\Gamma E)$ is itself the object map of a locally fully faithful 2-functor
\[ \Gamma : \DISTMULT \longrightarrow \MND(\CAT/\Set) \]
from the full sub-2-category $\DISTMULT$ of the 2-category of lax monoidal categories, lax monoidal functors and monoidal natural transformations, consisting of those $(V,E)$ where $V$ has coproducts and $E$ is distributive, to the 2-category of monads, as defined in \cite{Street-FTM}, in the 2-category $\CAT/\Set$.

From this perspective then, the category $\ca GV$ is regarded as being ``over $\Set$'', that is to say, it is taken together with the functor $(-)_0 : \ca GV \to \Set$ which sends a $V$-graph to its set of objects. A monad in $\CAT/\Set$ is called a monad \emph{over} $\Set$, and the monad $\Gamma E$ is over $\Set$ because:
(1) for any $V$-graph $X$, the objects of $\Gamma E X$ are those of $X$;
(2) for any morphism of $V$-graphs $f:X \to Y$, the object map of $\Gamma E f$ is that of $f$; and (3) the components of the unit and multiplication of $\Gamma E$ are identity-on-objects morphisms of $V$-graphs. From the point of view of structure, $\Gamma E$ being over $\Set$ says that the structure of an $E$-category gives nothing at the level of objects.

Section(3), and especially theorem(3.3.1) of \cite{Weber-MultitensorsMonadsEnrGraphs}, contains a careful analysis of how properties of $E$ correspond to properties of the induced monad $\Gamma E$. The particular fact following from that discussion that we shall use below is recorded here in lemma(\ref{lem:EAcc->GammaEAcc}). Recall that a multitensor $E$ is $\lambda$-accessible for some regular cardinal $\lambda$, when the underlying functor $E:MV \to V$ preserves $\lambda$-filtered colimits, and that this condition is equivalent to each of the $E_n:V^n \to V$ preserving $\lambda$-filtered colimits in each variable (see \cite{Weber-MultitensorsMonadsEnrGraphs} section(3.3) for more discussion).
\begin{lemma}\label{lem:EAcc->GammaEAcc}
Suppose that $V$ is a cocomplete category, $\lambda$ is a regular cardinal and $E$ is distributive multitensor on $V$. Then $E$ is $\lambda$-accessible iff $\Gamma E$ is $\lambda$-accessible.
\end{lemma}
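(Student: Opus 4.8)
The plan is to prove the two implications separately, reducing everything to the hom-wise description of colimits in $\ca GV$. Recall that $\lambda$-accessibility of $E$ means each $E_n\colon V^n \to V$ preserves $\lambda$-filtered colimits (equivalently, in each variable separately, since for $\lambda$-filtered $D$ the diagonal $D \to D^n$ is final), while $\lambda$-accessibility of $\Gamma E$ means its underlying endofunctor preserves $\lambda$-filtered colimits in $\ca GV$. I will freely use proposition(2.2.3), which computes homs of colimits of $V$-graphs as colimits in $V$, and remark(2.2.5), which says that a connected colimit of $V$-graphs whose transition maps are identities on objects is computed one hom at a time on a fixed object set.

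For the direction $\Gamma E$ $\lambda$-accessible $\implies E$ $\lambda$-accessible, the idea is to realise each $E_n(X_1,\ldots,X_n)$ as a single hom of $\Gamma E$ applied to a \emph{path} $V$-graph. Given $X_1,\ldots,X_n \in V$, let $P$ be the $V$-graph with object set $\{0,1,\ldots,n\}$, with $P(i{-}1,i)=X_i$, and with every other hom equal to the initial object $\emptyset$ of $V$ (available since $V$ is cocomplete). Because $E$ is distributive, $E_m$ sends any family containing an initial object to the initial object, so in the coproduct defining $\Gamma E\,P(0,n)$ the only surviving sequence is $0,1,\ldots,n$ itself, whence $\Gamma E\,P(0,n) \iso E_n(X_1,\ldots,X_n)$. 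Now fix a variable $i$ and a $\lambda$-filtered diagram $\alpha \mapsto X_i^\alpha$ in $V$, form the corresponding diagram of path graphs $P^\alpha$ (which is identity-on-objects, since only the $i$-th hom varies), and take its colimit. By remark(2.2.5) this colimit is computed hom-by-hom, so $\colim_\alpha P^\alpha$ is the path graph of $\colim_\alpha X_i^\alpha$; since $\Gamma E$ is over $\Set$ it carries this identity-on-objects $\lambda$-filtered diagram to another such diagram, and applying the hypothesis together with remark(2.2.5) once more gives
\[ E_n(X_1,\ldots,\colim\nolimits_\alpha X_i^\alpha,\ldots,X_n) \iso \bigl(\Gamma E\,\colim\nolimits_\alpha P^\alpha\bigr)(0,n) \iso \colim\nolimits_\alpha \bigl(\Gamma E\,P^\alpha\bigr)(0,n) \iso \colim\nolimits_\alpha E_n(X_1,\ldots,X_i^\alpha,\ldots,X_n), \]
which is exactly preservation of $\lambda$-filtered colimits by $E_n$ in the $i$-th variable.

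For the converse, $E$ $\lambda$-accessible $\implies \Gamma E$ $\lambda$-accessible, I would take a $\lambda$-filtered diagram $X\colon D \to \ca GV$ with colimit $X$ and check that the canonical map $\colim_D \Gamma E\,X_d \to \Gamma E\,X$ is invertible. On objects both sides are $\colim_D (X_d)_0$, so it suffices to fix a pair $a,b$ of objects, represented at some stage. Using proposition(2.2.3), each hom $X(x_{i-1},x_i)$ is the $\lambda$-filtered colimit of the corresponding homs of the $X_d$, and the set of paths from $a$ to $b$ of each length $n$ is built from finite products of $X_0 = \colim_D (X_d)_0$, hence is itself the $\lambda$-filtered colimit of the sets of such paths in the $X_d$ (finite products commute with $\lambda$-filtered colimits in $\Set$). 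Feeding these into the defining formula, one must interchange a coproduct indexed by a $\lambda$-filtered colimit of index sets, whose summands are values of $E_n$ on $\lambda$-filtered colimits of homs, with the outer colimit over $D$. The joint $\lambda$-accessibility of $E_n$ handles the summands, arbitrary coproducts commute with the colimit, and the index-set bookkeeping is a formal interchange, rewriting $\colim_D \coprod_{i \in I_d}(-)$ as a colimit over the category of elements of $I\colon D \to \Set$. Assembling these yields $(\Gamma E\,X)(a,b) \iso \colim_D (\Gamma E\,X_d)(a,b)$.

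I expect the converse to be where the real work lies. The forward extraction via path graphs is essentially formal once distributivity is used to isolate a single term, whereas in the converse the index set of the coproduct varies along $D$, so the argument is not simply ``apply an accessible functor hom-by-hom'' but requires commuting the path-indexed coproduct past the filtered colimit. The cleanest way to organise this is to present both $\coprod$ and $E_n$ as colimit-type constructions and to invoke that $\lambda$-filtered colimits commute with finite limits and with coproducts, so that the whole of $\Gamma E$ is assembled from operations preserving $\lambda$-filtered colimits; tracking the paths through a category of elements is the one genuinely fiddly step.
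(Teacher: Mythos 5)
Your argument is correct, but it is worth noting that the paper does not actually prove this lemma: it is recorded without proof as a consequence of theorem(3.3.1) of the companion paper \cite{Weber-MultitensorsMonadsEnrGraphs}, where the correspondence between properties of $E$ and of $\Gamma E$ is worked out systematically. Your self-contained proof uses exactly the toolkit that this paper sets up in section 2, so it is very much in the intended spirit. The direction ``$\Gamma E$ accessible $\implies$ $E$ accessible'' via the path graph $(X_1,\ldots,X_n)$ is clean: distributivity kills every summand of $\Gamma E(X_1,\ldots,X_n)(0,n)$ except the one indexed by $0,1,\ldots,n$ (since preserving coproducts in each variable includes preserving the empty one), the diagram of path graphs is identity-on-objects so remark(2.2.5) computes its colimit hom-wise, and evaluating at $(0,n)$ gives preservation in the $i$-th variable; combined with the paper's stated equivalence between accessibility of $E\colon MV\to V$ and variable-wise accessibility of the $E_n$, this direction is complete. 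For the converse, you correctly identify the one genuinely delicate point: the index set of the coproduct in the formula for $\Gamma E X(a,b)$ varies along the filtered diagram, so one must decompose $\colim_D \coprod_{i\in I_d}(-)$ over the category of elements of $I\colon D\to\Set$, whose connected components are $\lambda$-filtered and indexed by $\colim_D I_d$; together with the fact that finite products of object sets and the jointly $\lambda$-accessible $E_n$ both commute with $\lambda$-filtered colimits, this assembles the required isomorphism. This step is stated at the level of a plan rather than carried out, but the mechanism named is the right one and there is no gap in the approach.
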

Monads $T$ on $\ca GV$ over $\Set$ of the form $\Gamma E$ for some distributive multitensor $E$ on $V$ a category with coproducts, are those that are path-like and distributive in the sense that we now recall. These properties concern only the functor part of the given monad, and so for the sake of the proof of theorem(\ref{thm:lift-mult}) below, we shall formulate these definitions more generally than in \cite{Weber-MultitensorsMonadsEnrGraphs}, for functors over $\Set$ between categories of enriched graphs.

When the category $V$ has an initial object $\emptyset$, sequences $(X_1,...,X_n)$ of objects of $V$ can be regarded as $V$-graphs. The object set for the $V$-graph $(X_1,...,X_n)$ is $\{0,...,n\}$, for $1 \leq i \leq n$ the hom from $(i{-}1)$ to $i$ is $X_i$, and the other homs are $\emptyset$. An evocative informal picture of this $V$-graph is
\[ \xygraph{{0}="p0" [r] {1}="p1" [r] {...}="p2" [r(1.3)] {n{-}1}="p3" [r(1.2)] {n}="p4" "p0":"p1"^-{X_1}:"p2"^-{X_2}:"p3"^-{X_{n{-}1}}:"p4"^-{X_n}} \]
because it encourages one to imagine $X_i$ as being the object of ways of moving from $(i{-}1)$ to $i$ in this simple $V$-graph.

Given a functor $T:\ca GV \to \ca GW$ over $\Set$, one can define a functor $\overline{T}:MV \to W$ whose object map is given by
\[ \begin{array}{rcl} {\Tbar\limits_{1{\leq}i{\leq}n} X_i} & {=} & {T(X_1,...,X_n)(0,n).} \end{array} \]
Note that since $T$ is over $\Set$, the $W$-graph $T(X_1,...,X_n)$ also has object set $\{0,...,n\}$, and so the expression on the right hand side of the above equation makes sense as a hom of this $W$-graph. By definition $\overline{T}$ amounts to functors $\overline{T}_n:V^n \to W$ for each $n \in \N$.
\begin{definition}\label{def:distributive}
Let $V$ and $W$ be categories with coproducts. A functor $T:\ca GV \to \ca GW$ over $\Set$ is \emph{distributive} when for each $n \in \N$, $\overline{T}_n$ preserves coproducts in each variable. A monad $T$ on $\ca GV$ over $\Set$ is \emph{distributive} when its underlying endofunctor is distributive.
\end{definition}
\noindent As in \cite{Weber-MultitensorsMonadsEnrGraphs} section(4.2), this definition can be reexpressed in more elementary terms without mention of $\overline{T}$.

Intuitively, path-likeness says that the homs of the $W$-graphs $TX$ are in some sense some kind of abstract path object. Formally for a functor $T:\ca GV \to \ca GW$ over $\Set$, given a $V$-graph $X$ and sequence $x=(x_0,...,x_n)$ of objects of $X$, one can define the morphism
\[ \overline{x} : (X(x_0,x_1),X(x_1,x_2),...,X(x_{n-1},x_n)) \to X \]
whose object map is $i \mapsto x_i$, and whose hom map between $(i-1)$ and $i$ is the identity. For all such sequences $x$ one has
\[ T(\overline{x})_{0,n} : \Tbar\limits_i X(x_{i-1},x_i) \to TX(x_0,x_n) \]
and so taking all sequences $x$ starting at $a$ and finishing at $b$ one induces the canonical map
\[ \pi_{T,X,a,b} : \coprod\limits_{a=x_0,...,x_n=b} \Tbar\limits_i X(x_{i-1},x_i) \rightarrow TX(a,b) \]
in $W$.
\begin{definition}\label{def:path-like}
Let $V$ and $W$ be categories with coproducts. A functor $T:\ca GV \to \ca GW$ over $\Set$ is \emph{path-like} when for all $X \in \ca GV$ and $a,b \in X_0$, the maps $\pi_{T,X,a,b}$ are isomorphisms. A monad $T$ on $\ca GV$ over $\Set$ is \emph{path-like} when its underlying endofunctor is path-like.
\end{definition}
A basic result concerning these notions that will be useful below is
\begin{lemma}\label{lem:dpl}
Let $V$, $W$ and $Y$ be categories with coproducts and $R:V \to W$, $T:\ca GV \to \ca GW$ and $S:\ca GW \to \ca GY$ be functors.
\begin{enumerate}
\item If $R$ preserves coproducts then $\ca GR$ is distributive and path-like.\label{lemcase:dpl-unary}
\item If $S$ and $T$ are distributive and path-like, then so is $ST$.\label{lemcase:dpl-composition}
\end{enumerate}
\end{lemma}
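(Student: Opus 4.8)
The plan is to prove the two parts separately, in each case working directly from the definitions of \emph{distributive} and \emph{path-like} in terms of the associated functors $\overline{(-)}$ and the canonical comparison maps $\pi$. The key observation throughout is that all the functors in sight are over $\Set$, so object sets are rigidly preserved and everything reduces to a hom-by-hom analysis in $V$, $W$, or $Y$.

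For part (\ref{lemcase:dpl-unary}), I would first identify $\overline{\ca GR}$ explicitly. Since $\ca GR$ acts on a $V$-graph by applying $R$ to each hom and leaving the object set fixed, the $W$-graph $\ca GR(X_1,\dots,X_n)$ has hom $R(X_i)$ from $(i{-}1)$ to $i$ and $R(\emptyset)$ elsewhere. Because $R$ preserves coproducts it preserves the initial object, so $R(\emptyset)=\emptyset$ and the graph $\ca GR(X_1,\dots,X_n)$ is exactly the graph $(R(X_1),\dots,R(X_n))$. In particular its hom from $0$ to $n$ is $\emptyset$ for $n\geq 2$ and is $R(X_1)$ for $n=1$, so $\overline{\ca GR}_1 = R$ and $\overline{\ca GR}_n$ is the constant functor at $\emptyset$ for $n\neq 1$. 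Distributivity is then immediate: each $\overline{\ca GR}_n$ preserves coproducts in each variable, since $R$ does and the constant-at-$\emptyset$ functors trivially do. For path-likeness I would compute $\pi_{\ca GR,X,a,b}$: the indexing coproduct runs over all paths $a=x_0,\dots,x_n=b$, but $\overline{\ca GR}_n\bigl(X(x_0,x_1),\dots\bigr)$ vanishes unless $n=1$, so only the length-one paths (direct edges $a\to b$) contribute, giving $R\bigl(X(a,b)\bigr)$, which is exactly $\ca GR(X)(a,b)$; one checks the map is the identity, hence an isomorphism.

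For part (\ref{lemcase:dpl-composition}), the heart of the matter is to express $\overline{ST}$ and the comparison maps $\pi_{ST}$ in terms of the data for $S$ and $T$ individually, and this is where the real work lies. The natural strategy is to show that path-likeness of $T$ lets one rewrite $T$ applied to a path-graph as a coproduct over subdivisions of hom-objects, and then feed this into $S$, using distributivity of $S$ to pull the coproduct outside $\overline{S}$ and path-likeness of $S$ to reassemble. Concretely, I would fix a sequence $(X_1,\dots,X_n)$, regard it as the path-graph, and use that $T$ is over $\Set$ so $T(X_1,\dots,X_n)$ again has object set $\{0,\dots,n\}$; path-likeness of $T$ gives, for each pair $i\leq j$, an isomorphism $T(X_1,\dots,X_n)(i,j)\iso\coprod \overline{T}(\cdots)$ indexed over paths from $i$ to $j$. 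Applying $S$ and taking the $(0,n)$-hom yields $\overline{ST}_n(X_1,\dots,X_n)$, and the combination of $S$'s distributivity (to commute $\overline{S}$ past the internal coproducts) with $S$'s path-likeness (to recognise the outer structure) should exhibit $\overline{ST}_n$ as a coproduct of composites of $\overline{S}$ and $\overline{T}$; since coproducts of coproduct-preserving functors preserve coproducts, distributivity of $ST$ follows, and a parallel bookkeeping argument identifies $\pi_{ST,X,a,b}$ as a composite (or coproduct) of isomorphisms $\pi_{S}$ and $\pi_{T}$, giving path-likeness.

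The main obstacle I anticipate is the combinatorial bookkeeping in part (\ref{lemcase:dpl-composition}): one must check that the double indexing — an outer path for $S$ whose edges are themselves homs of $TX$, each of which path-likeness of $T$ re-expands as a coproduct over a finer path — matches up, under the associativity/compatibility of the various canonical maps, with the single indexing over composite paths in the definition of $\pi_{ST}$. The cleanest way to control this is probably to phrase everything as a compatible family of comparison maps and verify naturality of $\pi$ with respect to the path-inclusion morphisms $\overline{x}$, rather than manipulating the coproduct formula by hand; that $T$ and $S$ are over $\Set$ is what guarantees the object-level combinatorics is literally composition of paths and so makes the two indexings canonically isomorphic.
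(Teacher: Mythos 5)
Your proposal is correct and follows essentially the same route as the paper: in part (1) you identify $\overline{\ca GR}$ as $R$ in the unary slot and constant at $\emptyset$ elsewhere, and in part (2) you derive the same formula expressing $\overline{ST}_n(Z_1,\dots,Z_n)$ as a coproduct over subdivisions of composites of $\overline{S}$ and $\overline{T}$, from which both properties follow because a coproduct of coproducts is a coproduct. The paper states this formula without spelling out the bookkeeping you describe, so your extra care about matching the two indexings is a faithful elaboration rather than a different argument.
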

\begin{proof}
(\ref{lemcase:dpl-unary}): Since $R$ preserves the initial object one has $\ca GR(Z_1,...,Z_n) = (RZ_1,...,RZ_n)$ and so $\overline{\ca GR}:MV \to W$ sends sequences of length $n \neq 1$ to $\emptyset$, and its unary part is just $R$. Thus $\ca GR$ is distributive since $R$ preserves coproducts, and coproducts of copies of $\emptyset$ are initial. The summands of the domain of $\pi_{\ca GR,X,a,b}$ are initial unless $(x_0,...,x_n)$ is the sequence $(a,b)$, thus $\pi_{\ca GR,X,a,b}$ is clearly an isomorphism, and so $\ca GR$ is path-like.

(\ref{lemcase:dpl-composition}): Since $S$ and $T$ are path-like and distributive one has
\[ ST(Z_1,...,Z_n)(0,n) \iso \coprod\limits_{0=r_0{\leq}...{\leq}r_m=n} \Sbar\limits_{1{\leq}i{\leq}m} \,\,\, \Tbar\limits_{r_{i-1}{<}j{\leq}r_i} Z_j  \]
and so $ST$ is path-like and distributive since $S$ and $T$ are, and since a coproduct of coproducts is a coproduct. 
\end{proof}
The aforementioned characterisation of monads of the form $\Gamma E$ is then given by the following result.
\begin{theorem}\label{thm:GammaE-characterisation}
(\cite{Weber-MultitensorsMonadsEnrGraphs} theorem(4.2.4)).
For a category $V$ with coproducts, a monad $T$ on $\ca GV$ over $\Set$ is of the form $\Gamma E$ for a distributive multitensor $E$ iff $T$ is path-like and distributive, in which case one can take $E = \overline{T}$.
\end{theorem}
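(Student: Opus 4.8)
The plan is to show that the two constructions $E \mapsto \Gamma E$ and $T \mapsto \overline{T}$ are mutually inverse, the theorem being the object-level statement that they restrict to a bijection between distributive multitensors on $V$ and path-like distributive monads on $\ca GV$ over $\Set$. I would treat the two implications separately, but organise both around the single computation that identifies $\overline{T}$.

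For the ``only if'' direction, suppose $T = \Gamma E$ with $E$ distributive. First I would compute $\overline{\Gamma E}$: evaluating $\Gamma E(X_1,\ldots,X_n)(0,n)$ via the defining coproduct, every summand indexed by a sequence $0=y_0,\ldots,y_m=n$ that is not the full chain $0,1,\ldots,n$ contains a hom $(X_1,\ldots,X_n)(y_{j-1},y_j)=\emptyset$, and distributivity of $E$ (preservation of the empty coproduct in each variable) kills that summand; the surviving term is $E_n(X_1,\ldots,X_n)$. Hence $\overline{\Gamma E} = E$, which simultaneously pins down $E = \overline{T}$ as claimed. Distributivity of $\Gamma E$ is then immediate from that of $E$, and path-likeness is essentially tautological: the domain of $\pi_{\Gamma E, X, a, b}$ is, after the identification $\overline{\Gamma E}=E$, precisely the coproduct defining $\Gamma E X(a,b)$, and $\pi$ is the canonical comparison, which one checks to be the identity.

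For the ``if'' direction, let $T$ be path-like and distributive and set $E := \overline{T}$; distributivity of $E$ as a multitensor is the definition of $T$ being distributive. The substantive work is to equip $E$ with unit and substitution and then to produce a monad isomorphism $T \iso \Gamma E$. The unit $u_X : X \to E_1 X$ is extracted from the monad unit $\eta$ by evaluating $\eta_{(X)}$ on the hom $0 \to 1$ of the length-one graph $(X)$ (legitimate since $T$ is over $\Set$, so $\eta$ is identity-on-objects). For the substitution I would use Lemma \ref{lem:dpl}(2): applying it to $T \circ T$ gives $\overline{T^2}_n \iso \coprod_{0=r_0\leq\ldots\leq r_m=n} \overline{T}_{1\leq i\leq m}\,\overline{T}_{r_{i-1}<j\leq r_i}\,Z_j$, so a doubly-indexed family $X_{ij}$ determines a single summand $\opE_i\opE_j X_{ij}$ of $\overline{T^2}$; precomposing its coproduct injection with $\overline{\mu}$ (where $\mu$ is the monad multiplication) and landing in $\overline{T}_n = \opE_{ij} X_{ij}$ produces $\sigma_{X_{ij}}$. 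The monad isomorphism is then $\phi_X : \Gamma E X \to TX$, the identity on objects and $\pi_{T,X,a,b}$ on homs, which is invertible exactly by path-likeness.

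What remains, and where I expect the real labour to lie, is the verification of the multitensor unit and associativity axioms and the compatibility of $\phi$ with the monad structures. These all reduce to the monad laws for $T$, but unwinding them requires the iterated form of Lemma \ref{lem:dpl}(2): the unit axioms come from $\mu\circ T\eta = 1 = \mu\circ\eta T$ restricted to the relevant summands, while associativity of $\sigma$ is the restriction of monad associativity $\mu\circ T\mu = \mu\circ\mu T$ to summands of $\overline{T^3}$, whose decomposition is obtained by applying Lemma \ref{lem:dpl}(2) twice. The bookkeeping here --- matching nested partitions of $\{1,\ldots,n\}$ to the two bracketings and repeatedly invoking that a coproduct of coproducts is a coproduct --- is the main obstacle; once the summand-wise identifications are set up carefully, each axiom is a direct consequence of the corresponding monad equation, and naturality of $\phi$ follows from naturality of $T$ together with the definition of the maps $\overline{x}$.
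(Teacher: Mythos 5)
The paper does not prove this statement: it is quoted verbatim as theorem~(4.2.4) of \cite{Weber-MultitensorsMonadsEnrGraphs}, so there is no in-paper argument to compare against. Your sketch is correct and follows the standard line of the cited source --- the computation $\overline{\Gamma E}=E$ via distributivity killing the non-chain summands, the extraction of $u$ and $\sigma$ from $\eta$ and $\mu$ using the decomposition of $\overline{T^2}$ from lemma~(\ref{lem:dpl}), and the identification $\Gamma\overline{T}\iso T$ via the maps $\pi_{T,X,a,b}$ --- with the remaining work being exactly the axiom bookkeeping you identify.
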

So far we have discussed two important constructions, $\Gamma$ which produces a monad from a multitensor, and $\overline{(-)}$ which produces a multitensor from a monad. We now recall a third construction $(-)^{\times}$ which also produces a multitensor from a monad. If $V$ has finite products and $T$ is any monad on $V$, then one can define a multitensor $T^{\times}$ on $V$ as follows
\[ \begin{array}{rcl} {\opTx\limits_{1{\leq}i{\leq}n}X_i} & {=} & {\prod\limits_{i=1}^n TX_i.} \end{array} \]
A category enriched in $T^{\times}$ is exactly a category enriched in $V^T$ using the cartesian product. As explained in section(5.2) of \cite{Weber-MultitensorsMonadsEnrGraphs}, this is an instance of a general phenomenon of a distributive law of a multitensor over a monad, in this case witnessed by the fact that any monad $T$ will be opmonoidal with respect to the cartesian product on $V$. If moreover $V$ has coproducts and $V$'s products distribute over them, so that cartesian product $\prod$ on $V$ is a distributive multitensor, and if $T$ preserves coproducts, then $T^{\times}$ is a distributive multitensor. The 2-functoriality of $\Gamma$ and the formal theory of monads \cite{Street-FTM} then ensures that at the level of monads one has a distributive law $\ca G(T)\Gamma(\prod) \to \Gamma(\prod)\ca G(T)$ between the monads $\Gamma(\prod)$ and $\ca G(T)$ on $\ca GV$.

Fundamental to the globular approach to higher category are the monads $\ca T_{\leq n}$, defined for $n \in \N$, on the category $\ca G^n(\Set)$ of $n$-globular sets, whose algebras are strict $n$-categories. Higher categorical structures in dimension $n$ in this approach are by definition algebras of $\ca T_{\leq n}$-operads, and a $\ca T_{\leq n}$-operad is by definition a monad $A$ on $\ca G^n(\Set)$ equipped with a cartesian monad morphism{\footnotemark{\footnotetext{That is, a natural transformation between underlying endofunctors, whose naturality squares are pullbacks, and which is compatible with the monad structures.}}} $\alpha:A \to \ca T_{\leq n}$. In terms of the theory described so far, the monads $\ca T_{\leq n}$ have a simple inductive description
\begin{itemize}
\item  $\ca T_{{\leq}0}$ is the identity monad on $\Set$.
\item  Given the monad $\ca T_{{\leq}n}$ on $\ca G^n\Set$, define the monad
$\ca T_{{\leq}n+1} = \Gamma \ca T^{\times}_{{\leq}n}$
on $\ca G^{n+1}\Set$.
\end{itemize}
The fact that $\ca T_{\leq n}$ algebras are strict $n$-categories is immediate from this definition, our understanding of what the constructions $\Gamma$ and $(-)^{\times}$ correspond to at the level of algebras and enriched categories, and the definition of strict $n$-category via iterated enrichment using cartesian product. Moreover, an inductive definition of the $\ca T_{\leq n}$ via an iterative system of distributive laws \cite{Cheng-IteratedDist}, is also an immediate consequence of this point of view.

That one has this good notion of $\ca T_{\leq n}$-operad expressable so generally in terms of cartesian monad morphisms, so that it fits within the framework of Burroni \cite{Burroni-TCats}, Hermida \cite{Hermida-RepresentableMulticategories} and Leinster \cite{Leinster-HDA-book}, is due to the fact that $\ca T_{\leq n}$ is a cartesian monad. That is, its functor part preserves pullbacks and the naturality squares for its unit and multiplication are pullback squares. The underlying endofunctor $\ca T_{\leq n}$ satisfies a stronger condition than pullback preservation, namely, it is a \emph{local right adjoint}, that is to say, for any $X \in \ca G^n(\Set)$, the functor
\[ (\ca T_{\leq n})_X : \ca G^n(\Set)/X \longrightarrow \ca G^n(\Set)/\ca T_{\leq n} X \]
obtained by applying $\ca T_{\leq n}$ to arrows into $X$, is a right adjoint. A cartesian monad whose functor part satisfies this stronger property is called a \emph{local right adjoint monad}{\footnotemark{\footnotetext{In \cite{BergMellWeber-MonadsArities} the terminology \emph{strongly cartesian monad} is used.}}} in this article, and this notion is important because for such monads one can define the nerve of an algebra in a useful way. See \cite{Berger-CellularNerve}, \cite{Weber-Fam2fun}, \cite{Mellies-Segal} and \cite{BergMellWeber-MonadsArities} for further discussion.

All these pleasant properties enjoyed by the monads $\ca T_{\leq n}$ can be understood in terms the compatibility of these properties with the constructions $\Gamma$ and $(-)^{\times}$. See \cite{Weber-MultitensorsMonadsEnrGraphs}, especially theorem(3.3.1) and example(5.2.4), for more details.

A $\ca T_{\leq n}$-operad over $\Set$ is a $\ca T_{\leq n}$-operad $\alpha:A \to \ca T_{\leq n}$ such that the monad $A$ is over $\Set$ and $\alpha$'s components are identities on objects. We denote by $\NOp {\ca T_{\leq n}}$ the category of $\ca T_{\leq n}$-operads over $\Set$. There is also a notion of $E$-multitensor for a given cartesian multitensor $E$ on a category $V$, which consists of another multitensor $F$ on $V$ equipped with a cartesian morphism of multitensors $\phi:F \to E$. We denote by $\Mult E$ the category of $E$-multitensors. Thanks to the functoriality of $\Gamma$ and its compatibility with the categorical properties participating in these definitions one has
\begin{theorem}\label{thm:Operad-Multitensor-Equiv}
The constructions $\Gamma$ and $\overline{(-)}$ give an equivalence of categories
\[ \Mult {\ca T_{\leq n}^{\times}} \catequiv \NOp {\ca T_{\leq n{+}1}}. \]
\end{theorem}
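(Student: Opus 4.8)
The plan is to realise the asserted equivalence as a \emph{slicing} of the object-level correspondence recorded in theorem(\ref{thm:GammaE-characterisation}), restricted to cartesian objects. Throughout write $V = \ca G^n(\Set)$, so that $\ca GV = \ca G^{n{+}1}(\Set)$, and recall from the inductive definition that $\ca T_{\leq n{+}1} = \Gamma\ca T_{\leq n}^{\times}$; dually, since $\ca T_{\leq n{+}1}$ is path-like and distributive, theorem(\ref{thm:GammaE-characterisation}) gives $\overline{\ca T_{\leq n{+}1}} \iso \ca T_{\leq n}^{\times}$. Note also that $\ca T_{\leq n}^{\times}$ is a distributive multitensor: it is cartesian by construction, and it preserves coproducts in each variable because $\ca T_{\leq n}$ does and because products distribute over coproducts in the presheaf topos $V$.

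First I would upgrade theorem(\ref{thm:GammaE-characterisation}) to an equivalence of categories between distributive multitensors on $V$ (with multitensor morphisms) and path-like distributive monads on $\ca GV$ over $\Set$ (with monad morphisms over $\Set$). On objects this is exactly theorem(\ref{thm:GammaE-characterisation}), together with the canonical isomorphisms $\overline{\Gamma E} \iso E$ and $\Gamma\overline{T} \iso T$. The content on morphisms is that $\Gamma$ is fully faithful on the relevant hom-sets: given a monad morphism $\theta : \Gamma E \to \Gamma F$ over $\Set$, its components are identities on objects, so naturality of $\theta$ along the path inclusions $\overline{x}$ of section(\ref{sec:MultMndReview}), together with path-likeness of $\Gamma E$ and $\Gamma F$, forces $\theta$ to act summand-wise in the defining coproduct, whence $\theta = \Gamma\overline{\theta}$ for the evident bar-construction $\overline{\theta} : E \to F$; faithfulness is the observation that $\overline{\Gamma\psi} = \psi$. (Local full faithfulness of $\Gamma$ handles $2$-cells, which are not needed for the present $1$-categorical statement.)

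With this ambient equivalence in hand, slicing over the corresponding objects $\ca T_{\leq n}^{\times}$ and $\ca T_{\leq n{+}1} = \Gamma\ca T_{\leq n}^{\times}$ immediately produces an equivalence of slice categories. It then remains only to check that the two full subcategories cut out by cartesianness are matched, and that both $\Mult{\ca T_{\leq n}^{\times}}$ and $\NOp{\ca T_{\leq n{+}1}}$ genuinely sit inside these slices. For the latter: any multitensor $F$ admitting a cartesian map $\phi : F \to \ca T_{\leq n}^{\times}$ is automatically distributive, and any monad $A$ over $\Set$ admitting a cartesian map $\alpha : A \to \ca T_{\leq n{+}1}$ with identity-on-objects components is automatically path-like and distributive --- in each case because a cartesian transformation into a coproduct-preserving (resp.\ path-like, resp.\ distributive) functor has coproduct-preserving (resp.\ path-like, resp.\ distributive) domain, using that coproducts in the globular presheaf toposes are universal. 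For the former: $\Gamma$ preserves cartesianness, since by universality of coproducts its defining coproduct formula turns the summand-wise pullbacks expressing cartesianness of $\phi$ into pullbacks expressing cartesianness of $\Gamma\phi$; conversely $\overline{(-)}$ preserves cartesianness, since limits of enriched graphs over $\Set$ are computed one hom at a time and $\overline{T}_n$ is by definition such a hom, so a cartesian $\alpha$ yields a cartesian $\overline{\alpha}$.

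I expect the main obstacle to be precisely this interaction between $\Gamma$, $\overline{(-)}$ and coproducts: establishing, uniformly, that cartesianness and distributivity pass back and forth along the two constructions. Each of these verifications rests on the good (lextensive, van Kampen) behaviour of coproducts in $\ca G^n(\Set)$ and $\ca G^{n{+}1}(\Set)$ --- the same feature underlying lemma(\ref{lem:dpl}) --- so I would isolate a single lemma to the effect that, in these toposes, a cartesian natural transformation over $\Set$ reflects and preserves both coproduct-preservation and path-likeness, and then apply it repeatedly. The reconstruction $\theta = \Gamma\overline{\theta}$ via path-likeness is the other delicate point, but it is of the same flavour and follows once the summand-wise description is in place.
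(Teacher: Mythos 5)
The paper offers no proof of this theorem, deferring to corollary (7.10) of \cite{BataninWeber-EnHop} and theorem (6.1.1) of \cite{Weber-MultitensorsMonadsEnrGraphs}; your reconstruction --- full faithfulness of $\Gamma$ from distributive multitensors onto path-like distributive monads over $\Set$ via the path-inclusion/summand argument, together with the transfer of cartesianness, distributivity and path-likeness across cartesian transformations using extensivity of coproducts in the globular presheaf toposes --- is exactly the strategy of those cited proofs. I see no gap: the two points you single out as delicate are precisely the substance of the cited results, and your arguments for them are sound.
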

\noindent This result first appeared as corollary(7.10) of \cite{BataninWeber-EnHop}, and was subsequently generalised in \cite{Weber-MultitensorsMonadsEnrGraphs} theorem(6.1.1) with $\ca T_{\leq n}^{\times}$ replaced by a general cartesian multitensor $E$ on a lextensive category $V$.

\section{Lifting theorem}
\label{sec:lifting-theorem}

\subsection{Overview}
\label{ssec:lifting-theorem-discussion}
In \cite{Crans-ATensorForGrayCats} the so-called ``Crans tensor product'' of Gray categories was constructed by hand, and categories enriched in this tensor product were called ``4-teisi'' and were unpacked in \cite{Crans-BraidingsSyllepsesSymmetries}, these being Crans' candidate notion of semi-strict 4-category. From the perspective of these papers it is not at all clear that one can proceed in the reverse order, first constructing a $\ca T_{\leq 4}$-operad for 4-teisi, and then obtaining the tensor product from this. By ordering the discussion in this way one makes a direct formal connection between the combinatorial work of Crans and the theory of higher operads. Even given an intuition that one can begin with the operad and then obtain the tensor product, it is very far from clear from the way that things are described in \cite{Crans-ATensorForGrayCats} and \cite{Crans-BraidingsSyllepsesSymmetries}, how one would express such an intuition formally without drowning in the complexity of the combinatorial notions involved.

The lifting theorem applied to this example gives a formal expression of this intuition in a way independent of any of these combinatorial details. It does not however say anything about how the $\ca T_{\leq 4}$-operad for 4-teisi is constructed. Put simply, the $\ca T_{\leq 4}$-operad for 4-teisi is the input, and the Crans tensor product is the output for this application of the lifting theorem. 

This section is organised as follows. In section(\ref{ssec:lifting-theorem-proof}) we formulate and prove the lifting theorem as an answer to question(\ref{q:lift}). Then in section(\ref{ssec:GrayCrans-key-examples}) we discuss how this result brings the Gray and Crans tensor products, of 2-categories and Gray categories respectively, into our framework. Section(\ref{ssec:convolution-via-lifting}) exhibits Day convolution as arising via an application of the lifting theorem. Finally in section(\ref{ssec:functoriality-lifting}) we exhibit the process of lifting a multitensor as a right 2-adjoint to an appropriate inclusion of functor operads among multitensors.

\subsection{The theorem and its proof}
\label{ssec:lifting-theorem-proof}
The idea which enables us to answer question(\ref{q:lift}) is the following. Given a distributive multitensor $E$ on $V$ one can consider also the multitensor $\widetilde{E_1}$ whose unary part is also $E_1$, but whose non-unary parts are all constant at the initial object. This is clearly a sub-multitensor of $E$, also distributive, and moreover as we shall see one has $\Enrich{\widetilde{E_1}} \iso \ca G(V^{E_1})$ over $\ca GV$. Thus from the inclusion $\widetilde{E_1} \hookrightarrow E$ one induces the forgetful functor $U$ fitting in the commutative triangle
\[ \xygraph{{\ca G(V^{E_1})}="l" [r(2)] {\Enrich E}="r" [dl] {\ca GV}="b" "l":@{<-}"r"^-{U}:"b"^-{U^E}:@{<-}"l"^-{\ca G(U^{E_1})}} \]
For sufficiently nice $V$ and $E$ this forgetful functor has a left adjoint. The category of algebras of the induced monad $T$ will be $\Enrich E$ since $U$ is monadic. Thus problem is reduced to that of establishing that this monad $T$ arises from a multitensor on $V^{E_1}$. By theorem(\ref{thm:GammaE-characterisation}) this amounts to showing that $T$ is path-like and distributive.

In order to implement this strategy, we must understand something about the explicit description of the left adjoint $\ca G(V^{E_1}) \to \Enrich E$, and this understanding is a basic part of monad theory that we now briefly recall. Suppose that $(M,\eta^M,\mu^M)$ and $(S,\eta^S,\mu^S)$ are monads on a category $\ca E$, and $\phi:M{\rightarrow}S$ is a morphism of monads. Then $\phi$ induces the forgetful functor $\phi^*:{\ca E}^S \rightarrow {\ca E}^M$ and we are interested in computing the left adjoint $\phi_!$ to $\phi^*$. By the Dubuc adjoint triangle theorem \cite{Dubuc-KanExtensions}, one may compute the value of $\phi_!$ at an $M$-algebra $(X,x:MX{\rightarrow}X)$ as a reflexive coequaliser
\begin{equation}\label{eq:algebraic-left-adjoint}
\xygraph{!{0;(3,0):}
{(SMX,\mu^S_{MX})}="l" [r] {(SX,\mu^S_X)}="m" [r] {\phi_!(X,x)}="r"
"l":@<2ex>"m"^-{\mu^S_XS(\phi_X)}:"l"|-{S\eta^M_X}:@<-2ex>"m"_-{Sx}:"r"^-{q_{(X,x)}}}
\end{equation}
in ${\ca E}^S$, when this coequaliser exists. Writing $T$ for the monad induced by $\phi_! \ladj \phi^*$, the components of the unit $\eta^T$ of $T$ are given by
\[ X \xrightarrow{\eta^S_X} SX \xrightarrow{q_{(X,x)}} \phi_!(X,x). \]

The coequaliser (\ref{eq:algebraic-left-adjoint}) is taken in ${\ca E}^S$, and we will need to have some understanding of how this coequaliser is computed in terms of colimits in $\ca E$. So we suppose that $\ca E$ has filtered colimits and coequalisers, and that $S$ is $\lambda$-accessible for some regular cardinal $\lambda$. Then the underlying object $U^S\phi_!(X,x)$ in $\ca E$ of $\phi_!(X,x)$ is constructed in the following way. We shall construct morphisms
\[ \begin{array}{lccr} {v_{n,X,x} : SQ_n(X,x) \rightarrow Q_{n{+}1}(X,x)} &&& {q_{n,X,x}:Q_n(X,x) \to Q_{n{+}1}(X,x)} \end{array} \]
\[ \begin{array}{c} {q_{{<}n,X,x}:SX \to Q_n(X,x)}  \end{array} \]
starting with $Q_0(X,x)=SX$ by transfinite induction on $n$.
\\ \\
{\bf Initial step}. Define $q_{{<}0}$ to be the identity, $q_0$ to be the coequaliser of $\mu^S(S\phi)$ and $Sx$, $q_{<{1}}=q_0$ and $v_0=q_0b$. Note also that $q_0=v_0\eta^S$.
\\ \\
{\bf Inductive step}. Assuming that $v_n$, $q_n$ and $q_{<{n{+}1}}$ are given, we define $v_{n{+}1}$ to be the coequaliser of $S(q_n)(\mu^SQ_n)$ and $Sv_n$, $q_{n{+}1}=v_{n{+}1}(\eta^SQ_{n{+}1})$ and $q_{<{n{+}2}}=q_{n{+}1}q_{<{n{+}1}}$.
\\ \\
{\bf Limit step}. Define $Q_n(X,x)$ as the colimit of the sequence given by the objects $Q_m(X,x)$ and morphisms $q_m$ for $m < n$, and $q_{<{n}}$ for the component of the universal cocone at $m=0$.
\[ \xygraph{!{0;(3,0):(0,.333)::} {\colim_{m{<}n} S^2Q_m}="tl" [r] {\colim_{m{<}n} SQ_m}="tm" [r] {\colim_{m{<}n} Q_m}="tr" [d] {Q_n}="br" [l] {SQ_n}="bm" [l] {S^2Q_n}="bl" "tl":@<1ex>"tm"^-{\mu_{<{n}}}:@<1ex>"tr"^-{v_{<{n}}} "tl":@<-1ex>"tm"_-{(Sv)_{<{n}}}:@<-1ex>@{<-}"tr"_-{\eta_{<{n}}} "bl":"bm"_-{\mu}:@{<-}"br"_-{\eta} "tl":"bl"_{o_{n,2}} "tm":"bm"^{o_{n,1}} "tr":@{=}"br"} \]
We write $o_{n,1}$ and $o_{n,2}$ for the obstruction maps measuring the extent to which $S$ and $S^2$ preserve the colimit defining $Q_n(X,x)$. We write $\mu^S_{<{n}}$, $(Sv)_{<{n}}$, $v_{<{n}}$ and $\eta^S_{<{n}}$ for the maps induced by the $\mu^SQ_m$, $Sv_m$, $v_m$ and $\eta^SQ_m$ for $m < n$ respectively. Define $v_n$ as the coequaliser of $o_{n,1}\mu_{<{n}}$ and $o_{n,1}(Sv)_{<{n}}$, $q_n=v_n({\eta^S}Q_n)$ and $q_{<{n{+}1}}=q_nq_{<{n}}$.
\\ \\
Then since $S$ preserves $\lambda$-filtered colimits, this sequence stabilises in the sense that for any ordinal $n$ such that $|n| \geq \lambda$, $q_{n,X,x}$ is an isomorphism. Thus for any such $n$ one may take
\[ \begin{array}{lccr} {\phi_!(X,x)=(Q_n(X,x),q_n^{-1}v_n)} &&& {q_{<n} : (SX,\mu_X) \to (Q_n(X,x),q_n^{-1}v_n)} \end{array} \]
as an explicit definition of $\phi_!(X,x)$ and the associated coequalising map in $V^S$. The proof of this is essentially standard -- see theorem(3.9) of \cite{BarrWells-TTT} for example, and so we omit the details.

In fact all that we require of the above details is that the transfinite construction involves only \emph{connected} colimits. Moreover in the context that we shall soon consider, these will be connected colimits of diagrams of $V$-graphs which live wholly within a single fibre of $(-)_0:\ca GV \to \Set$. As was explained in section(\ref{sec:MultMndReview}) and in remark(2.2.5) of \cite{Weber-MultitensorsMonadsEnrGraphs}, such colimits in $\ca GV$ are straight forward.
\begin{lemma}\label{lem:concol-pathlike}
Let $V$ be a category with coproducts, $W$ be a cocomplete category, $J$ be a small connected category and
\[ F : J \rightarrow [\ca GV,\ca GW] \]
be a functor. Suppose that $F$ sends objects and arrows of $J$ to functors and natural transformations over $\Set$.
\begin{itemize}
\item[(1)]  Then the colimit $K:\ca GV{\rightarrow}\ca GW$ of $F$ may be chosen to be over $\Set$.\label{cpl1}
\end{itemize}
Given such a choice of $K$:
\begin{itemize}
\item[(2)]  If $Fj$ is path-like for all $j \in J$, then $K$ is also path-like.\label{cpl2}
\item[(3)]  If $Fj$ is distributive for all $j \in J$, then $K$ is also distributive.\label{clp3}
\end{itemize}
\end{lemma}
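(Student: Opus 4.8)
The plan is to compute $K$ as a pointwise colimit and to exploit the fact, recalled in section(\ref{sec:MultMndReview}) (remark(2.2.5) of \cite{Weber-MultitensorsMonadsEnrGraphs}), that a connected colimit of $W$-graphs whose transition maps are identities on objects is computed one hom at a time with the object set held fixed. Since $W$, and hence $\ca GW$, is cocomplete, the colimit $K$ of $F$ exists in $[\ca GV,\ca GW]$ and is computed pointwise: $KX = \colim_j (Fj)X$ for each $V$-graph $X$. As $F$ sends objects of $J$ to functors over $\Set$ and arrows to natural transformations over $\Set$, for fixed $X$ the diagram $j \mapsto (Fj)X$ consists of $W$-graphs all with object set $X_0$ and all transition maps identities on objects; since $J$ is connected this colimit may be chosen with object set $X_0$ and computed hom-wise,
\[ KX(a,b) = \colim_j (Fj)X(a,b), \]
the right-hand colimit taken in $W$. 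Naturality in $X$ then forces the object map of $Kf$ to be that of $f$, because this holds for every $(Fj)f$; this proves (1) and simultaneously exhibits $\overline{K}_n = \colim_j \overline{Fj}_n$ as a pointwise colimit of functors $V^n \to W$. Connectedness of $J$ is precisely what keeps $K$ over $\Set$, and is the one hypothesis that cannot be dropped.

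For (2), I would first observe that $\pi_{T,X,a,b}$ is natural in the functor $T$: a natural transformation $T \to T'$ over $\Set$ yields a square comparing $\pi_{T,X,a,b}$ with $\pi_{T',X,a,b}$, which commutes because the transformation is natural with respect to the graph morphisms $\overline{x}$ used to define $\pi$. Applying this to the colimiting cocone $Fj \to K$ (whose components are over $\Set$), invoking the hom-wise description from (1), and commuting the coproduct $\coprod_{a=x_0,...,x_n=b}$ --- a colimit over an index set independent of $j$ --- past the connected colimit $\colim_j$, one identifies $\pi_{K,X,a,b}$ with $\colim_j \pi_{Fj,X,a,b}$. Each $\pi_{Fj,X,a,b}$ is an isomorphism by path-likeness of $Fj$, and a colimit of isomorphisms is an isomorphism, so $\pi_{K,X,a,b}$ is an isomorphism and $K$ is path-like.

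For (3), distributivity requires each $\overline{K}_n$ to preserve coproducts in every variable. Using $\overline{K}_n = \colim_j \overline{Fj}_n$, I would fix one slot and a family $(Y_\lambda)$: distributivity of each $Fj$ gives $\overline{Fj}_n(\ldots,\coprod_\lambda Y_\lambda,\ldots) \iso \coprod_\lambda \overline{Fj}_n(\ldots,Y_\lambda,\ldots)$, and the same interchange of a fixed-index coproduct with $\colim_j$ then yields $\overline{K}_n(\ldots,\coprod_\lambda Y_\lambda,\ldots) \iso \coprod_\lambda \overline{K}_n(\ldots,Y_\lambda,\ldots)$ via the canonical comparison. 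Hence $\overline{K}_n$ preserves coproducts in each variable and $K$ is distributive.

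The main point needing care is the two interchange steps in (2) and (3): commuting a coproduct (indexed by sequences, respectively by $\lambda$) past the $J$-indexed colimit. Each is an instance of colimits commuting with colimits, but to conclude I must check that under these interchanges the comparison maps $\pi$, respectively the distributivity maps, genuinely coincide with $\colim_j$ of their $Fj$-counterparts; this is exactly where the naturality of $\pi$ in its functor argument and the hom-wise computation of $K$ from (1) are used.
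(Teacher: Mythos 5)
Your proposal is correct and follows essentially the same route as the paper: compute the colimit pointwise, use the connectedness of $J$ together with the over-$\Set$ hypothesis to compute it hom-wise with fixed object set (this is (1)), and then deduce (2) and (3) from the fact that colimits commute with colimits. The paper's own proof is terser — it simply notes that path-likeness and distributivity are conditions expressed by colimits at the level of homs and invokes the interchange of colimits — whereas you have usefully spelled out the naturality of $\pi_{T,X,a,b}$ in $T$ and the identification $\pi_{K,X,a,b} \iso \colim_j \pi_{Fj,X,a,b}$, but the underlying argument is the same.
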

\begin{proof}
Colimits in $[\ca GV,\ca GW]$ are computed componentwise from colimits in $\ca GW$ and so for $X \in \ca GV$ we must describe a universal cocone with components
\[ \kappa_{X,j} : Fj(X) \rightarrow KX. \]
By remark(2.2.5) of \cite{Weber-MultitensorsMonadsEnrGraphs} we may demand that the $\kappa_{X,j}$ are identities on objects, and then compute the hom of the colimit between $a,b \in X_0$ by taking a colimit cocone
\[ \{\kappa_{X,j}\}_{a,b} : Fj(X)(a,b) \rightarrow KX(a,b) \]
in $W$. This establishes (1). Since the properties of path-likeness and distributivity involve only colimits at the level of the homs as does the construction of $K$ just given, (2) and (3) follow immediately since colimits commute with colimits in general.
\end{proof}
Recall the structure-semantics result of Lawvere, which says that for any category $\ca E$, the canonical functor
\[ \begin{array}{lccccc} {\tn{Mnd}(\ca E)^{\op} \to \CAT/\ca E} &&& {T} & {\mapsto} & {U:\ca E^T \to \ca E} \end{array} \]
with object map indicated is fully faithful (see \cite{Street-FTM} for a proof). An important consequence of this is that for monads $S$ and $T$ on $\ca E$, an isomorphism $\ca E^T \iso \ca E^S$ over $\ca E$ is induced by a unique isomorphism $S \iso T$ of monads. We now have all the pieces we need to implement our strategy. First, in the following lemma, we give the result we need to recognise the induced monad on $\ca G(V^{E_1})$ as arising from a multitensor.
\begin{lemma}\label{lem:mnd-lift-mult}
Let $\lambda$ be a regular cardinal. Suppose that $V$ is a cocomplete category, $R$ is a coproduct preserving monad on $V$, $S$ is a $\lambda$-accessible monad on $\ca GV$ over $\Set$, and $\phi:\ca GR{\rightarrow}S$ is a monad morphism over $\Set$. Denote by $T$ the monad on $\ca G(V^R)$ induced by $\phi_! \ladj \phi^*$.
\begin{itemize}
\item[(1)]  One may choose $\phi_!$ so that $T$ is over $\Set$.
\end{itemize}
Given such a choice of $\phi_!$:
\begin{itemize}
\item[(2)]  If $S$ is distributive and path-like then so is $T$.
\item[(3)]  If $R$ is $\lambda$-accessible then so is $T$.
\end{itemize}
\end{lemma}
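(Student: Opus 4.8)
The plan is to reduce every assertion about the monad $T=\phi^*\phi_!$ on $\ca G(V^R)$ to the corresponding assertion about the underlying-$V$-graph functor $P:=U^S\phi_!:\ca G(V^R)\to\ca GV$, where $U^S:(\ca GV)^S\to\ca GV$ is the forgetful functor, and then to read off the properties of $P$ from the explicit transfinite construction of $\phi_!$ recalled above. The tools are Lemma~\ref{lem:dpl}, Lemma~\ref{lem:concol-pathlike}, and the observation that, since $R$ preserves coproducts, the forgetful functor $U^R:V^R\to V$ creates coproducts (so that $V^R$ has coproducts) and, being monadic, reflects isomorphisms. Note at the outset that $\ca G(U^R)\comp\phi^*=U^S$, hence $\ca G(U^R)\comp T=P$.

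For (1), I would note that $\ca GR$ and $S$ are over $\Set$ and $\phi$ is a monad morphism over $\Set$, so every arrow appearing in the coequaliser~(\ref{eq:algebraic-left-adjoint}) and in the ensuing construction of the $Q_n(X,x)$ is the identity on objects, and every colimit occurring is a connected colimit of $V$-graphs lying in a single fibre of $(-)_0:\ca GV\to\Set$. By Lemma~\ref{lem:concol-pathlike} (equivalently remark(2.2.5) of \cite{Weber-MultitensorsMonadsEnrGraphs}) these colimits may be chosen over $\Set$; then each $Q_n(X,x)$ carries the object set of $X$, each structural map is the identity on objects, and so $\phi_!$ together with its coequalising cocone is over $\Set$. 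Since $\phi^*$ commutes over $\Set$ with the forgetful functors down to $\ca GV$, the unit $\eta^T=q\comp\eta^S$ and the multiplication of $T$ are identities on objects, so $T$ is over $\Set$. I fix this choice of $\phi_!$ henceforth.

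For (2), the reduction to $P$ goes as follows. Because $\phi^*$ leaves underlying $V$-graphs unchanged, $U^R\overline{T}_n=\overline{P}_n$ as functors $(V^R)^n\to V$ and $U^R\pi_{T,X,a,b}=\pi_{P,X,a,b}$; since $U^R$ preserves coproducts and reflects isomorphisms, $T$ is distributive (resp.\ path-like) whenever $P$ is. It therefore suffices to show $P$ is distributive and path-like, and I would prove this by transfinite induction on the $Q_n$, now regarded as functors $\ca G(V^R)\to\ca GV$ over $\Set$. The base case $Q_0=S\comp\ca G(U^R)$ is handled by Lemma~\ref{lem:dpl}: $\ca G(U^R)$ is distributive and path-like by part~(\ref{lemcase:dpl-unary}) as $U^R$ preserves coproducts, and precomposing the distributive path-like $S$ with it preserves both properties by part~(\ref{lemcase:dpl-composition}). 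At each successor and limit stage $Q_n$ is produced as a connected colimit---a reflexive coequaliser, or a filtered colimit along the ordinal---of a diagram whose entries are functors $S^k\comp Q_m$ with $m<n$; these are distributive and path-like by the inductive hypothesis and Lemma~\ref{lem:dpl}(\ref{lemcase:dpl-composition}), and the colimit inherits both properties by parts (2) and (3) of Lemma~\ref{lem:concol-pathlike}, which apply because the diagrams lie over $\Set$ by the choice fixed in (1) and $\ca GV$ is cocomplete. As $S$ is $\lambda$-accessible the sequence stabilises, so $P$ coincides with $Q_n$ for $|n|\geq\lambda$ and is distributive and path-like. The main obstacle is precisely this step: one must match the bookkeeping of the maps $v_n,q_n,q_{{<}n}$ to the hypotheses of the two lemmas and confirm that each colimit in the construction is genuinely connected and fibrewise over $\Set$; once this is done the induction is routine.

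For (3), I would argue purely formally. The functor $\phi_!$ is a left adjoint and so preserves all colimits, while $U^S$ preserves $\lambda$-filtered colimits because $S$ is $\lambda$-accessible; hence $P=U^S\phi_!$ is $\lambda$-accessible. Since $R$ is $\lambda$-accessible, $\ca GR$ is $\lambda$-accessible (via the construction of colimits in $\ca GV$, \cite{Weber-MultitensorsMonadsEnrGraphs} proposition(2.2.3)), so the monadic functor $\ca G(U^R):\ca G(V^R)\to\ca GV$ creates $\lambda$-filtered colimits. As $\ca G(U^R)\comp T=P$ preserves $\lambda$-filtered colimits and $\ca G(U^R)$ creates and reflects them, $T$ preserves $\lambda$-filtered colimits, i.e.\ $T$ is $\lambda$-accessible.
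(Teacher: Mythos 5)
Your proposal is correct, and for parts (1) and (2) it follows essentially the same route as the paper: reduce everything about $T$ to the composite $P=U^S\phi_!=\ca G(U^R)T$ using that $U^R$ creates coproducts and reflects isomorphisms, then run a transfinite induction over the explicit construction of $\phi_!$, with Lemma~\ref{lem:dpl} handling the base case and composites $S^kQ_m$, and Lemma~\ref{lem:concol-pathlike} handling the connected, fibrewise colimits at each stage. Where you genuinely diverge is part (3): the paper proves $\lambda$-accessibility of $U^S\phi_!$ by yet another transfinite induction through the stages $Q_n$, whereas you observe that $\phi_!$, being a left adjoint, preserves all colimits and that $U^S$ creates $\lambda$-filtered colimits because $S$ is $\lambda$-accessible, so $P$ is $\lambda$-accessible for purely formal reasons; the hypothesis on $R$ then enters only (as in the paper, via the $\tilde R$ argument or proposition(2.2.3) of \cite{Weber-MultitensorsMonadsEnrGraphs}) to ensure $\ca G(U^R)$ creates $\lambda$-filtered colimits so that accessibility transfers from $P$ back to $T$. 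Your argument is shorter and makes clear that accessibility of $U^S\phi_!$ needs no induction at all. One small point you gloss over: you assert that the multiplication of $T$ is the identity on objects without justification. The paper handles this by noting that $\mu^T$ is a retraction of $\eta^TT$, and a retraction of an identity-on-objects morphism of $V$-graphs is itself identity-on-objects; you should include that one line (or an equivalent) to complete part (1).
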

\begin{proof}
Let us denote by $\rho:RU^R \to U^R$ the 2-cell datum of the Eilenberg-Moore object for $R$, and note that since $\ca G$ preserves Eilenberg Moore objects, one may identify $U^{\ca GR}=\ca G(U^R)$ and $\ca G\rho$ as the 2-cell datum for $\ca GR$'s Eilenberg-Moore object. Now $T$ is over $\Set$ iff $\ca G(U^R)T$ is. Moreover since $R$ preserves coproducts $U^R$ creates them, and so $T$ is path-like and distributive iff $\ca G(U^R)T$ is. Since $\ca G(U^R)T = U^S\phi_!$, it follows that $T$ is over $\Set$, path-like and distributive iff $U^S\phi_!$ is. Since the monads $S$ and $\ca GR$ are over $\Set$, as are $\rho$ and $\phi$, it follows by a transfinite induction using lemma(\ref{lem:concol-pathlike}) that all successive stages of this construction give functors and natural transformations over $\Set$, whence $U^S\phi_!$ is itself over $\Set$. Lemma(\ref{lem:dpl}) ensures that the functors $\ca GR$ and $\ca G(RU^R)$ are distributive and path-like, since $R$ preserves coproducts and $U^R$ creates them.
When $S$ is also distributive and path-like, then by the same sort of transfinite induction using lemmas(\ref{lem:dpl}) and (\ref{lem:concol-pathlike}), all successive stages of this construction give functors that are distributive and path-like, whence $U^S\phi_!$ is itself distributive and path-like.

Supposing $R$ to be $\lambda$-accessible, note that $\ca GR$ is also $\lambda$-accessible. One way to see this is to consider the distributive multitensor $\tilde{R}$ on $V$ whose unary part is $R$ and non-unary parts are constant at the initial object. Thus $\tilde{R}$ will be $\lambda$-accessible since $R$ is. To give an $\tilde{R}$-category structure on $X \in \ca GV$ amounts to giving $R$-algebra structures to the homs of $X$, and similarly on morphisms, whence one has a canonical isomorphism $\Enrich{\tilde{R}} \iso \ca G(V^R)$ over $\ca GV$, and thus by structure-semantics one obtains $\Gamma \tilde{R} \iso \ca GR$. Hence by lemma(\ref{lem:EAcc->GammaEAcc}), $\ca GR$ is indeed $\lambda$-accessible. But then it follows that $U^{\ca GR} = \ca G(U^R)$ creates $\lambda$-filtered colimits, and so $T$ is $\lambda$-accessible iff $\ca G(U^R)T = U^S\phi_!$ is. In the transfinite construction of $U^S\phi_!$, it is now clear that the functors involved at every stage are $\lambda$-accessible by yet another transfinite induction, and so $U^S\phi_!$ is $\lambda$-accessible as required.

To finish the proof we must check that $T$'s monad structure is over $\Set$. Since $\mu^T$ is a retraction of $\eta^TT$ it suffices to verify that $\eta^T$ is over $\Set$, which is equivalent to asking that the components of $\ca G(U^R)\eta^T$ are identities on objects. Returning to the more general setting of a morphism of monads $\phi:M \to S$ on $\ca E$ and $(X,x) \in \ca E^M$ discussed above, the outside of the diagram on the left
\[ \xygraph{{\xybox{\xygraph{!{0;(.9,0):(0,1)::} {MX}="tl" [r(4)] {X}="tr" [d(3)] {U^S\phi_!(X,x)}="br" [l(4)] {SX}="bl" "tl" [dr] {SMX}="itl" [r(2)] {SX}="itr" [l(1.5)d] {S^2X}="ib" 
"tl":"tr"^-{x}:"br"^-{U^M\eta^T_{(X,x)}}:@{<-}"bl"^-{U^Sq_{(X,x)}}:@{<-}"tl"^-{\phi_X}
"itl":"itr"^-{Sx}:@{<-}"ib"_(.55){\mu^S_X}:@{<-}"itl"^-{S\phi_X}
"tl":"itl"^-{\eta^S_{MX}} "tr":"itr"^-{\eta^S_X}:"br"_(.65){U^Sq_{(X,x)}} "bl":"ib"^-{\eta^S_{SX}} "bl":@/_{1.5pc}/"itr"_-{1}}}} [r(5)]
{\xybox{\xygraph{!{0;(2,0):(0,1)::} {\ca G(RU^R)}="tl" [r] {\ca GU^R}="tr" [d] {U^S\phi_{!}}="br" [l] {S\ca G(U^R)}="bl"
"tl":"tr"^-{\ca G\rho}:"br"^-{\ca G(U^R)\eta^T}:@{<-}"bl"^-{q}:@{<-}"tl"^-{{\phi}\ca G(U^R)}}}}} \]
clearly commutes, thus one has a commutative square as on the right in the previous display, and so the result follows.
\end{proof}
\begin{theorem}\label{thm:lift-mult}
\emph{({\bf Multitensor lifting theorem})} Let $\lambda$ be a regular cardinal and let $E$ be a $\lambda$-accessible distributive multitensor on a cocomplete category $V$. Then there is, to within isomorphism, a unique functor operad $(E',\sigma')$ on $V^{E_1}$ such that
\begin{enumerate}
\item  $(E',\sigma')$ is distributive.
\item  $\Enrich {E'} \iso \Enrich E$ over $\ca GV$.
\end{enumerate}
Moreover $E'$ is also $\lambda$-accessible.
\end{theorem}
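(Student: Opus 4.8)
The plan is to realise $E'$ as the multitensor associated, via theorem(\ref{thm:GammaE-characterisation}), to the monad $T$ governing $E$-categories over the base $V^{E_1}$, and then to read off all of its properties from lemma(\ref{lem:mnd-lift-mult}). First I would assemble the data that lemma requires. Take $R = E_1$: since $E$ is distributive its unary part $E_1$ preserves coproducts, and since $E$ is $\lambda$-accessible each $E_n$, in particular $E_1$, preserves $\lambda$-filtered colimits, so $R$ is a $\lambda$-accessible coproduct-preserving monad. Take $S = \Gamma E$, a monad on $\ca GV$ over $\Set$; it is distributive and path-like by theorem(\ref{thm:GammaE-characterisation}) and $\lambda$-accessible by lemma(\ref{lem:EAcc->GammaEAcc}). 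For the monad morphism $\phi:\ca GE_1 \to \Gamma E$ I would apply the $2$-functor $\Gamma$ to the sub-multitensor inclusion $\widetilde{E_1} \hookrightarrow E$ of the overview and compose with the isomorphism $\Gamma\widetilde{E_1} \iso \ca GE_1$ (established inside the proof of lemma(\ref{lem:mnd-lift-mult})); since $\Gamma$ lands in $\MND(\CAT/\Set)$, this $\phi$ is automatically over $\Set$.

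Feeding $(R,S,\phi)$ into lemma(\ref{lem:mnd-lift-mult}) then produces a monad $T$ on $\ca G(V^{E_1})$ which is over $\Set$ by part (1), distributive and path-like by part (2) (as $S$ is), and $\lambda$-accessible by part (3) (as $R$ is). The base $V^{E_1}$ has coproducts because $E_1$ preserves them and hence $U^{E_1}$ creates them, and it is cocomplete as the algebras of an accessible monad on a cocomplete category. Thus theorem(\ref{thm:GammaE-characterisation}) applies over $V^{E_1}$ and yields a distributive multitensor $E' := \overline{T}$ with $\Gamma E' \iso T$, while lemma(\ref{lem:EAcc->GammaEAcc}) shows $E'$ is $\lambda$-accessible. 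Condition (2) is then formal: $U = \phi^*$ is monadic with monad $T$, so $\Enrich{E'} \iso \ca G(V^{E_1})^{T} \iso \Enrich E$ over $\ca G(V^{E_1})$, and postcomposing the forgetful functors down to $\ca GV$ gives the isomorphism over $\ca GV$.

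The one genuinely new point---and the step I expect to be the main obstacle---is showing that $E'$ is a \emph{functor operad}, i.e.\ that its unary part $E'_1 = \overline{T}_1$ is the identity monad on $V^{E_1}$. The key observation is that the whole construction restricts to the full subcategory of single-edge $V$-graphs $\langle A \rangle$ (object set $\{0,1\}$, with every hom but that from $0$ to $1$ initial), which is a copy of $V$ inside $\ca GV$: both $\Gamma E$ and $\ca GE_1$ send such a graph to a single-edge graph whose hom is $E_1$ applied to the original, all the colimits of (\ref{eq:algebraic-left-adjoint}) are connected and computed one hom at a time, and $\phi$ restricts there to the identity monad morphism $E_1 \to E_1$. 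Lifting along an identity monad morphism is trivial, so $\phi_!$ restricts to the identity on $V^{E_1}$; concretely the reflexive coequaliser (\ref{eq:algebraic-left-adjoint}) becomes, on the hom from $0$ to $1$, the canonical presentation $E_1^2A \rightrightarrows E_1A \to A$ of the algebra $(A,a)$, whence $E'_1(A,a)\iso(A,a)$ naturally with unit the identity. This identifies $E'$ as a functor operad.

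The same unary-part analysis drives uniqueness. Given any distributive functor operad $F$ on $V^{E_1}$ with an isomorphism $\Enrich F \iso \Enrich E$ over $\ca GV$, factoring the forgetful functor $\Enrich F \to \ca GV$ through $\ca G(V^{E_1})$ induces a monad morphism $\ca GE_1 \to \Phi_F$ onto the monad $\Phi_F$ for $F$-categories, and by the single-edge computation above its unary part is the identity precisely because $F_1 = \id$. Since a multitensor morphism out of $\widetilde{E_1}$ is determined by its unary part (the non-unary parts having initial domain), this forces the given isomorphism to be one over $\ca G(V^{E_1})$. The structure--semantics theorem of Lawvere then supplies a unique monad isomorphism $\Gamma F \iso T \iso \Gamma E'$ over $\Set$, and the local full faithfulness of $\Gamma$ on $\DISTMULT$ upgrades this to an isomorphism $F \iso E'$ of multitensors, completing the uniqueness claim.
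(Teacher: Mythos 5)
Your overall route is the paper's own: feed $R=E_1$, $S=\Gamma E$ and $\phi=\Gamma(\widetilde{E_1}\hookrightarrow E)$ into lemma(\ref{lem:mnd-lift-mult}), pass the resulting monad $T$ through theorem(\ref{thm:GammaE-characterisation}) and lemma(\ref{lem:EAcc->GammaEAcc}), and get uniqueness from structure--semantics plus the local full faithfulness of $\Gamma$. You are also right to insist that something must be said about why $\overline{T}$ is a \emph{functor operad}, and about why an isomorphism over $\ca GV$ in the uniqueness hypothesis can be promoted to one over $\ca G(V^{E_1})$; the paper is silent on both points.

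However, the single-edge argument you give for $E'_1=\id$ does not work as stated. A multitensor has a nullary part $E_0$, which is the unit object of the lax monoidal structure and is typically not initial (for $\ca T_{\leq n}^{\times}$ it is terminal), and the formula for $\Gamma E$ gives $\Gamma E(A)(0,0)=\Gamma E(A)(1,1)=E_0$. So $\Gamma E$ does \emph{not} preserve single-edge $V$-graphs, and from the second stage of the transfinite construction onwards the $(0,1)$-hom receives contributions from all paths $0,\dots,0,1,\dots,1$, i.e.\ from summands of the form $E_{p+1+q}(Q(0,0),\dots,Q(0,0),Q(0,1),Q(1,1),\dots,Q(1,1))$. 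The computation therefore does not stay inside your ``copy of $V$'' and does not reduce to the split coequaliser $E_1^2A\rightrightarrows E_1A\to A$. Showing that these extra summands contribute nothing new is precisely the unit subtlety of section(\ref{ssec:weak units}): one must use that the $E_1$-algebra structure $a$ already absorbs the action of the formal identities living in $E_0$, via $E_{p+1+q}(E_0,\dots,E_1A,\dots,E_0)\xrightarrow{\sigma}E_1A\xrightarrow{a}A$. The statement is true, but it needs a genuine argument --- for instance, exhibit the candidate free $E$-category on $((A,a))$ explicitly (with endo-homs the free ``$E$-monoid'' and compositions defined through $\sigma$ and $a$) and verify its universal property, or first establish $E'_1\iso\id$ on free $E_1$-algebras, where it follows from the identification of $\phi_!\comp\ca G(\text{free})$ with the free $E$-category functor as in the closing remark of section(\ref{ssec:lifting-theorem-proof}), and then descend. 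The same gap infects your uniqueness paragraph: promoting the given isomorphism from ``over $\ca GV$'' to ``over $\ca G(V^{E_1})$'' requires knowing that the $E_1$-structure which the unary $E$-operations induce on the homs of an $F$-category coincides with the one already present, which is again the point at issue.
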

\begin{proof}
Write $\psi:\widetilde{E_1} \hookrightarrow E$ for the multitensor inclusion of the unary part of $E$, and then apply lemma(\ref{lem:mnd-lift-mult}) with $S=\Gamma{E}$, $R=E_1$ and $\phi=\Gamma{\psi}$ to produce a $\lambda$-accessible distributive and path-like monad $T$ on $\ca G(V^{E_1})$ over $\Set$. Thus by theorem(\ref{thm:GammaE-characterisation}) $\overline{T}$ is a distributive multitensor on $V^{E_1}$ with $\Enrich {\overline{T}} \iso \Enrich E$. Moreover since $T \iso \Gamma {\overline{T}}$ it follows by lemma(\ref{lem:EAcc->GammaEAcc}) that $\overline{T}$ is $\lambda$-accessible. As for uniqueness suppose that $(E',\sigma')$ is given as in the statement. Then by theorem(\ref{thm:GammaE-characterisation}) $\Gamma(E')$ is a distributive monad on $\ca G(V^{E_1})$ and one has
\[ \ca G(V^{E_1})^{\Gamma(E')} \iso \Enrich E \]
over $\ca G(V^{E_1})$. By structure-semantics one has an isomorphism $\Gamma(E'){\iso}T$ of monads. Since $\Gamma$ is locally fully faithful, one thus has an isomorphism $E'{\iso}\overline{T}$ of multitensors as required.
\end{proof}
From the above proofs in the explicit construction of $E'$ one first obtains $\Gamma E'$. In particular one has a coequaliser
\begin{equation}\label{eq:coeq-for-lifting}
\begin{aligned}
\xygraph{*!(0,.35)=(0,.5){\xybox{\xygraph{!{0;(2.8,0):(0,.2)::} {\Gamma E(E_1X_1,...,E_1X_n)}="l" [r] {\Gamma E(X_1,...,X_n)}="m" [r(1.1)] {\Gamma E'((X_1,x_1),...,(X_n,x_n))}="r" "l":@<1.2ex>"m" "l":@<-1.2ex>"m":"r"}}}}
\end{aligned}
\end{equation}
in $\Enrich{E}$, and then one takes
\[ \opEpr\limits_{i=1}^n (X_i,x_i) = \Gamma E'((X_1,x_1),...,(X_n,x_n))(0,n). \]
The set of objects for each of the $E$-categories appearing in (\ref{eq:coeq-for-lifting}) is  $\{0,...,n\}$, and the morphisms are all identities on objects. The explicit construction of (\ref{eq:coeq-for-lifting}) at the level of $V$-graphs proceeds, as we have seen, by a transfinite construction. From this and the definition of $V$-graphs of the form $\Gamma E(X_1,...,X_n)$, it is clear that for all the $E$-categories appearing in (\ref{eq:coeq-for-lifting}), the hom between $a$ and $b$ for $a > b$ is initial. Thus to understand (\ref{eq:coeq-for-lifting}) completely it suffices to understand the homs between $a$ and $b$ for $a \leq b$.

By the explicit description of the monad $\Gamma E$, the hom of each stage of the transfinite construction in $\ca GV$ of the coequaliser (\ref{eq:coeq-for-lifting}), depends on the homs between $c$ and $d$ -- where $a \leq c \leq d \leq b$ -- of the earlier stages of the construction. Thus the hom between $a$ and $b$ of (\ref{eq:coeq-for-lifting}) is
\begin{equation}\label{eq:homs-of-coeq-for-lifting}
\begin{aligned}
\xygraph{*!(0,.75)=(0,1){\xybox{\xygraph{!{0;(2,0):(0,.5)::} {\smash{\opE\limits_{a{<}i{\leq}b}} E_1X_i}="l" [r] {\smash{\opE\limits_{a{<}i{\leq}b}} X_i}="m" [r] {\smash{\opEpr\limits_{a{<}i{\leq}b}} (X_i,x_i)}="r"
"l":@<1.2ex>"m"^-{\sigma} "l":@<-1.2ex>"m"_-{\opE\limits_i x_i}:"r"}}}}
\end{aligned}
\end{equation}
and moreover, by virtue of its dependence on the intermediate homs (ie between $c$ and $d$ as above), this will not simply be the process of taking (\ref{eq:homs-of-coeq-for-lifting}) to be the coequaliser in $V^{E_1}$. For instance when $E'$ is the Gray tensor product as in example(\ref{ex:Gray}) below, what we have here is a description of the Gray tensor product of 2-categories in terms of certain coequalisers in $\Enrich{\tn{Gray}}$.

However note that when one applies $E'$ to sequences of free $E_1$-algebras, as in
\[ \begin{array}{lll} {\opEpr\limits_{i=1}^n (E_1X_i,\sigma_{(X_i)})} & = & {\Gamma E'((E_1X_1,\sigma_{(X_1)}),...,(E_1X_n,\sigma_{(X_n)}))(0,n)}
\\ & {=} & {\Gamma E' \ca GE_1(X_1,...,X_n)(0,n)} \\ & = & {\Gamma E(X_1,...,X_n)(0,n) \, = \, \opE\limits_{i=1}^n X_i} \end{array} \]
one simply recovers $E$.

\subsection{Gray and Crans tensor products}
\label{ssec:GrayCrans-key-examples}
In the examples that we present in this section we shall use the following notation. We denote by $A$ the appropriate $\ca T_{\leq{n{+}1}}$-operad over $\Set$ and by $E$ the $\ca T_{\leq{n}}^{\times}$-multitensor associated to it by theorem(\ref{thm:Operad-Multitensor-Equiv}), so that one has
\[ \begin{array}{lccr} {A = \Gamma E} &&& {E = \overline{A}} \end{array} \]
and $\ca G^{n+1}(\Set)^A \iso \Enrich E$ over $\ca G^{n+1}(\Set)$. The monad $E_1$ on $\ca G^n(\Set)$ has as algebras the structure borne by the homs of an $A$-algebra. Theorem(\ref{thm:lift-mult}) produces the functor operad $E'$ on $\ca G^n(\Set)^{E_1}$ such that
\[ \ca G^{n{+}1}(\Set)^A \iso \Enrich {E'} \iso \Enrich {E} \]
over $\ca G^n(\Set)^{E_1}$. Moreover $E'$ is the unique such functor operad which is distributive. The first of our examples is the most basic.
\begin{example}\label{ex:strict-n-cat}
When $A$ is the terminal $\ca T_{\leq{n{+}1}}$-operad, $E$ is the terminal $\ca T_{\leq{n}}^{\times}$-multitensor, and so $E_1 = \ca T_{\leq{n}}$. Since strict $(n{+}1)$-categories are categories enriched in $\Enrich n$ using cartesian products, and these commute with coproducts (in fact all colimits), it follows by the uniqueness part of theorem(\ref{thm:lift-mult}) that $E'$ is just the cartesian product of $n$-categories. 
\end{example}
The general context in which this example can be generalised is that of a distributive law of a multitensor over a monad, as described in section(5.2) of \cite{Weber-MultitensorsMonadsEnrGraphs}. Recall that in theorem(5.2.1) of \cite{Weber-MultitensorsMonadsEnrGraphs} such a distributive law was identified with structure on the monad making it opmonoidal with respect to the multitensor, and that analogously to the usual theory of distributive laws between monads, one has a lifting of the multitensor to the category of algebras of the monad. As the following example explains, the two senses of the word ``lifting'' -- coming from the theory of distributive laws, and from the lifting theorem -- are in fact compatible.
\begin{example}\label{ex:opmonoidal}
Let $E$ be a multitensor on $V$ and $T$ be an opmonoidal monad on $(V,E)$. Then one has by theorem(5.2.1) of \cite{Weber-MultitensorsMonadsEnrGraphs} a lifted multitensor $E'$ on $V^T$. On the other hand if moreover $V$ is cocomplete, $E$ is a distributive and accessible functor operad, and $T$ is coproduct preserving and accessible, then $E'$ may also be obtained by applying theorem(\ref{thm:lift-mult}) to the composite multitensor $EM(T)$. When $E$ is given by cartesian product $EM(T)$ is just another name for the multitensor $T^{\times}$, making $E'$ the cartesian product of $T$-algebras by the uniqueness part of theorem(\ref{thm:lift-mult}) and proposition(2.8) of \cite{BataninWeber-EnHop}. Specialising further to the case $T = \ca T_{{\leq}n}$, we recover example(\ref{ex:strict-n-cat}).
\end{example}
In the above examples we used the uniqueness part of theorem(\ref{thm:lift-mult}) to enable us to identify the lifted multitensor $E'$ as the cartesian product. In each case we had the cartesian product on the appropriate category of algebras as a candidate, and the aforementioned uniqueness told us that this candidate was indeed our $E'$ because the resulting enriched categories matched up. In the absence of this uniqueness, in order to identify $E'$ one would have to unpack its construction, and as we saw in the proof of lemma(\ref{lem:mnd-lift-mult}), this involves a transfinite colimit construction in the appropriate category of enriched graphs. The importance of this observation becomes greater as the operads we are considering become more complex. We now come to our leading example.
\begin{example}\label{ex:Gray}
Take $A$ to be the $\ca T_{\leq{3}}$-operad for Gray categories constructed in \cite{Batanin-MonGlobCats} (example(4) after corollary(8.1.1)). Since $E_1$ is the monad on $\ca G^2(\Set)$ for 2-categories, in this case $E'$ is a functor operad for 2-categories. However the Gray tensor product of 2-categories \cite{Gray-FormalCatThy} is part of a symmetric monoidal closed structure. Thus it is distributive as a functor operad, and since Gray categories are categories enriched in the Gray tensor product by definition, it follows that $E'$ is the Gray tensor product.
\end{example}
Lemma(2.5) of \cite{Crans-BraidingsSyllepsesSymmetries} unpacks the notion of 4-tas (``tas'' being the singular form, ``teisi'' being the plural) in detail. This explicit description can be interpretted as an explicit description of the $\ca T_{\leq 4}$-operad for 4-teisi. On the other hand from \cite{Crans-ATensorForGrayCats}, one can verify that $\tensor_{\tn{Crans}}$ is distributive in the following way. First we note that to say that $\tensor_{\tn{Crans}}$ is distributive is to say that
\[ \id \tensor c_i : A \tensor_{\tn{Crans}} B_i \longrightarrow A \tensor_{\tn{Crans}} B \]
is a coproduct cocone, for all Gray categories $A$ and coproduct cocones $(B_i \xrightarrow{c_i} B \,\, : \,\, i \in I)$ of Gray categories, since $\tensor_{\tn{Crans}}$ is symmetric. Since the forgetful functor $\Enrich{\tn{Gray}} \to \ca G(\Enrich{2})$ creates coproducts, and using the explicit description of coproducts of enriched graphs, a discrete cocone $(C_i \xrightarrow{k_i} C \,\, : \,\, i \in I)$ in $\Enrich{\tn{Gray}}$ is universal iff it is universal at the level of objects and each of the $k_i$'s is fully faithful (in the sense that the hom maps are isomorphisms of 2-categories). From the explicit description of $\tensor_{\tn{Crans}}$ given in section(4) of \cite{Crans-ATensorForGrayCats}, one may witness that
\[ A \tensor_{\tn{Crans}} (-) : \Enrich{\tn{Gray}} \longrightarrow \Enrich{\tn{Gray}} \]
preserves fully faithful Gray-functors. Thus the distributivity of $\tensor_{\tn{Crans}}$ follows since at the level of objects $\tensor_{\tn{Crans}}$ is the cartesian product.
\begin{example}\label{ex:Crans}
Take $A$ to be the $\ca T_{\leq 4}$-operad for 4-teisi. The associated multitensor $E$ has $E_1$ equal to the $\ca T_{{\leq}3}$-operad for Gray categories. Thus theorem(\ref{thm:lift-mult}) constructs a functor operad $E'$ of Gray categories whose enriched categories are 4-teisi. As we explained above $\tensor_{\tn{Crans}}$ is distributive, and so the uniqueness part of theorem(\ref{thm:lift-mult}) ensures that $E'=\tensor_{\tn{Crans}}$, since teisi are categories enriched in the Crans tensor product by definition.
\end{example}

\subsection{Day convolution}
\label{ssec:convolution-via-lifting}
While this article is directed primarily at an improved understanding of the examples discussed above, within a framework that one could hope will lead to an understanding of the higher dimensional analogues of the Gray tensor product, it is interesting to note that Day convolution can be seen as an instance of the multitensor lifting theorem.

The set of multimaps $(X_1,...,X_n) \to Y$ in a given multicategory $\C$ shall be denoted as $\C(X_1,...,X_n;Y)$. Recall that a linear map in $\C$ is a multimap whose domain is a sequence of length $1$. The objects of $\C$ and linear maps between them form a category, which we denote as $\C_l$, and we call this the linear part of $\C$. The set of objects of $\C$ is denoted as $\C_0$. Given objects
\[ \begin{array}{lcccr} {A_{11}, ..., A_{1n_1}, ... ..., A_{k1},...,A_{kn_k}} && {B_1,...,B_k} && {C} \end{array} \]
of $\C$, we denote by
\[ {\sigma_{A,B,C} : \C(B_1,...,B_k;C) \times \prod\limits_i \C(A_{i1},...,A_{in_i};B_i) \rightarrow \C(A_{11},...,A_{kn_k};C)} \]
the substitution functions of the multicategory $\C$. One thus induces a function
\[ \sigma_{A,C} : \int^{B_1,...,B_k} \C(B_1,...,B_k;C) \times \prod\limits_i \C(A_{i1},...,A_{in_i};B_i) \rightarrow \C(A_{11},...,A_{kn_k};C) \]
in which for the purposes of making sense of this coend, the objects $B_1,...,B_k$ are regarded as objects of the category $\C_{l}$. A \emph{promonoidal category} in the sense of Day \cite{Day-Convolution}, in the unenriched context, can be defined as a multicategory $\C$ such that these induced functions $\sigma_{A,C}$ are all bijective. A \emph{promonoidal structure} on a category $\D$ is a promonoidal category $\C$ such that $\C_l = \D^{\op}$.

A lax monoidal category $(V,E)$ is \emph{cocomplete} when $V$ is cocomplete as a category and $E_n:V^n \to V$ preserves colimits in each variable for all $n \in \N$. In this situation the multitensor $E$ is also said to be cocomplete. When $\C$ is small it defines a functor operad on the functor category $[\C_l,\Set]$ whose tensor product $F$ is given by the coend
\[ \opF\limits_i X_i = \int^{C_1,...,C_n} \C(C_1,...,C_n;-) \times \prod\limits_i X_iC_i \]
and substitution is defined in the evident way from that of $\C$. By proposition(2.1) of \cite{DayStreet-Substitudes} $F$ is a cocomplete functor operad and is called the \emph{standard convolution} structure of $\C$ on $[\C_l,\Set]$. By proposition(2.2) of \cite{DayStreet-Substitudes}, for each fixed category $\D$, standard convolution gives an equivalence between multicategories on $\C$ such that $\C_l = \D$ and cocomplete functor operads on $[\D,\Set]$, which restricts to the well-known \cite{Day-Convolution} equivalence between promonoidal structures on $\D^{\op}$ and closed monoidal structures on $[\D,\Set]$.

We have recalled these facts in a very special case compared with the generality at which this theory is developed in \cite{DayStreet-Substitudes}. In that work all structures are considered as enriched over some nice symmetric monoidal closed base $\ca V$, and moreover rather than $\D = \C_l$ as above, one has instead an identity on objects functor $\D \to \C_l$. The resulting combined setting is then what are called $\ca V$-substitudes in \cite{DayStreet-Substitudes}, and in the $\ca V = \Set$ case the extra generality of the functor $\D \to \C_l$, corresponds at the level of multitensors, to the consideration of general closed multitensors on $[\D,\Set]$ instead of mere functor operads. We shall now recover standard convolution, for the special case that we have described above, from the lifting theorem.

Given a multicategory $\C$ we define the multitensor $E$ on $[\C_0,\Set]$ via the formula
\[ \left(\opE\limits_{1{\leq}i{\leq}n} X_i\right)(C) = \coprod\limits_{C_1,...,C_n} \left(\C(C_1,...,C_n;C) \times \prod\limits_{1{\leq}i{\leq}n} X_i(C_i)\right) \]
using the unit and compositions for $\C$ in the evident way to give the unit $u$ and substitution $\sigma$ for $E$. When $\C_0$ has only one element, this is the multitensor on $\Set$ coming from the operad $P$ described in \cite{BataninWeber-EnHop} and \cite{Weber-MultitensorsMonadsEnrGraphs}, whose tensor product is given by the formula
\[ \opE\limits_{1{\leq}i{\leq}n} X_i = P_n \times X_1 \times ... \times X_n.  \]
An $E$-category with one object is exactly an algebra of the coloured operad $P$ in the usual sense. A general $E$-category amounts to a set $X_0$, sets $X(x_1,x_2)(C)$ for all $x_1,x_2 \in X_0$ and $C \in \C_0$, and functions
\begin{equation}\label{eq:sc-cat} \C(C_1,...,C_n;C) \times \prod\limits_i X(x_{i-1},x_i)(C_i) \to X(x_0,x_n)(C) \end{equation}
compatible in the evident way with the multicategory structure of $\C$. On the other hand an $F$-category amounts to a set $X_0$, sets $X(x_1,x_2)(C)$ natural in $C$, and maps as in (\ref{eq:sc-cat}) but which are natural in $C_1,...,C_n,C$, and compatible with $\C$'s multicategory structure. However this added naturality enjoyed by an $F$-category isn't really an additional condition, because it follows from the compatibility with the linear maps of $\C$. Thus $E$ and $F$-categories coincide, and one may easily extend this to functors and so give $\Enrich E \iso \Enrich F$ over $\ca G[\C_0,\Set]$.

The unary part of $E$ is given on objects by
\[ E_1(X)(C) = \coprod\limits_{D} \C_l(D,C) \times X(D) \]
which should be familiar -- $E_1$ is the monad on $[\C_0,\Set]$ whose algebras are functors $\C_l \to \Set$, and may be recovered from left Kan extension and restriction along the inclusion of objects $\C_0 \hookrightarrow \C_l$. Thus the category of algebras of $E_1$ may be identified with the functor category $[\C_l,\Set]$. Since the multitensor $E$ is clearly cocomplete, it satisfies the hypotheses of theorem(\ref{thm:lift-mult}), and so one has a unique finitary distributive multitensor $E'$ on $[\C_l,\Set]$ such that $\Enrich E \iso \Enrich {E'}$ over $\ca G[\C_0,\Set]$. By uniqueness we have
\begin{proposition}\label{prop:convolution-via-lifting}
Let $\C$ be a multicategory, $F$ be the standard convolution structure on $[\C_l,\Set]$ and $E$ be the multitensor on $[\C_0,\Set]$ defined above. Then one has an isomorphism $F \iso E'$ of multitensors.
\end{proposition}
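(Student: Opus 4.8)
The plan is to deduce the isomorphism purely from the uniqueness clause of the multitensor lifting theorem (theorem \ref{thm:lift-mult}), so that essentially no new computation is required beyond what the discussion preceding the proposition already supplies. Concretely, I must check that $F$ qualifies as the lifted functor operad $E'$: that is, that $F$ is a distributive functor operad on $[\C_l,\Set] = [\C_0,\Set]^{E_1}$, and that $\Enrich F \iso \Enrich E$ over $\ca G[\C_0,\Set]$. Since $E'$ is characterised up to isomorphism by exactly these two properties, establishing them for $F$ forces $F \iso E'$.

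First I would record that $F$ is a distributive functor operad. That $F$ is a functor operad on $[\C_l,\Set]$ is the content of standard convolution: by proposition(2.1) of \cite{DayStreet-Substitudes} the standard convolution structure is a cocomplete functor operad. Cocompleteness means each $F_n$ preserves colimits in every variable, and in particular preserves coproducts in every variable, which is precisely distributivity. The same cocompleteness makes $F$ finitary, matching the accessibility asserted of $E'$, although only distributivity is needed for the uniqueness clause.

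Second I would verify the enriched-category condition, which is already argued informally in the paragraph preceding the statement. An $E$-category and an $F$-category carry literally the same underlying data: a set $X_0$, hom-presheaves $X(x_{i-1},x_i)$, and composition functions as in (\ref{eq:sc-cat}) compatible with the multicategory structure of $\C$. The only formal difference is that the composition maps of an $F$-category are required to be natural in the arguments $C_1,\ldots,C_n$, but this naturality is forced by compatibility with the linear maps of $\C$, so the two notions coincide, and likewise for morphisms. The point I would take care over is that this identification is genuinely over $\ca G[\C_0,\Set]$ in the sense the theorem demands: the forgetful functor $\Enrich F \to \ca G[\C_0,\Set]$ is the composite of $U^F:\Enrich F \to \ca G[\C_l,\Set]$ with $\ca G(\res)$ for the restriction $\res:[\C_l,\Set] \to [\C_0,\Set]$ along $\C_0 \hookrightarrow \C_l$, and one must see that this sends an $F$-category to the same $V$-graph as the corresponding $E$-category does. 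This is immediate, since restricting a hom-presheaf on $\C_l$ to $\C_0$ returns exactly the hom-presheaf of the associated $E$-category, so the isomorphism $\Enrich E \iso \Enrich F$ commutes with the projections to $\ca G[\C_0,\Set]$.

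With both conditions in hand, the uniqueness part of theorem \ref{thm:lift-mult} applies verbatim and yields $F \iso E'$ as functor operads on $[\C_l,\Set]$. The only genuine obstacle here is bookkeeping rather than mathematics: ensuring that the coend defining $F$ and the coproduct-of-products defining $E$ really do present the same enriched categories over the correct base, i.e.\ that the passage from the free-presheaf description (via $E$ on $[\C_0,\Set]$) to the coend description (via $F$ on $[\C_l,\Set]$) is compatible with the restriction functor $\res$. Once this compatibility is pinned down, the result is formal.
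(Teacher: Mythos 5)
Your proposal is correct and follows the paper's own route exactly: the paper likewise verifies that $F$ is a cocomplete (hence distributive) functor operad on $[\C_l,\Set]\iso[\C_0,\Set]^{E_1}$, identifies $E$-categories with $F$-categories over $\ca G[\C_0,\Set]$, and concludes by the uniqueness clause of theorem(\ref{thm:lift-mult}). Your extra care about the forgetful functors factoring through restriction along $\C_0\hookrightarrow\C_l$ is a point the paper leaves implicit, but it is the same argument.
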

In particular when $\C$ is a promonoidal category proposition(\ref{prop:convolution-via-lifting}) expresses classical unenriched Day convolution as a lift in the sense of theorem(\ref{thm:lift-mult}).

\subsection{The 2-functoriality of multitensor lifting}
\label{ssec:functoriality-lifting}
We now express the lifting theorem as a coreflection to the inclusion of functor operads within a 2-category of multitensors which are sufficiently nice that theorem(\ref{thm:lift-mult}) can be applied to them. 

Recall \cite{Street-FTM} that when a 2-category $\ca K$ has Eilenberg-Moore objects, one has a 2-functor
\[ \begin{array}{lccr} {\tn{sem}_{\ca K} : \MND(\ca K) \longrightarrow \ca K^{[1]}} &&& {(V,T) \mapsto U^T:V^T \to V} \end{array} \]
which on objects sends a monad $T$ to the forgetful arrow $U^T$ which forms part of the Eilenberg-Moore object of $T$, and that a straight forward consequence of the universal property of Eilenberg-Moore objects is that $\tn{sem}_{\ca K}$ (``sem'' being short for ``semantics'') is 2-fully-faithful. In the case $\ca K = \CAT$ if one restricts attention to the sub-2-category of $\MND(\CAT)$ consisting of the 1-cells of the form $(1_{\ca E},-)$, then one refinds the structure-semantics result of Lawvere referred to earlier.

The 2-fully-faithfulness of $\tn{sem}_{\ca K}$ says that the one and 2-cells of the 2-category $\MND(\ca K)$ admit an alternative ``semantic'' description. Given monads $(V,T)$ and $(W,S)$ in $\ca K$, to give a monad functor $(H,\psi):(V,T) \rightarrow (W,S)$, is to give $\tilde{H}:V^T \rightarrow W^S$ such that $U^S\tilde{H}=HU^T$; and to give a monad 2-cell $\phi:(H_1,\psi_1) \rightarrow (H_2,\psi_2)$ is to give $\phi:H_1{\rightarrow}H_2$ and $\tilde{\phi}:\tilde{H_1}{\rightarrow}\tilde{H_2}$ commuting with $U^T$ and $U^S$. Note that Eilenberg-Moore objects in $\CAT/\Set$ are computed as in $\CAT$, and we shall soon apply these observations to the case $\ca K = \CAT/\Set$.

In view of theorem(\ref{thm:GammaE-characterisation}) of this article and proposition(4.4.2) of \cite{Weber-MultitensorsMonadsEnrGraphs}, the 2-functor $\Gamma$ restricts to a 2-equivalence
\begin{equation}\label{eq:2eq-from-Gamma}
\ADM \catequiv \AGMnd.
\end{equation}
Here $\ADM$ is the full sub-2-category of $\DISTMULT$ consisting of the $(V,E)$ such that $V$ is cocomplete and the multitensor $E$ is accessible. The 2-category $\AGMnd$ is defined as follows:
\begin{itemize}
\item objects are pairs $(V,T)$, where $V$ is a cocomplete category and $T$ is a monad on $\ca GV$ which is distributive, path-like and accessible.
\item an arrow $(V,T) \to (W,S)$ is a pair $(H,\psi)$, where $H:V \to W$ is a functor, and $\psi$ is a natural transformation making $(\ca GH,\psi):(\ca GV,T) \to (\ca GW,S)$ a morphism of $\MND(\CAT/\Set)$.
\item a 2-cell $(H,\psi) \to (K,\kappa)$ is a monad 2-cell $(\ca GH,\psi) \to (\ca GK,\kappa)$. 
\end{itemize}
Since by \cite{Weber-MultitensorsMonadsEnrGraphs} proposition(2.1.6), the effect $\ca G_1 : \CAT \longrightarrow \CAT/\Set$ of $\ca G$ on arrows into $1$ is locally fully faithful, the data of a 2-cell of $\AGMnd$ will be of the form $\ca G\phi$ for a unique natural transformation $\phi:H \to K$. Moreover from the recollections of the previous paragraph, the one and 2-cells of $\AGMnd$ can be reinterpretted in semantic terms.

Let us denote by $\AFO$ the full sub-2-category of $\ADM$ consisting of the $(V,E)$ such that $E$ is a functor-operad, and by
\[ I : \AFO \longrightarrow \ADM \]
the inclusion.
\begin{proposition}\label{prop:2-fun-lifting-thm}
The assignment $(V,E) \mapsto (V^{E_1},E')$, with $E'$ defined by theorem(\ref{thm:lift-mult}), is the object map of a right adjoint
\[ C : \ADM \longrightarrow \AFO \]
to the inclusion $I$.
\end{proposition}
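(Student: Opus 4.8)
The plan is to work throughout via the 2-equivalence $\Gamma : \ADM \catequiv \AGMnd$ of (\ref{eq:2eq-from-Gamma}) together with the 2-fully-faithfulness of $\tn{sem}$, so that a 1-cell $(W,G) \to (V,E)$ of $\ADM$ may be described semantically as a functor $H : W \to V$ with a lift $\tilde H : \Enrich G \to \Enrich E$ of $\ca GH$ commuting with the forgetful functors, and a 2-cell as a compatible pair of natural transformations. Concretely, $\tilde H$ sends a $G$-category $Y$ to the $E$-category with the same objects and homs $HY(a,b)$, whose composition is $\opE\limits_i HY(x_{i-1},x_i) \to H\opG\limits_i Y(x_{i-1},x_i) \to HY(x_0,x_n)$, the first map being the lax constraint $\theta$ of the lax monoidal functor and the second $H$ applied to the composition of $Y$. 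First I would define the counit. Since $E'$ is a functor operad by theorem(\ref{thm:lift-mult}), the object $C(V,E) = (V^{E_1},E')$ does lie in $\AFO$, and I take $\epsilon_{(V,E)} : IC(V,E) \to (V,E)$ to be the 1-cell whose semantic description is the canonical isomorphism $\Enrich {E'} \iso \Enrich E$ over $\ca GV$ supplied by theorem(\ref{thm:lift-mult}); in lax monoidal terms this is $U^{E_1}$ equipped with the comparison maps $\opE\limits_i X_i \to \opEpr\limits_i (X_i,x_i)$ of (\ref{eq:homs-of-coeq-for-lifting}).

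The heart of the proof is the universal property of $\epsilon_{(V,E)}$: that for every $(W,G) \in \AFO$, postcomposition with $\epsilon_{(V,E)}$ is an isomorphism of hom-categories $\ADM((W,G),(V^{E_1},E')) \to \ADM((W,G),(V,E))$. Given a 1-cell $f = (H,\theta)$ on the right, I would first observe that its unary constraint $\theta_1 : E_1 HX \to HX$ — here $H\opG\limits_{i=1}^1 X = HG_1 X = HX$ since $G_1 = \id$ — is, by the unit and associativity axioms for $\theta$ restricted to unary operations, an $E_1$-algebra structure on $HX$ natural in $X$. This defines a functor $\hat H : W \to V^{E_1}$ with $U^{E_1}\hat H = H$. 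The crucial point is then to identify the base of $\tilde H$, transported across the isomorphism of theorem(\ref{thm:lift-mult}), with $\ca G\hat H$. Inspecting the induced composition displayed above at $n=1$, the $E_1$-action on the hom $(a,b)$ of $\tilde H Y$ is $H(\kappa^Y_1)\comp\theta_1$, where $\kappa^Y_1 : G_1 Y(a,b) \to Y(a,b)$ is the unary composition of the $G$-category $Y$; but $G$ is a functor operad, so by the unit axiom for $G$-categories $\kappa^Y_1 = \id$, whence this action is exactly $\theta_1$. Thus the $E_1$-structures on the homs of $\tilde H Y$ depend only on the underlying $V$-graph of $Y$ and are precisely those of $\hat H$, so that $\iso^{-1}\comp\tilde H : \Enrich G \to \Enrich{E'}$ is indeed over $\ca G\hat H$ and defines a 1-cell $\hat f : (W,G) \to (V^{E_1},E')$ with $\epsilon_{(V,E)}\comp I\hat f = f$. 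Uniqueness is forced: the base of any lift must be $\ca G\hat H$, by $U^{E_1}\hat H = H$ and faithfulness of $\ca G(U^{E_1})$, and its semantic part must be $\iso^{-1}\comp\tilde H$, since $\epsilon_{(V,E)}$ is an isomorphism. The analogous, easier, argument on 2-cells — a monoidal natural transformation lifts uniquely because $U^{E_1}$ is faithful and the required $E_1$-algebra compatibility is already encoded in the agreeing unary constraints — shows the functor on hom-categories is an isomorphism.

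Finally I would assemble these isomorphisms into the adjunction $I \ladj C$, naturality of $\epsilon$ in $(V,E)$ and the triangle identities following from the constructions; the unit $\eta_{(W,G)} : (W,G) \to CI(W,G)$ is then an isomorphism, reflecting that lifting a functor operad returns it unchanged (when $G_1 = \id$ the morphism $\phi = \Gamma\psi$ is invertible, so $\overline{T} \iso G$), consistently with $I$ being the fully faithful inclusion of a coreflective sub-2-category. I expect the main obstacle to be exactly the identification of the base of $\tilde H$ with $\ca G\hat H$: this is the step where one must genuinely use that the source $G$ is a functor operad, and where working semantically via $E$-categories sidesteps the difficulty — flagged after (\ref{eq:homs-of-coeq-for-lifting}) — that the lifted tensor product $\opEpr\limits_i$ is not computed as a naive hom-wise coequaliser in $V^{E_1}$.
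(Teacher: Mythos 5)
Your proof is correct, but it is organised differently from the paper's. You establish the adjunction by exhibiting each $\varepsilon_{(V,E)}$ as a couniversal arrow: for $(W,G) \in \AFO$ you show that composition with $\varepsilon_{(V,E)}$ induces an isomorphism of hom-categories $\AFO((W,G),(V^{E_1},E')) \to \ADM(I(W,G),(V,E))$, the key computations being that the unary constraint $\theta_1$ of a lax monoidal functor out of a functor operad is an $E_1$-algebra structure, and that the base of the semantic lift $\tilde H$, transported across $\Enrich{E'}\iso\Enrich{E}$, is $\ca G\hat H$. The paper instead defines $C$ explicitly on \emph{all} 1-cells and 2-cells of $\ADM$ (not only those with source in $\AFO$) by transporting the left-hand square of the diagram of forgetful functors through the 2-equivalence (\ref{eq:2eq-from-Gamma}) and the 2-fully-faithfulness of $\tn{sem}_{\CAT/\Set}$, and then verifies 2-naturality of $\varepsilon$ together with the two identities $\varepsilon I = \id$ and $C\varepsilon = \id$, so that the unit is an identity. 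Your route is the more standard one and dispenses with the naturality check; the paper's route buys the explicit formula $C(H,\psi)=(\psi_1^*,\psi')$ for arbitrary morphisms of $\ADM$, which is what is used later, e.g.\ in remark(\ref{rem:product-comparison-unreduced-context}), where $C$ is applied to a 1-cell between two objects neither of which is a functor operad. One small repair is needed in your uniqueness step: ``faithfulness of $\ca G(U^{E_1})$'' does not by itself force the base of a competing lift $(K,\chi)$ to be $\ca G\hat H$, since knowing only $U^{E_1}K=H$ leaves the $E_1$-algebra structures on the objects $KX$ undetermined. What pins them down is that the unary constraint of the composite $\varepsilon_{(V,E)}\comp I(K,\chi)$ is exactly the $E_1$-action of $KX$ (because $\chi_1=\id$ for a lax monoidal functor between functor operads), so equating it with $\theta_1$ forces $KX=(HX,\theta_{1,X})$; this is the computation you already made for existence, read backwards.
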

\begin{proof}
We shall define the coreflection $C$ and a 2-natural transformation $\varepsilon:IC \to 1$ such that $C\varepsilon = \id$ and $\varepsilon I = \id$, so that $\varepsilon$ is the counit of an adjunction $I \ladj C$ whose unit is an identity. Let $(H,\psi):(W,F) \to (V,E)$ be a lax monoidal functor. Then we have the following serially commutative diagram of forgetful functors
\[ \xygraph{!{0;(2,0):(0,.5)::} {\Enrich{F}}="p1" [r] {\ca G(W^{F_1})}="p2" [r] {\ca GW}="p3" [d] {\ca GV}="p4" [l] {\ca G(V^{E_1})}="p5" [l] {\Enrich{E}}="p6"
"p1":"p2"^-{U^{F'}}:"p3"^-{\ca G(U^{F_1})}:"p4"^-{\ca GH}:@{<-}"p5"^-{\ca G(U^{E_1})}:@{<-}"p6"^-{U^{E'}}:@{<-}"p1"^-{\psi^*} "p2":"p5"^{\ca G(\psi_1^*)}
"p1":@/^{2.2pc}/"p3"^-{U^F} "p6":@/_{2.2pc}/"p4"_-{U^E}} \]
The left-most square is the semantic interpretation of a morphism of monads
\[ (\ca G(\psi_1^*),\tilde{\psi}):(\ca G(W^{F_1}),\Gamma F') \longrightarrow (\ca G(V^{E_1}),\Gamma E') \]
and so by (\ref{eq:2eq-from-Gamma}), there is a unique natural transformation $\psi'$ giving the coherence data of a lax monoidal functor
\[ (\psi^*_1,\psi'):(W^{F_1},F') \longrightarrow (V^{E_1},E') \]
such that $\Gamma(\psi^*_1,\psi')=(\ca G(\psi_1^*),\tilde{\psi})$. This enables us to define the 1-cell map of $C$ as $C(H,\psi)=(\psi^*_1,\psi')$, and similarly using the semantic interpretation of the 2-cells of $\AGMnd$ and (\ref{eq:2eq-from-Gamma}), one defines the 2-cell map of $C$. The 2-functoriality of $C$ follows from that of $\Gamma$ and $\tn{sem}_{\CAT/\Set}$.

For $(V,E) \in \ADM$ one has
\[ \xygraph{!{0;(2,0):(0,.5)::} {\Enrich{E}}="tl" [r] {\ca G(V^{E_1})}="tr" [d] {\ca GV}="br" [l] {\Enrich{E}}="bl" "tl":"tr"^-{U^{E'}}:"br"^-{\ca G(U^{E_1})}:@{<-}"bl"^-{U^{E}}:@{<-}"tl"^-{1}} \]
which is the semantic interpretation of
\[ (U^{E_1},\overline{\varepsilon_E}) : (V^{E_1},E') \longrightarrow (V,E) \, \in \, \ADM \]
and we take this to be the component $\varepsilon_{(V,E)}$ of $\varepsilon$. To verify naturality with respect to $(H,\psi)$ as above, note that $\tn{sem}_{\CAT/\Set}(\varepsilon_{(V,E)}(\psi_1^*,\psi'))$ is the composite morphism $U^{F'} \to U^{E_1}$ depicted on the left in
\[ \xygraph{{\xybox{\xygraph{!{0;(2,0):(0,.5)::} {\Enrich{F}}="p1" [r] {\ca G(W^{F_1})}="p2" [d] {\ca G(V^{E_1})}="p3" [d] {\ca GV}="p4" [l] {\Enrich{E}}="p5" [u] {\Enrich{E}}="p6"
"p1":"p2"^-{U^{F'}}:"p3"^-{\ca G(\psi_1^*)}:"p4"^-{\ca G(U^{E_1})}:@{<-}"p5"^-{U^E}:@{<-}"p6"^-{1}:@{<-}"p1"^-{\psi^*} "p6":"p3"^-{U^{E'}}}}} [r(4)]
{\xybox{\xygraph{!{0;(2,0):(0,.5)::} {\Enrich{F}}="p1" [r] {\ca G(W^{F_1})}="p2" [d] {\ca GW}="p3" [d] {\ca GV}="p4" [l] {\Enrich{E}}="p5" [u] {\Enrich{F}}="p6"
"p1":"p2"^-{U^{F'}}:"p3"^-{\ca G(U^{F_1})}:"p4"^-{\ca GH}:@{<-}"p5"^-{U^E}:@{<-}"p6"^-{\psi^*}:@{<-}"p1"^-{1} "p6":"p3"^-{U^F}}}}} \]
whereas $\tn{sem}_{\CAT/\Set}((H,\psi)\varepsilon_{(W,F)})$ is the composite depicted on the right. Since $U^{E_1}\psi_1^* = HU^{F_1}$ these are equal and so $\varepsilon$ is natural with respect to $(H,\psi)$ by the fully faithfulness of $\tn{sem}_{\CAT/\Set}$. The 2-naturality of $\varepsilon$ follows similarly using the 2-fully faithfulness of $\tn{sem}_{\CAT/\Set}$. When $E$ is itself a functor operad one has that $E_1$ is the identity monad, and so $U^{E_1}$ and thus $\varepsilon_{(V,E)}$ are identities, whence $\varepsilon I = \id$. On the other hand for any $(V,E) \in \ADM$, $\overline{\varepsilon_E}^*_1 = \id$ by definition, and so $C\varepsilon_{(V,E)} = \id$.
\end{proof}
As pointed out in section(6.1) of \cite{Weber-MultitensorsMonadsEnrGraphs}, the data $\phi:E \to F$ of a morphism of multitensors on some category $V$ can also be regarded as the coherences making the identity functor $1_V$ into a lax monoidal functor $(1_V,\phi):(V,F) \to (V,E)$, or alternatively as the coherences making $1_V$ into an oplax monoidal functor $(1_V,\phi):(V,E) \to (V,F)$.
\begin{remark}\label{rem:product-comparison-unreduced-context}
Suppose that $\psi:E \rightarrow \ca T^{\times}_{{\leq}n}$ is a $\ca T^{\times}_{{\leq}n}$-multitensor. Then the datum $\psi$ can also be regarded as the coherence data for a lax monoidal functor
\[ (1_{\ca G^n(\Set)},\psi) : (\ca G^n(\Set),\ca T^{\times}_{{\leq}n}) \longrightarrow (\ca G^n(\Set),E), \]
applying $C$ to it gives a lax monoidal functor
\[ \begin{array}{c} {(\psi_1^*,\psi') : (\ca G^n(\Set)^{\ca T_{\leq n}},\prod) \longrightarrow (\ca G^n(\Set)^{E_1},E')} \end{array} \]
and the components of $\psi'$ are morphisms
\[ \begin{array}{c} {\psi'_{(X_1,x_1),...,(X_n,x_n)} : {\opE\limits_i}' \psi_1^*(X_i,x_i) \longrightarrow \prod\limits_{i=1}^n \psi_1^*(X_i,x_i)} \end{array} \]
of $E_1$-algebras defined for each sequence $((X_1,x_1),...,(X_n,x_n))$ of strict $n$-categories. The codomain of these morphisms can be written as above since $\psi_1^*$ as a right adjoint preserves products. This gives a general comparison map between the functor operad $E'$ produced by theorem(\ref{thm:lift-mult}) and cartesian products, defined for sequences of $E_1$-algebras that underlie strict $n$-categories. In particular when $E$ is as in example(\ref{ex:Gray}), then $\psi_1^*$ is the identity, and one has the comparison maps between the Gray tensor product and the cartesian product of 2-categories.
\end{remark}
One of the expected features of the higher dimensional analogues of the Gray tensor product are such comparisons with cartesian product, which moreover are expected to be equivalences in the appropriate higher categorical sense. We shall see in remark(\ref{rem:product-comparisons-reduced-context}) below, that in the context of reduced $\ca T_{\leq n}$-operads, one always gets such comparison maps with the cartesian product, and these are defined for \emph{all} $E_1$-algebras.

\section{Weak $n$-categories via algebraic weak factorisation systems}
\label{sec:weak-n-cat-review}

\subsection{Overview}
\label{ssec:ncatopreview-overview}
In this section we review the operadic definition of weak $n$-category (more precisely Leinster's variant \cite{Leinster-HDA-book} of Batanin's \cite{Batanin-MonGlobCats} original definition) in a way that allows adaptation to the strictly unital case in section(\ref{sec:strictly-unital-weak-n-cats}). The overall scheme of this definition is as follows. One has the category $\NColl {\ca T_{\leq n}}$ of $\ca T_{\leq n}$-collections over $\Set$, and this is a presheaf category. There are two finitary monads on $\NColl {\ca T_{\leq n}}$, $\tn{Opd}_{\leq n}$ and $\tn{Cont}_{\leq n}$, whose algebras are $\ca T_{\leq n}$-operads over $\Set$ and $\ca T_{\leq n}$-collections over $\Set$ with chosen contractions (in the sense to be recalled below), respectively. The coproduct of these monads $\tn{Opd}_{\leq n} \coprod \tn{Cont}_{\leq n}$ exists and is finitary, and its algebras are $\ca T_{\leq n}$-operads over $\Set$ equipped with chosen contractions. As the category of algebras of a finitary monad on a presheaf 
category, the category of $\ca T_{\leq 
n}$-operads over $\Set$ with chosen contractions is locally finitely presentable, and so has an initial object $\ca K_{\leq n}$. A weak $n$-category is by definition an algebra for $\ca K_{\leq n}$.

The monad $\tn{Opd}_{\leq n}$ can be thought about in a few ways. Most generally, one has a finite limit sketch $S_1$ whose $\Set$-valued models are $\ca T_{\leq n}$-operads over $\Set$, another such sketch $S_2$ (this time without any distinguished limit cones) for $\ca T_{\leq n}$-collections over $\Set$, and an inclusion $S_2 \hookrightarrow S_1$ of finite limit sketches which induces the finitary monad $\tn{Opd}_{\leq n}$. Another viewpoint is that $\NOp{\ca T_{\leq n}}$ is the category of monoids for the substitution tensor product on $\NColl{\ca T_{\leq n}}$, this tensor product being filtered colimit preserving in one variable and cocontinuous in the other, and so by general well-known results such as those of \cite{Kelly-Transfinite}, the monoid monad exists and is finitary. Most particularly, an explicit description of $\tn{Opd}_{\leq n}$ is given in \cite{Leinster-HDA-book} Appendix D, from which its finitariness and many other desirable properties may be witnessed.

As for $\tn{Cont}_{\leq n}$, our preferred approach will be that of \cite{Garner-LeinsterDefinition}, which is to use the theory of cofibrantly-generated algebraic weak factorisation systems from \cite{Garner-SmallObjectArgument} to give a conceptual understanding of this monad. We shall spend some time reviewing the relevant aspects of \cite{Garner-SmallObjectArgument} as we shall use this machinery in section(\ref{sec:strictly-unital-weak-n-cats}).

\subsection{Algebraic weak factorisation systems}
\label{ssec:ncatopreview-awfs}
Composition in any category $\ca E$ can be regarded as a morphism $\tn{comp}_{\ca E}$ of spans of categories
\[ \xygraph{!{0;(2,0):(0,.3)::} {\ca E}="p1" [ur] {\ca E^{[2]}}="p2" [dr] {\ca E}="p3" [dl] {\ca E^{[1]}}="p4" "p1":@{<-}"p2"^-{}:"p3"^-{}:@{<-}"p4"^-{t_{\ca E}}:"p1"^-{s_{\ca E}} "p2":"p4"^{\tn{comp}_{\ca E}}} \]
where $s_{\ca E}$ (resp. $t_{\ca E}$) takes the source (resp. target) of a given morphism of $\ca E$. A \emph{functorial factorisation} is a morphism of spans that is a section of $\tn{comp}_{\ca E}$. On objects a given functorial factorisation $F:\ca E^{[1]} \to \ca E^{[2]}$ takes an arrow $f:X \to Y$ of $\ca E$ to a composable pair
\[ X \xrightarrow{Lf} Kf \xrightarrow{Rf} Y \]
of morphisms whose composite is $f$. With the evident morphisms one has a category $\tn{FF}(\ca E)$ of functorial factorisations for $\ca E$. For a given $F$ the assignment $f \mapsto Lf$ defines a copointed endofunctor $\varepsilon:L \to 1$ on $\ca E^{[1]}$ such that $s_{\ca E}\varepsilon=\id_{s_{\ca E}}$, where the $f$-component of $\varepsilon$ is given as on the left in
\[ \xygraph{{\xybox{\xygraph{{X}="tl" [r] {X}="tr" [d] {Y}="br" [l] {Kf}="bl" "tl":"tr"^-{1_X}:"br"^-{f}:@{<-}"bl"^-{Rf}:@{<-}"tl"^-{Lf}}}} [r(3)]
{\xybox{\xygraph{{X}="tl" [r] {Kf}="tr" [d] {Y}="br" [l] {Y}="bl" "tl":"tr"^-{Lf}:"br"^-{Rf}:@{<-}"bl"^-{1_Y}:@{<-}"tl"^-{f}}}}} \]
and similarly, the assignment $f \mapsto Rf$ defines a pointed endofunctor $\eta:1 \to R$ on $\ca E^{[1]}$ such that $t_{\ca E}\eta=\id_{t_{\ca E}}$, with the $f$-component of $\eta$ given as on the right in the previous display. Clearly these give three equivalent viewpoints on the notion, that is to say, to give a functorial factorisation $F$, is to give $(L,\varepsilon)$ such that $s_{\ca E}\varepsilon=\id_{s_{\ca E}}$, which in turn is equivalent to giving $(R,\eta)$ such that $t_{\ca E}\eta=\id_{t_{\ca E}}$. Thus (following \cite{Garner-SmallObjectArgument}) we shall also denote $F$ as the pair $(L,R)$. The assignment $f \mapsto Kf$ is the object map of a functor $K:\ca E^{[1]} \to \ca E$ that we call the \emph{image} of a given functorial factorisation.

Composition of the associated pointed endofunctors gives $\tn{FF}(\ca E)$ a strict monoidal structure, with tensor product denoted as $\comp_{r}$, and whose unit is initial. Composition of the associated copointed endofunctors gives $\tn{FF}(\ca E)$ another strict monoidal structure, with terminal unit and tensor product denoted as $\comp_l$. A key observation of \cite{Garner-SmallObjectArgument} is that one has natural maps
\[ \lambda_{F_1,F_2,F_3,F_4} : (F_1 \comp_l F_2) \comp_r (F_3 \comp_l F_4) \longrightarrow (F_1 \comp_r F_3) \comp_l (F_2 \comp_r F_4) \]
providing the interchange maps of a duoidal structure. Recall that a \emph{duoidal} structure \cite{AguiarMahajan-MonoidalFunctorsSpeciesHopf} \cite{BataninMarkl-HomotopyCentres} on a category $\ca V$ is a pair of monoidal structures $(I_1,\tensor_1,\alpha_1,\lambda_1,\rho_1)$ and $(I_2,\tensor_2,\alpha_2,\lambda_2,\rho_2)$, together with morphisms $\eta:I_1 \to I_2$, $\mu:I_2 \tensor I_2 \to I_2$ and $\delta:I_1 \to I_1 \tensor_2 I_1$, and natural morphisms
\[ \lambda_{A,B,C,D} : (A \tensor_2 B) \tensor_1 (C \tensor_2 D) \longrightarrow (A \tensor_1 C) \tensor_2 (B \tensor_1 D) \]
called interchange maps, such that
\begin{itemize}
\item $(\eta,\mu)$ makes $I_2$ a $\tensor_1$-monoid.
\item $(\eta,\delta)$ makes $I_1$ a $\tensor_2$-comonoid.
\item $(\iota,\delta)$ are the coherences making $\tensor_2 : \ca V \times \ca V \to \ca V$ lax monoidal for $\tensor_1$.
\item $(\iota,\mu)$ are the coherences making $\tensor_1 : \ca V \times \ca V \to \ca V$ oplax monoidal for $\tensor_2$.
\end{itemize}
The key observation alluded to above is precisely that one has a duoidal structure on $\tn{FF}(\ca E)$ for which $I_1 = 0$, $I_2 = 1$, $\tensor_1 = \comp_r$ and $\tensor_2 = \comp_l$.

A duoidal category is a natural environment for a notion of bialgebra. A \emph{bialgebra} structure on $X$ in a general duoidal category $\ca V$ consists of morphisms $i:I_1 \to X$, $m:X \tensor_1 X \to X$, $c:X \to I_2$ and $d:X \to X \tensor_2 X$, such that $(i,m)$ is a $\tensor_1$-monoid structure, $(c,d)$ is a $\tensor_2$-comonoid structure, and these are compatible in the following equivalent ways:
\begin{itemize}
\item $i$ and $m$ are morphisms of $\tensor_2$-comonoids.
\item $c$ and $d$ are morphisms of $\tensor_1$-monoids.
\end{itemize}
An \emph{algebraic weak factorisation system} on $\ca E$ (originally called a \emph{natural} weak factorisation system in \cite{Garner-SmallObjectArgument}) is by definition a bialgebra in the duoidal category $\tn{FF}(\ca E)$.

In particular for an algebraic weak factorisation system $F$, the copointed endofunctor $(L,\varepsilon)$ has in addition a comultiplication making it into a comonad, and the pointed endofunctor $(R,\eta)$ has a multiplication making it into a monad. Thus the given factorisation of $f:X \to Y$ as
\[ X \xrightarrow{Lf} Kf \xrightarrow{Rf} Y \]
is into a map $Lf$ which has the structure of an $L$-coalgebra, followed by the map which has the structure of an $R$-algebra. In general $L$-coalgebras and $R$-algebras admit lifting conditions with respect to each other, that is to say, given a commutative square as on the left in
\[ \xygraph{{\xybox{\xygraph{!{0;(1.5,0):(0,1.3333)::} {A}="tl" [r] {C}="tr" [d] {D}="br" [l] {B}="bl" "tl":"tr"^-{u}:"br"^-{r}:@{<-}"bl"^-{v}:@{<-}"tl"^-{l}}}} [r(4)]
{\xybox{\xygraph{!{0;(1.5,0):(0,.6667)::} {A}="p1" [r] {C}="p2" [d] {Kr}="p3" [d] {D}="p4" [l] {B}="p5" [u] {Kl}="p6"
"p1":"p2"^-{u}:"p3"^-{Lr}:"p4"^-{Rr}:@{<-}"p5"^-{v}:@{<-}"p6"^-{Rl}:@{<-}"p1"^-{Ll}
"p5":@{.>}@<-1.5ex>"p6"_{\lambda}:@{.>}"p3"^-{K(u,v)}:@{.>}@<1.5ex>"p2"^{\rho}}}}} \]
together with an $L$-coalgebra structure $\lambda$ on $l$ and an $R$-algebra structure $\rho$ on $r$, one constructs the diagonal filler as on the right in the previous display. Thus everything works as with the usual theory of weak factorisation systems, except that here instead of classes of left and right maps one has the categories of $L$-coalgebras and $R$-algebras, and the liftings are constructed as shown above using the algebraic data. In fact as explained in \cite{Garner-SmallObjectArgument}, every algebraic weak factorisation system has an underlying weak factorisation system.

For any $X \in \ca E$, $L$ restricts to a comonad $L_X$ on the coslice $X/\ca E$ whose coalgebras are $L$-coalgebras with domain $X$. Similarly $R$ restricts to a monad $R_X$ on the slice $\ca E/X$ whose algebras are $R$-algebras with codomain $X$. In particular if $\ca E$ has an initial object $0$, then $L_0$ is a comonad on $\ca E$ whose coalgebras are objects $X$ of $\ca E$ together with the structure of an $L$-coalgebra on the unique map $0 \to X$. This is the algebraic analogue of cofibrant replacement, when the underlying weak factorisation system is the factorisation of morphisms of a Quillen model category into cofibrations followed by trivial fibrations. Dually when $\ca E$ has a terminal object $1$, the monad $R_1$ on $\ca E$ is an algebraic analogue of fibrant replacement, when the underlying weak factorisation system is the factorisation of morphisms of a Quillen model category into trivial cofibrations followed by fibrations.

Given a functorial factorisation $F = (L,R)$ with image denoted as $K$, since colimits in $\ca E^{[1]}$ and $\ca E^{[2]}$ are computed componentwise as in $\ca E$, the following statements are clearly equivalent for a given category $\ca C$:
\begin{itemize}
\item $F:\ca E^{[1]} \to \ca E^{[2]}$ preserves limits (resp. colimits) of functors from $\ca C$.
\item $L:\ca E^{[1]} \to \ca E^{[1]}$ preserves limits (resp. colimits) of functors from $\ca C$.
\item $R:\ca E^{[1]} \to \ca E^{[1]}$ preserves limits (resp. colimits) of functors from $\ca C$.
\item $K:\ca E^{[1]} \to \ca E$ preserves limits (resp. colimits) of functors from $\ca C$.
\end{itemize}
When any of these conditions is satisfied, we say that the functorial factorisation preserves the given limit or colimit. In particular given a regular cardinal $\lambda$ an algebraic weak factorisation system is said to be \emph{$\lambda$-ary} when its underlying functorial factorisation preserves $\lambda$-filtered colimits, and \emph{finitary} in the case $\lambda = \aleph_0$ of filtered colimits. Note that for a finitary algebraic weak factorisation system $(L,R)$, the comonad $L_0$ and the right monad $R_1$ are both finitary. Below we shall exhibit $\tn{Cont}_{\leq n}$ as the monad $R_1$ for a finitary algebraic weak factorisation system $(L,R)$ on $\NColl{\ca T_{\leq n}}$.

Since $0 \in \tn{FF}(\ca E)$ has underlying monad the identity, it preserves all limits and colimits as a functorial factorisation, and similarly for $1 \in \tn{FF}(\ca E)$ since its underlying comonad is the identity. Thus if we write $\tn{FF}(\ca E)'$ for the full subcategory of $\tn{FF}(\ca E)$ consisting of the functorial factorisations preserving some class of limits and colimits, then the duoidal structure of $\tn{FF}(\ca E)$ restricts to $\tn{FF}(\ca E)'$. So the algebraic weak factorisation system's which preserve the limits and colimits in the given class could equally well be regarded as bialgebras in $\tn{FF}(\ca E)'$.

We conclude this section with an observation that will be of use in section(\ref{ssec:ncatopreview-smallobj}).
\begin{lemma}\label{lem:pres-FF-from-opfib}
Let $P:\ca A \to \ca B$ be an opfibration, and let $F = (L,R)$ be the functorial factorisation on $\ca A$ of a map into a cocartesian arrow for $P$ followed by a vertical arrow (ie one sent to an identity by $P$). Then $F$ preserves any colimit that $P$ preserves.
\end{lemma}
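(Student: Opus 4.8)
The plan is to reduce to the image functor $K$ and then compute its action on cocones via the universal property of cocartesian lifts. By the four equivalent conditions recorded just before the lemma, $F$ preserves a given colimit iff its image functor $K : \ca A^{[1]} \to \ca A$ does, so it suffices to show that $K$ preserves any $\ca C$-colimit that $P$ preserves. Recall that for $f : X \to Y$ the object $Kf$ is the codomain of the chosen cocartesian lift $Lf : X \to Kf$ of $Pf$ at $X$, with $Rf : Kf \to Y$ the induced vertical map. The universal property of this cocartesian arrow is precisely that precomposition with $Lf$ together with application of $P$ gives a bijection, natural in $Z \in \ca A$,
\[ \ca A(Kf, Z) \;\iso\; \{(h,w) : h : X \to Z \tn{ in } \ca A,\ w : PY \to PZ \tn{ in } \ca B,\ Ph = w \comp Pf\}, \]
sending $k$ to $(k \comp Lf, Pk)$. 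I would take this representation as the engine of the proof.

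Next, fix a diagram $D : \ca C \to \ca A^{[1]}$ with $Dc = (f_c : X_c \to Y_c)$ and a colimiting cocone whose components are the squares $(\alpha_c, \beta_c)$, with colimit $f : X \to Y$. Since colimits in $\ca A^{[1]}$ are pointwise, $(X,\alpha_c) = \colim_c X_c$ and $(Y,\beta_c) = \colim_c Y_c$ in $\ca A$; I assume $P$ preserves these two $\ca C$-colimits, so that also $(PX, P\alpha_c) = \colim_c PX_c$ and $(PY, P\beta_c) = \colim_c PY_c$ in $\ca B$. Using the bijection above for each $f_c$, together with the description of $K$ on a connecting square $(a,b)$, namely $K(a,b) \comp Lf_c = Lf_{c'} \comp a$ and $P K(a,b) = Pb$, a cocone $\{\gamma_c : Kf_c \to Z\}$ unpacks into a cocone $\{h_c := \gamma_c \comp Lf_c : X_c \to Z\}$ under $(X_c)$ together with a cocone $\{w_c := P\gamma_c : PY_c \to PZ\}$ under $(PY_c)$, subject to the pointwise constraints $w_c \comp Pf_c = Ph_c$.

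Finally, I would reassemble this data. The cocone $\{h_c\}$ corresponds to a unique $h : X \to Z$, since $X = \colim_c X_c$, and the cocone $\{w_c\}$ to a unique $w : PY \to PZ$, here using that $P$ preserves $\colim_c Y_c$. It then remains to check that the pointwise constraints $w_c \comp Pf_c = Ph_c$ are equivalent to the single equation $Ph = w \comp Pf$: one direction is precomposition with the cocone maps, and the converse uses that $w \comp Pf$ and $Ph$ agree after precomposing with every $P\alpha_c$ and that $(PX, P\alpha_c) = \colim_c PX_c$. Thus cocones under $(Kf_c)$ with vertex $Z$ are naturally identified with pairs $(h,w)$ satisfying $Ph = w \comp Pf$, i.e.\ with $\ca A(Kf,Z)$; a short compatibility check confirms that this identification is exactly precomposition with the induced cocone $\{K(\alpha_c,\beta_c) : Kf_c \to Kf\}$, so $Kf$ is the colimit and $K$ preserves it.

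The main obstacle is the constraint-matching step in the last paragraph: passing from the family $w_c \comp Pf_c = Ph_c$ to the global equation $Ph = w \comp Pf$ is exactly where the hypothesis that $P$ preserves the colimit of the domain diagram $(X_c)$ is indispensable, since it is uniqueness of maps out of $\colim_c PX_c$ that forces the two composites to coincide. Everywhere else only the componentwise nature of arrow-category colimits and the cocartesian universal property are in play, so I expect the write-up to be short once this point is isolated.
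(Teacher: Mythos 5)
Your argument is correct: the reduction to the image functor $K$ is legitimate (it is one of the four equivalent conditions recorded just before the lemma), the representation $\ca A(Kf,Z) \iso \{(h,w) : Ph = w\comp Pf\}$ is exactly the universal property of the chosen (strongly) cocartesian lift, and your constraint-matching step correctly isolates the one place where preservation of the domain colimit $\colim_c PX_c$ is indispensable. The paper proves the statement by a shorter, more structural route: it invokes Street's characterisation of an opfibration as a left adjoint $\pi : P/\ca B \to \ca A$ over $\ca B$ to the canonical functor $i_P : \ca A \to P/\ca B$, observes that the image functor of the cocartesian--vertical factorisation is the composite $\pi P'$, where $P' : \ca A^{[1]} \to P/\ca B$ sends $\alpha$ to $(\tn{dom}\,\alpha, P\alpha, \tn{cod}\,\alpha)$, and concludes because the forgetful functor $P/\ca B \to \ca A \times \ca B$ creates the colimits preserved by $P$ (so $P'$ preserves them, colimits in $\ca A^{[1]}$ being componentwise) while $\pi$, as a left adjoint, preserves all colimits. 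The two proofs are the same fact seen at different levels of abstraction: your representability bijection for $\ca A(Kf,Z)$ is precisely the adjunction isomorphism for $\pi \ladj i_P$ at the object $(X,Pf,PY)$, and your cocone-matching computation unwinds by hand the statement that the comma-category forgetful functor creates the relevant colimits. What your version buys is independence from the adjoint characterisation of opfibrations, using only the elementwise universal property of cocartesian arrows; what the paper's version buys is brevity and the automatic handling of the cocone compatibilities, which in your write-up still require the (routine but necessary) verifications that the maps $\gamma_c$ induced by the pairs $(h_c,w_c)$ assemble into a cocone and that the resulting bijection is indeed precomposition with the candidate cocone $K(\alpha_c,\beta_c)$.
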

\begin{proof}
For any functor $P:\ca A \to \ca B$ one has the functor $i_P:\ca A \to P/\ca B$ given on objects by $PA = (A,1_{PA},PA)$, and the structure of an opfibration amounts to a left adjoint $\pi:P/\ca B \to \ca A$ to $i_P$ over $\ca B$ (see \cite{Street-FibrationIn2cats} for instance). One also has the canonical functor
\[ \begin{array}{lccr} {P' : \ca A^{[1]} \to P/\ca B} &&& {A \xrightarrow{\alpha} A' \,\, \mapsto \,\, (A,P\alpha,PA')} \end{array} \]
and the image for the cocartesian-vertical factorisation for $P$ is the composite $\pi P'$. Now the forgetful functor $P/\ca B \to \ca A \times \ca B$ creates all colimits that are preserved by $P$. Thus $P'$ preserves all colimits that $P$ does since colimits in $\ca E^{[1]}$ are formed componentwise as in $\ca E$, and so the result follows since $\pi$ as a left adjoint preserves all colimits.
\end{proof}

\subsection{Garner's small object argument}
\label{ssec:ncatopreview-smallobj}
The algebraic weak factorisation system's relevant for us will all be cofibrantly generated, in the sense that they all arise from Garner's small object argument, which we now recall in a special case of interest to us (for the general theory see \cite{Garner-SmallObjectArgument}). Suppose that $\ca E$ is locally finitely presentable and that a functor $J : \ca I \to \ca E^{[1]}$ is given, where $\ca I$ is a small category. Then Garner's small object argument produces an algebraic weak factorisation system on $\ca E$ in three steps. Before we begin to recall these, we invite the reader to keep in mind the more familiar special case when $\ca I$ is discrete, and so $J$ may be regarded as a set of maps
\[ J = \{s_i \xrightarrow{\phi_i} d_i \,\, : \,\, i \in \ca I\} \]
that one might call the ``generating cofibrations'' of the resulting algebraic weak factorisation system.

The first step is to take the left Kan extension of $J$ along itself to produce $L_1:\ca E^{[1]} \to \ca E^{[1]}$, which comes with a canonical comonad structure (comonads arising this way are commonly known as ``density'' comonads). The underlying copointed endofunctor $(L_1,\varepsilon_1)$ of this comonad does not satisfy the condition $s_{\ca E}\varepsilon_1 = \id_{s_{\ca E}}$ that would make it part of a functorial factorisation, and the point of the second step is to force this condition. Observe that $s_{\ca E} : \ca E^{[1]} \to \ca E$ is an opfibration, in which the cocartesian arrows are those whose underlying square in $\ca E$ is a pushout{\footnotemark{\footnotetext{Note that for any $\ca E$, $s_{\ca E}$ is a fibration, but what is important for us is that pushouts in $\ca E$ make $s_{\ca E}$ into an \emph{op}fibration.}}}. Then the second step is carried out by factoring the counit components $\varepsilon_{1,f} : L_1f \to f$ into
\[ L_1f \xrightarrow{c_f} L_2f \xrightarrow{\varepsilon_{2,f}} f \]
a cocartesian arrow followed by a vertical arrow for $s_{\ca E}$. In this way $(L_2,\varepsilon_2)$ satisfies the condition $s_{\ca E}\varepsilon_2 = \id_{s_{\ca E}}$ and so we have a functorial factorisation. Moreover $L_2$ inherits a comultiplication from that of $L_1$, making it into a comonad. What is still missing is the compatible monad structure on $R$, and the third step of the construction will supply this.

Before proceeding to describe this third step, let us note that the factorisation provided by the first two steps yields something very familiar in the case where $\ca I$ is discrete. For suppose that $f:X \to Y$ is in $\ca E$, then $L_1f$ and $L_2f$ are as indicated on the left in
\[ \xygraph{{\xybox{\xygraph{!{0;(4,0):(0,.4)::} {\smash{\coprod\limits_{i,\alpha,\beta}} s_i}="tl" [r] {X}="tr" [d] {Y}="br" [l] {\smash{\coprod\limits_{i,\alpha,\beta}} d_i}="bl" "tl"(:"tr"^-{(\alpha)}:"br"^-{f},[d(.25)] :"bl"_-{L_1f}:"br"_-{(\beta)}) "br" [u(.4)l(.4)] {P_f}="po" "bl":"po" "tr":"po"_-{L_2 f}:"br"^-{R_2f} "po" [u(.2)l(.1)] (-[r(.05)],-[d(.1)])}}} [r(5)]
{\xybox{\xygraph{{s_i}="tl" [r] {X}="tr" [d] {Y}="br" [l] {d_i}="bl" "tl":"tr"^-{\alpha}:"br"^-{f}:@{<-}"bl"^-{\beta}:@{<-}"tl"^-{\phi_i}:@{}"br"|-{=}}}}} \]
the summands are indexed over the set of triples $(i,\alpha,\beta)$ giving rise to a commutative square as indicated on the right in the previous display. So we have here what is usually regarded as the first step of Quillen's small object argument. In general, that is for $\ca I$ not necessarily discrete, the above coproducts will be replaced by more general colimits.

On completing the first two steps we have produced a functorial factorisation $(L_2,R_2)$ in which $L_2$ is a comonad, in other words we have a $\comp_l$-comonoid in $\tn{FF}(\ca E)$. The third step of Garner's small object argument is to take the free bialgebra on this comonoid, and this encapsulates the algebraic analogue of the transfinite part of Quillen's small object argument.

Let us see why this free bialgebra exists in our setting with $\ca E$ locally finitely presentable. First we note that since $\ca I$ is small and every object of $\ca E$ has rank, the functor $J$ factors through the inclusion $\ca E_{\lambda} \hookrightarrow \ca E$ of the $\lambda$-presentable objects where $\lambda$ is some regular cardinal. Thus $L_1$ is the left extension of its restriction to $\ca E_{\lambda}$, and so it preserves $\lambda$-filtered colimits. Since the opfibration $s_{\ca E} : \ca E^{[1]} \to \ca E$ preserves all colimits and thus $\lambda$-filtered ones in particular, by lemma(\ref{lem:pres-FF-from-opfib}) $L_2$ preserves $\lambda$-filtered colimits. So in fact $(L_2,R_2)$ is a $\comp_l$-comonoid in $\tn{FF}_{\lambda}(\ca E)$, the duoidal category of $\lambda$-ary functorial factorisations on $\ca E$.

In $\tn{FF}_{\lambda}(\ca E)$ the tensor products $\comp_l$ and $\comp_r$ are cocontinuous in one variable and $\lambda$-filtered colimit preserving in the other, since in each case these are just obtained by the composition of some kind of $\lambda$-ary functors. Because of the compatibility of these tensor products within the duoidal structure, the tensor product $\comp_r$ lifts to a tensor product on the category $\comp_l{\tn{-CoMon}}_{\lambda}(\ca E)$ of $\lambda$-ary $\comp_l$-comonoids, and since the forgetful functor $U_l : \comp_l{\tn{-CoMon}}_{\lambda}(\ca E) \to \tn{FF}_{\lambda}(\ca E)$ is strict monoidal and creates colimits, $\comp_r$ as a tensor product on $\comp_l{\tn{-CoMon}}_{\lambda}(\ca E)$ is also cocontinuous in one variable and $\lambda$-filtered colimit preserving in the other. Since $\tn{FF}_{\lambda}(\ca E)$ and thus $\comp_l{\tn{-CoMon}}_{\lambda}(\ca E)$ is evidently cocomplete, one can apply the construction of the free monoid on a pointed object from \cite{Kelly-Transfinite} to $(
L_2,R_2) \in \comp_l{\tn{-CoMon}}_{\lambda}(\ca E)$, to obtain the free bialgebra.

One reason we have discussed these details is to make the following observation.
\begin{lemma}\label{lem:fin-cofgen-algebraic weak factorisation system}
Suppose that $J:\ca I \to \ca E^{[1]}$ where $\ca E$ is locally finitely presentable, $\ca I$ is small and for all $i \in \ca I$, the domain and codomain of $J(i)$ is finitely presentable. Then the algebraic weak factorisation system obtained from $J$ via Garner's small object argument is finitary.
\end{lemma}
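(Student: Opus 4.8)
The plan is to run the three-step construction recalled above with the regular cardinal $\lambda$ chosen to be $\aleph_0$. In the general discussion $\lambda$ was taken merely large enough for $J$ to factor through the $\lambda$-presentable objects, while the rest of the argument -- the cocartesian-vertical factorisation of the second step and the free bialgebra of the third -- was developed for an arbitrary regular $\lambda$. Thus the whole content of the lemma is that the stated hypotheses license the choice $\lambda=\aleph_0$, which reduces to verifying that each $J(i)$ is finitely presentable as an object of $\ca E^{[1]}$, and then tracking finitariness through the two comonads $L_1$ and $L_2$ and the free monoid construction.

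I would first record the arrow-category presentability. As a functor category with colimits computed pointwise, $\ca E^{[1]}$ is again locally finitely presentable, and for an arrow $\phi\colon X\to Y$ the functor $\ca E^{[1]}(\phi,-)$ represented by $\phi$ sends $g$ to the set of commutative squares from $\phi$ to $g$; this is a finite limit built from the hom-functors $\ca E(X,-)$ and $\ca E(Y,-)$ precomposed with the domain and codomain functors. Since the domain and codomain functors preserve all colimits and $\ca E(X,-),\ca E(Y,-)$ preserve filtered colimits when $X,Y$ are finitely presentable, and finite limits commute with filtered colimits in $\Set$, it follows that $\phi$ is finitely presentable whenever $X$ and $Y$ are. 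By hypothesis this applies to every $J(i)$.

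Given this, the density comonad $L_1=\tn{Lan}_J J$ preserves filtered colimits: writing $L_1 f=\int^{i}\ca E^{[1]}(Ji,f)\cdot Ji$, each $\ca E^{[1]}(Ji,-)$ preserves filtered colimits by the previous paragraph, and the coend, being a colimit, commutes with filtered colimits in $f$. The functor $L_2$ of the second step is obtained by applying the image of the cocartesian-vertical factorisation for the opfibration $s_{\ca E}\colon\ca E^{[1]}\to\ca E$ to the counit components $\varepsilon_{1,f}\colon L_1 f\to f$; since $s_{\ca E}$ preserves all colimits, lemma(\ref{lem:pres-FF-from-opfib}) shows this image functor preserves filtered colimits, and the assignment $f\mapsto\varepsilon_{1,f}$ preserves them too because $L_1$ does and colimits in $(\ca E^{[1]})^{[1]}$ are pointwise; hence $L_2$ preserves filtered colimits. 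So $(L_2,R_2)$ is a $\comp_l$-comonoid in $\tn{FF}_{\aleph_0}(\ca E)$. The free bialgebra on it is then produced inside $\comp_l{\tn{-CoMon}}_{\aleph_0}(\ca E)$ by the free monoid construction of \cite{Kelly-Transfinite}, exactly as in the general discussion specialised to $\lambda=\aleph_0$; its underlying functorial factorisation therefore lies in $\tn{FF}_{\aleph_0}(\ca E)$, which is precisely the statement that the resulting algebraic weak factorisation system is finitary.

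The only genuinely new input is the passage of finite presentability to the arrow category; once $\lambda=\aleph_0$ is thereby justified, finitariness of $L_1$, of $L_2$ and of the free bialgebra are immediate specialisations of work already done for general $\lambda$. I expect the step most in need of care to be the transition from $L_1$ to $L_2$, where one must combine lemma(\ref{lem:pres-FF-from-opfib}) with the pointwise computation of colimits in $(\ca E^{[1]})^{[1]}$ to be sure that forcing the functorial-factorisation condition in the second step does not disturb filtered-colimit preservation.
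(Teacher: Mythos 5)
Your proposal is correct and follows essentially the same route as the paper: the paper's own proof is a one-sentence remark that under these hypotheses the $\lambda$ of the general discussion of Garner's small object argument may be taken to be $\aleph_0$, so that the third step stays within the finitary functorial factorisations. Your write-up simply spells out that specialisation, including the one genuinely needed verification (that an arrow with finitely presentable domain and codomain is finitely presentable in $\ca E^{[1]}$), which you carry out correctly via the pullback description of $\ca E^{[1]}(\phi,-)$ and the commutation of finite limits with filtered colimits in $\Set$.
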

\begin{proof}
In this case the $\lambda$ of the above discussion is $\aleph_0$ and so the third part of the construction stays within the category of finitary functorial factorisations.
\end{proof}
Note that in the general construction of the free bialgebra, one can think of it as taking place just within $\tn{FF}_{\lambda}(\ca E)$, ie one can forget about the $\comp_l$-comonoid structures, since $U_l$ is strict monoidal and creates colimits, so this extra structure is \emph{automatically} compatible with everything that is going on. It is also worth noting that the passage from $(L,R) \in \tn{FF}(\ca E)$ to $R_X$, for all $X \in \ca E$, preserves colimits and sends $\comp_r$ to composition of pointed endofunctors of $\ca E/X$, and so for the $(L,R)$ obtained from Garner's small object argument, $R_X$ is the free monad on the pointed endofunctor $(R_2)_X$.

The algebraic weak factorisation system obtained from $J:\ca I \to \ca E^{[1]}$ via Garner's small object argument is said to be \emph{cofibrantly generated} by $J$. For such algebraic weak factorisation systems one has a direct description of what an $R$-algebra structure on a given $f:X \to Y$ amounts to. In order to describe this let us denote by $i \mapsto s(i) \xrightarrow{\phi_i} d(i)$ the object map of $J$. Then to give an $R$-algebra structure on $f$ is to give a choice of diagonal fill
\[ \xygraph{!{0;(1.5,0):(0,.6667)::} {si}="tl" [r] {X}="tr" [d] {Y}="br" [l] {di}="bl" "tl":"tr"^-{\alpha}:"br"^-{f}:@{<-}"bl"^-{\beta}:@{<-}"tl"^-{\phi_i} "bl":@{.>}"tr"|-{\gamma(i,\alpha,\beta)}} \]
for every $(i,\alpha,\beta)$ such that $f\alpha = \beta\phi_i$; and these are compatible in the sense that given $\delta:j \to i$ in $\ca I$ and $\alpha$ and $\beta$ as above, one has
\[ \xygraph{!{0;(2.5,0):(0,.5)::} {sj}="p1" [r] {si}="p2" [r] {X}="p3" [d] {Y}="p4" [l] {di}="p5" [l] {dj}="p6"
"p1":"p2"^-{s\delta}:"p3"^-{\alpha}:"p4"^-{f}:@{<-}"p5"^-{\beta}:@{<-}"p6"^-{d\delta}:@{<-}"p1"^-{\phi_j} "p2":"p5"_(.35){\phi_i}
"p5":@{.>}"p3"|-{\gamma(i,\alpha,\beta)} "p6":@{.>}"p3"|(.3){\gamma(j,\alpha s(\delta),\beta d(\delta))}} \]
$\gamma(i,\alpha,\beta)d(\delta) = \gamma(j,\alpha s(\delta),\beta d(\delta))$. This compatibility condition is one of the novel features of the theory of algebraic weak factorisation systems, which owes its existence to the fact that one has a \emph{category} of ``generating cofibrations'' as opposed to a mere set. We shall use this feature below in section(\ref{sec:strictly-unital-weak-n-cats}) to express what it means for unital operations to be ``strict'' in our operadic definition of ``weak $n$-category with strict units''.

\subsection{$\ca T_{\leq n}$-collections}
\label{ssec:n-collection-review}
Up to this point in this article we have been using exclusively the general definitions of $\ca T_{\leq n}$-collections (resp. operads) as cartesian natural transformations (resp. cartesian monad morphisms) into $\ca T_{\leq n}$. The advantage of this is that all the combinatorial aspects of pasting diagrams are neatly packaged into a single very well-behaved object -- the monad $\ca T_{\leq n}$ on $\ca G^n(\Set)$ for strict $n$-categories. We now, for both the convenience of the reader and to set our notation and terminology, recall some of these combinatorial aspects.

We denote also by $\tn{Tr}_{\leq n}$ the $n$-globular set $\ca T_{\leq n}(1)$. This is the free strict $n$-category on the terminal $n$-globular set, the set of $k$-cells of which are denoted as $\tn{Tr}_k$, whose elements can be regarded as $k$-stage trees in the sense of \cite{Batanin-MonGlobCats}, a $k$-stage tree $p$ being a sequence
\[ \xygraph{!{0;(1.5,0):} {p^{(k)}}="p1" [r] {p^{(k-1)}}="p2" [r] {...}="p3" [r] {p^{(1)}}="p4"  [r] {p^{(0)}}="p5"
"p1":"p2"^-{\xi_k}:"p3"^-{}:"p4"^-{}:"p5"^-{\xi_1}} \]
in $\Delta_+$ such that $p^{(0)}=1$. An element $x \in p^{(r)}$ is a \emph{node} of $p$ whose \emph{height} is $r$, the element $y=\xi_r...\xi_{s+1}(x)$ is $x$'s \emph{ancestor} of height $s$, $x$ is said to be a \emph{descendant} of $y$, and a node $x$ is said to be a \emph{leaf} when it doesn't have any descendants. The leaves of a given tree $p$ have a linear order determined by the order on the $p^{(r)}$'s and the $\xi_r$'s. For $k > 0$ the source and target functions $s,t:\tn{Tr}_{k+1} \to \tn{Tr}_k$ are equal, they correspond to forgetting the nodes at height $(k+1)$, that is to say ``truncation to height $k$'', and so we write $s=\tn{tr}=t$. For $k < n$ any $k$-stage tree $p$ can be regarded as a $(k{+}1)$-stage tree $zp$ such that $\tn{tr}(zp) = p$ and $(zp)^{(k+1)}=0$, and one has an inclusion $z : \tn{Tr}_{k} \hookrightarrow \tn{Tr}_{k{+}1}$. The $k$-stage tree with exactly one leaf at height $k$ is denoted as $U_k$, and so the $k$-stage trees with exactly one leaf are those of the 
form $z^sU_{k{-}s}$. Such trees are said to be \emph{linear}.

Trees in the sense just described define the $n$-globular sets to be regarded as globular pasting schemes. In \cite{Batanin-MonGlobCats} the notation $p^*$ was used for the globular set determined by the tree $p$. From a general perspective the assignation of a globular set from a tree is the object map of a functor $\tn{el}(\tn{Tr}_{\leq n}) \to \ca G^n(\Set)$ ($\tn{el}(\tn{Tr}_{\leq n})$ is the category of elements of the $n$-globular set $\tn{Tr}_{\leq n}$) which comes from the fact that $\ca T_{\leq n}$ is a local right adjoint monad on a presheaf category -- so the trees/globular pasting diagrams are the canonical arities for $\ca T_{\leq n}$, see \cite{Weber-Fam2fun} \cite{BergMellWeber-MonadsArities}. For an explicit description of this operation, see \cite{Batanin-MonGlobCats} or \cite{Weber-Generic}. We shall omit the $(-)^*$ and just refer to the $n$-globular set $p$. In particular the $n$-globular sets $p$ and $zp$ coincide.

The category $\Coll {\ca T_{\leq n}}$ of $\ca T_{\leq n}$-collections admits two useful reformulations because of the equivalences of categories
\[ \Coll {\ca T_{\leq n}} \catequiv \ca G^n(\Set)/\tn{Tr}_{\leq n} \catequiv [\tn{el}(\tn{Tr}_{\leq n})^{\op},\Set] \]
the first of which is given by evaluating at $1$, and the second is the standard description of a slice of a presheaf category as a presheaf category. Given $p \in \tn{Tr}_k$ where $k \leq n$, the set $A_p$ is just the fibre over $p \in \tn{Tr}_k$ of $(\alpha_1)_k : A(1)_k \to \tn{Tr}_k$. When $A$ underlies an operad, $a \in A_p$ is an operation which in any $A$-algebra composes such a pasting diagram to a $k$-cell. By the Yoneda lemma a $k$-stage tree $p$ may also be regarded as a morphism $p:k \to \tn{Tr}_{\leq n}$ of $n$-globular sets, and these are exactly the representables when regarding $\Coll{\ca T_{\leq n}}$ as a presheaf category. For $k > 0$ one has cosource and cotarget morphisms $\sigma,\tau:(k{-}1) \to k$ of representable globular sets, giving cosource and cotarget maps $\sigma_p,\tau_p:\tn{tr}(p) \to p$ in $\Coll{\ca T_{\leq n}}$, since in $\ca G^n(\Set)/\tn{Tr}_{\leq n}$ one has $\tn{tr}(p)=p\sigma=p\tau$.

To say that a $\ca T_{\leq n}$-collection is over $\Set$ is to say that for the corresponding $\alpha:A \to \tn{Tr}_{\leq n}$, the fibre over $U_0$ is singleton, that is that $A_{U_0}$ is singleton. The category $\NColl{\ca T_{\leq n}}$ is also a presheaf category
\[ \NColl{\ca T_{\leq n}} \catequiv [\ca N_{\leq n}^{\op},\Set] \]
where $\ca N_{\leq n}$ is the full subcategory of $\tn{el}(\tn{Tr}_{\leq n})$ consisting of those trees $p \neq U_0$. Right Kan extension along the inclusion $i:\ca N_{\leq n} \hookrightarrow \tn{el}(\tn{Tr}_{\leq n})$ corresponds to the inclusion $\NColl{\ca T_{\leq n}} \hookrightarrow \Coll{\ca T_{\leq n}}$, and restriction along $i$ applied to $A$ amounts to replacing $A_{U_0}$ with a singleton. So the trees $p \neq U_0$ are also representables in $\NColl{\ca T_{\leq n}}$, but viewed in $\ca G^n(\Set)/\tn{Tr}_{\leq n}$ this will be different from $p:k \to \tn{Tr}_{\leq n}$ in that the two 0-cells of the $n$-globular set $k$ will have been identified. Despite this difference we shall, when there is no risk of confusion, denote by $p$ this representable object of $\NColl{\ca T_{\leq n}}$. Thus for a $\ca T_{\leq n}$-operad $A$ over $\Set$, one may may regard an element of $a \in A_p$, that is to say an operation $a$ of $A$ of arity $p$, as a morphism $a:p \to A$ in $\NColl{\ca T_{\leq n}}$ by the Yoneda 
lemma. Similarly, for $k$-stage trees $p$ where $k>1$, one has $\sigma_p,\tau_p:\tn{tr}(p) \to p$ in $\NColl{\ca T_{\leq n}}$.

\subsection{$\ca T_{\leq n}$-operads with chosen contractions}
\label{ssec:operads-with-chosen-contractions}
The (Leinster-)contractibility of a $\ca T_{\leq n}$-operad is a condition on the underlying collection. A $\ca T_{\leq n}$-collection is contractible iff the corresponding morphism $\alpha:A \to \tn{Tr}_{\leq n}$ of $n$-globular sets satisfies the right lifting property (right lifting property) with respect to the set
\[ \ca I_{\leq n} = \{\partial(k) \xrightarrow{\phi_k} k \,\, : \,\, 0 \leq k \leq n\} \cup \{\partial(n{+}1) \xrightarrow{\phi_n'} n\}. \]
To equip the $\ca T_{\leq n}$-collection with \emph{chosen contractions} is by definition to give choices of liftings that witness $\alpha$'s right lifting property.

As such, possessing chosen contractions is exactly the structure of an $R$-algebra on $\alpha$, where $(L,R)$ is the algebraic weak factorisation system on $\ca G^n(\Set)$ cofibrantly generated by $\ca I_{\leq n}$. As observed in \cite{Garner-LeinsterDefinition}, possessing chosen contractions may be similarly identified from a cofibrantly generated algebraic weak factorisation system on $\NColl{\ca T_{\leq n}}$. One has
\[ \xygraph{!{0;(3,0):(0,1)::} {\NColl{\ca T_{\leq n}}}="l" [r] {\Coll{\ca T_{\leq n}}}="m" [r] {\ca G^n(\Set),}="r"
"l":@<-1ex>"m"_-{\tn{ran}_i}|-{}="bot":"r"^-{\tn{dom}} "l":@<1ex>@{<-}"m"^-{\tn{res}_i}|-{}="top" "top":@{}"bot"|-{\perp}} \]
one defines $\ca I_{\leq n}'$ to be the set of morphisms $\phi$ of $\Coll{\ca T_{\leq n}}$ such that $\tn{dom}(\phi) \in \ca I_{\leq n}$, and then $\ca I_{\leq n}''$ is the set of morphisms $\tn{res}_i(\phi)$ of $\NColl{\ca T_{\leq n}}$ such that $\phi \in \ca I_{\leq n}'$. Denote by $(L',R')$ and $(L'',R'')$ the algebraic weak factorisation systems cofibrantly generated by $\ca I_{\leq n}$ and $\ca I_{\leq n}''$ respectively. Let $B \in \NColl{\ca T_{\leq n}}$ such that $\tn{ran}_i(B)$ corresponds to $\alpha$. Then by the definition of $\ca I_{\leq n}'$, an $R$-algebra structure on $\alpha$ amounts to an $R'$-algebra structure on the unique map $\tn{ran}_i(B) \to 1$, which in turn amounts to an $R''$-algebra structure on $B \to 1$ by the definition of $\ca I_{\leq n}''$.

Moreover note that the boundaries $\partial k$ of representables are clearly finite colimits of representables and so are finitely presentable objects of $\ca G^n(\Set)$. Thus $(L,R)$ is a finitary algebraic weak factorisation system. The objects of a slice $\ca E/X$ whose domains are finitely presentable are finitely presentable in $\ca E/X$, thus the domains and codomains of $\phi \in \ca I_{\leq n}'$ are finitely presentable, whence $(L',R')$ is also finitary. The functor $\tn{ran}_i$ is clearly finitary -- this is most easily seen by regarding it as the inclusion, thus its left adjoint preserves finitely presentable objects, whence $(L'',R'')$ is also a finitary algebraic weak factorisation system. We denote by $\tn{Cont}_{\leq n}$ the monad $R''_1$, and we have proved
\begin{proposition}\label{prop:finitary-monad-for-chosen-contractions}
The monad $\tn{Cont}_{\leq n}$ on $\NColl{\ca T_{\leq n}}$ just described is finitary and its algebras are $\ca T_{\leq n}$-collections over $\Set$ equipped with chosen contractions.
\end{proposition}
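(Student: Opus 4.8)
The plan is to establish the two assertions — finitariness of $\tn{Cont}_{\leq n}$ and the identification of its algebras — by tracing the cofibrantly-generated algebraic weak factorisation system $(L'',R'')$ back through the two reformulations of contractibility, and then invoking the general facts about finitary algebraic weak factorisation systems recorded above. The whole argument is driven by the correspondence, built into the definitions of $\ca I_{\leq n}'$ and $\ca I_{\leq n}''$, between $R$-, $R'$- and $R''$-algebra structures.

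First I would verify that $(L'',R'')$ is finitary by applying Lemma \ref{lem:fin-cofgen-algebraic weak factorisation system}, which reduces the question to showing that the domains and codomains of the generating maps $\ca I_{\leq n}''$ are finitely presentable in $\NColl{\ca T_{\leq n}}$ — noting that $\NColl{\ca T_{\leq n}}$, being a presheaf category, is locally finitely presentable, and that $\ca I_{\leq n}''$ is indexed by a small discrete category. This I would do in three stages, mirroring the three categories involved. At the bottom, the boundaries $\partial(k)$ are finite colimits of representables and hence finitely presentable in $\ca G^n(\Set)$, so the domains and codomains of $\ca I_{\leq n}$ are finitely presentable and $(L,R)$ is finitary. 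Passing to the slice $\Coll{\ca T_{\leq n}} \catequiv \ca G^n(\Set)/\tn{Tr}_{\leq n}$, I would use that an object of a slice whose domain is finitely presentable is itself finitely presentable there, so the domains and codomains of $\ca I_{\leq n}'$ inherit finite presentability and $(L',R')$ is finitary. Finally, since $\tn{ran}_i$ is finitary, its left adjoint $\tn{res}_i$ preserves finitely presentable objects, whence the maps $\tn{res}_i(\phi)$ comprising $\ca I_{\leq n}''$ again have finitely presentable domains and codomains. The finitariness of $\tn{Cont}_{\leq n}$ then follows at once from the general remark that for a finitary algebraic weak factorisation system the monad $R_1$ is itself finitary, since $\tn{Cont}_{\leq n}$ is by definition $R''_1$.

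For the identification of algebras I would unwind the same chain of correspondences. An algebra for $\tn{Cont}_{\leq n} = R''_1$ is precisely an $R''$-algebra structure on a map $B \to 1$ in $\NColl{\ca T_{\leq n}}$; by the construction of $\ca I_{\leq n}''$ from $\ca I_{\leq n}'$ this corresponds to an $R'$-algebra structure on $\tn{ran}_i(B) \to 1$ in $\Coll{\ca T_{\leq n}}$, and by the construction of $\ca I_{\leq n}'$ from $\ca I_{\leq n}$ this corresponds in turn to an $R$-algebra structure on the associated map $\alpha : A \to \tn{Tr}_{\leq n}$ in $\ca G^n(\Set)$. Since an $R$-algebra structure on $\alpha$ is exactly a choice of liftings witnessing the right lifting property of $\alpha$ against $\ca I_{\leq n}$, and since the objects of $\NColl{\ca T_{\leq n}}$ are exactly the $\ca T_{\leq n}$-collections over $\Set$, this exhibits the $\tn{Cont}_{\leq n}$-algebras as $\ca T_{\leq n}$-collections over $\Set$ equipped with chosen contractions.

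The step I expect to require the most care is the transfer of finite presentability along the two changes of category: confirming that passing into the slice $\ca G^n(\Set)/\tn{Tr}_{\leq n}$ preserves finite presentability of the generating data, and that $\tn{res}_i$ does likewise — the latter resting on the finitariness of $\tn{ran}_i$, which becomes transparent once $\tn{ran}_i$ is recognised, up to equivalence, as the fully faithful inclusion $\NColl{\ca T_{\leq n}} \hookrightarrow \Coll{\ca T_{\leq n}}$. The identification of algebras is then essentially formal, provided the three-fold correspondence between $R$-, $R'$- and $R''$-algebra structures is arranged so that the lifting data match on the nose.
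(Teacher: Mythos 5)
Your proposal is correct and follows essentially the same route as the paper: finitariness is obtained by checking finite presentability of the generating data at each of the three stages ($\ca G^n(\Set)$, the slice $\Coll{\ca T_{\leq n}}$, and $\NColl{\ca T_{\leq n}}$ via the finitariness of $\tn{ran}_i$) and then invoking the general fact that $R''_1$ is finitary for a finitary algebraic weak factorisation system, while the identification of algebras is the same chain of correspondences between $R$-, $R'$- and $R''$-algebra structures built into the definitions of $\ca I_{\leq n}'$ and $\ca I_{\leq n}''$. No gaps.
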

\noindent In view of section(\ref{ssec:ncatopreview-overview}), this concludes our recollection of the definition of weak $n$-category.
\begin{remark}\label{rem:Bat-defn}
We have in this work used the definition of Leinster \cite{Leinster-HDA-book} for ease of exposition, and because its compatibility with the technology of algebraic weak factorisation systems has been well discussed in the literature \cite{Garner-LeinsterDefinition}. However one can, in a similar way, recast the original definition of \cite{Batanin-MonGlobCats} in similar terms, though working with strictly rather than weakly initial $\ca T_{\leq n}$-operads of the appropriate type. In this variant instead of $\NColl{\ca T_{\leq n}}$ one works with pointed collections, that is, with the coslice $1_{\ca G^n(\Set)}/\NColl{\ca T_{\leq n}}$. On this coslice there are three finitary monads whose algebras are $\ca T_{\leq n}$-operads, pointed $\ca T_{\leq n}$-collections equipped with a system of compositions, and pointed $\ca T_{\leq n}$-collections that are contractible in the sense of \cite{Batanin-MonGlobCats}, with the last of these obtained via a finitary algebraic weak factorisation system. The algebras of the coproduct of these monads are, in the language 
of \cite{Batanin-MonGlobCats}, exactly ``contractible normalised $n$-operads equipped with a system of compositions'', and the algebras of the initial such are weak $n$-categories in the sense of \cite{Batanin-MonGlobCats}.
\end{remark}

\subsection{Further remarks on generating cofibrations}
\label{ssec:overview-weak-units}
We analyse the sets $\ca I_{\leq n}$, $\ca I_{\leq n}'$ and $\ca I_{\leq n}''$ of generating cofibrations a little more, and develop some further notation and terminology for the sake of section(\ref{sec:strictly-unital-weak-n-cats}). The role of the ``extra'' arrow $\phi_n'$ in the definition of $\ca I_{\leq n}$ is that it ensures that for the resulting algebraic weak factorisation system $(L,R)$, the structure of $R$-algebra involves unique lifting against the map $\phi_n: \partial(n) \to n$. This is part of a general phenomenon first observed by Bousfield in section(4) of \cite{Bousfield-FactSystems}. We are grateful to Richard Garner for pointing us to this reference.
\begin{lemma}\label{lem:weak factorisation system->sfs}
(Bousfield \cite{Bousfield-FactSystems})
Let $\ca E$ be a category with pushouts, $\phi:S \to D$ in $\ca E$, and form $\phi'$ as follows:
\[ \xygraph{!{0;(2,0):(0,.5)::} {S}="tl" [r] {D}="tr" [d] {S'}="br" [l] {D}="bl" "tl":"tr"^-{\phi}:"br"^-{\tau}:@{<-}"bl"^-{\sigma}:@{<-}"tl"^-{\phi}:@{}"br"|-{\tn{po}}
"br" [dr] {D}="bbrr" "bl":@/_{1pc}/"bbrr"_{1_D} "tr":@/^{1pc}/"bbrr"^{1_D} "br":@{.>}"bbrr"|-{\phi'}} \]
Then $f:X \to Y$ satisfies the right lifting property with respect to $\phi$ and $\phi'$ iff it satisfies the unique right lifting property with respect to $\phi$.
\end{lemma}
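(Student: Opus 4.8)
The plan is to reduce the entire statement to a direct unwinding of the universal property of the pushout square defining $S'$, after which both implications become bookkeeping. First I would record the defining data. Since $S'$ is the pushout of $D \xleftarrow{\phi} S \xrightarrow{\phi} D$ with coprojections $\sigma,\tau : D \to S'$, a morphism $u' : S' \to X$ is the same thing as a pair $u_1,u_2 : D \to X$ with $u_1\phi = u_2\phi$, via $u_1 = u'\tau$ and $u_2 = u'\sigma$; conversely every such pair factors through a unique $u'$. The induced map $\phi' : S' \to D$ is characterised by $\phi'\tau = \phi'\sigma = 1_D$.

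Next I would translate a lifting problem for $f$ against $\phi'$ into these terms. A commutative square with top edge $u' : S' \to X$ and bottom edge $v' : D \to Y$ amounts, via the correspondence above, to a pair $u_1,u_2 : D \to X$ with $u_1\phi = u_2\phi$, together with the constraint $f u' = v'\phi'$; composing this constraint with $\tau$ and $\sigma$ and using $\phi'\tau = \phi'\sigma = 1_D$ shows it says precisely $f u_1 = v' = f u_2$. Thus a lifting problem against $\phi'$ is exactly a pair $u_1, u_2 : D \to X$ with $u_1\phi = u_2\phi$ and $f u_1 = f u_2$. A diagonal filler $d' : D \to X$ must satisfy $d'\phi' = u'$, and composing with $\tau,\sigma$ forces $d' = u_1$ and $d' = u_2$; hence a filler exists if and only if $u_1 = u_2$, in which case $d' = u_1$ is automatically the unique filler and does satisfy $f d' = v'$. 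This is the crux of the matter: $f$ has the right lifting property against $\phi'$ if and only if any two maps $D \to X$ that agree after composing with $\phi$ and after composing with $f$ are already equal — which is exactly the uniqueness half of the unique right lifting property against $\phi$.

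With this identification in hand both directions are immediate. For the forward implication, the unique right lifting property against $\phi$ contains the ordinary right lifting property against $\phi$ by definition, and its uniqueness clause is precisely the condition just shown to be equivalent to the right lifting property against $\phi'$; so $f$ has the right lifting property against both $\phi$ and $\phi'$. Conversely, assume $f$ has the right lifting property against $\phi$ and against $\phi'$. Existence of fillers for $\phi$-problems is the former. Given two fillers $d_1,d_2$ of a single $\phi$-problem one has $d_1\phi = d_2\phi$ and $f d_1 = f d_2$, so the latter (in the reformulated form) yields $d_1 = d_2$; hence fillers against $\phi$ exist and are unique, which is exactly the unique right lifting property against $\phi$.

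I expect no serious obstacle. The only point requiring genuine care is the translation in the second paragraph: one must check that the identities $\phi'\tau = \phi'\sigma = 1_D$ are what collapse the existence of a $\phi'$-filler to the single equality $u_1 = u_2$, and that the side condition $f u' = v'\phi'$ is equivalent to $f u_1 = f u_2$. Everything else is formal manipulation of the universal property of the pushout.
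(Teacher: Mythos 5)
Your proof is correct. The paper itself gives no proof of this lemma (it is stated with only a citation to Bousfield), and your argument is the standard one: identifying maps out of the pushout $S'$ with pairs $u_1,u_2:D\to X$ agreeing on $\phi$, and observing that a $\phi'$-lifting problem has a (necessarily unique) solution exactly when $u_1=u_2$, which is precisely the uniqueness clause of the unique right lifting property against $\phi$. The only step you gloss over is that $d'=u_1$ really satisfies $d'\phi'=u'$, which follows because $\sigma$ and $\tau$ are jointly epic; this is immediate and not a gap.
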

Given a cocomplete category $\ca E$ satisfying either $(*)$ or $(\dagger)$ of \cite{Garner-SmallObjectArgument} section(4){\footnotemark{\footnotetext{These are general conditions under which Garner's small object argument works, and in particular are satisfied when $\ca E$ is locally finitely presentable.}}}, and that $J:\ca I \to \ca E^{[1]}$ with $\ca I$ small, and denote by $(L,R)$ the algebraic weak factorisation system cofibrantly generated by $J$. Suppose that a subset $\ca S$ of the objects of $\ca I$ are given, and regard $\ca S$ as a discrete subcategory of $\ca I$. Define $\ca I_{\ca S} = \ca I \coprod \ca S$ and $J_{\ca S}:{\ca I}_{\ca S} \to \ca E^{[1]}$ for the functor which agrees with $J$ on $\ca I$, and for $s \in \ca S$ is given by $J_{\ca S}(s) = J(s)'$ defined as in lemma(\ref{lem:weak factorisation system->sfs}). Denote by $(L_{\ca S},R_{\ca S})$ the algebraic weak factorisation system cofibrantly generated by $J_{\ca S}$. 
\begin{corollary}\label{cor:semi-strict-cofibgen-algebraic weak factorisation system}
For $\ca E$, $J:\ca I \to \ca E^{[1]}$ and $\ca S$ as just defined, an $R_{\ca S}$-algebra structure on $f:X \to Y$ in $\ca E$ is exactly an $R$-algebra structure together with the property that $f$ has the unique right lifting property with respect to the morphisms of $\ca S$.
\end{corollary}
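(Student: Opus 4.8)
The plan is to unwind the explicit description of $R$-algebra structures for a cofibrantly generated algebraic weak factorisation system, recalled at the end of section(\ref{ssec:ncatopreview-smallobj}), this time applied to the generating functor $J_{\ca S}:\ca I_{\ca S} \to \ca E^{[1]}$. By that description, an $R_{\ca S}$-algebra structure on $f:X \to Y$ amounts to a choice, for each object $i$ of $\ca I_{\ca S}$ and each square $(\alpha,\beta)$ with $f\alpha = \beta J_{\ca S}(i)$, of a diagonal filler $\gamma(i,\alpha,\beta)$, subject to the compatibility condition indexed by the morphisms $\delta:j \to i$ of $\ca I_{\ca S}$.

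First I would exploit that $\ca I_{\ca S} = \ca I \coprod \ca S$ with $\ca S$ discrete. A coproduct of categories with a discrete summand has no morphisms between the two summands and only identities within $\ca S$, so both the filler data and its compatibility conditions split into two independent parts. The part indexed by the objects and morphisms of the $\ca I$-summand, on which $J_{\ca S}$ agrees with $J$, is by definition exactly an $R$-algebra structure on $f$. The part indexed by $\ca S$ records, for each $s \in \ca S$, a choice of diagonal filler against $J_{\ca S}(s) = \phi_s'$, constrained only by the (vacuous) compatibility coming from identities.

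The next step is to recognise that this $\ca S$-part is a property rather than genuine structure. In the Bousfield construction of lemma(\ref{lem:weak factorisation system->sfs}), $\phi_s'$ is a split epimorphism, having $\sigma$ and $\tau$ as sections, hence an epimorphism, so a filler against $\phi_s'$ is unique whenever it exists. Thus the data of a chosen filler against $\phi_s'$ for every square is precisely the property that $f$ has the right lifting property against $\phi_s'$. Since the $R$-algebra structure already supplies fillers against $\phi_s$ (as $s$ is an object of $\ca I$), $f$ then has the right lifting property against both $\phi_s$ and $\phi_s'$, which by lemma(\ref{lem:weak factorisation system->sfs}) is exactly the unique right lifting property against $\phi_s$. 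Conversely, unique right lifting against $\phi_s$ gives the right lifting property against $\phi_s'$, and the epimorphism property of $\phi_s'$ then determines the $\ca S$-part fillers canonically. Assembling these two bijections yields the claimed correspondence.

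The main point requiring care is the interface between the two parts: I must check that no compatibility condition in the $R_{\ca S}$-structure secretly links the chosen filler against $\phi_s$, coming from the $\ca I$-summand, to the one against $\phi_s'$, coming from the $\ca S$-summand. This is precisely what the discreteness of $\ca S$ and the coproduct structure of $\ca I_{\ca S}$ rule out, and it is also why the construction forces no conflict: when $f$ has the unique right lifting property against $\phi_s$, the chosen $R$-algebra filler against $\phi_s$ is automatically the unique lift, consistent with the forced filler against $\phi_s'$. I expect verifying this clean separation, together with the epimorphism observation that turns the $\ca S$-data into a mere property, to be the crux of the argument, the rest being a direct reading-off of the two summands.
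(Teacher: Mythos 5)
Your proof is correct and takes essentially the route the paper intends: the corollary is stated there without proof, as an immediate consequence of lemma(\ref{lem:weak factorisation system->sfs}) together with the explicit description of $R$-algebra structures for cofibrantly generated algebraic weak factorisation systems recalled at the end of section(\ref{ssec:ncatopreview-smallobj}). Your decomposition of the lifting data along $\ca I_{\ca S} = \ca I \coprod \ca S$, and the observation that $\phi_s'$ is a split epimorphism so that the $\ca S$-indexed fillers are a property rather than structure, supply exactly the details the paper leaves implicit.
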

\begin{remark}\label{rem:discrete-strict-cofibrations}
In corollary(\ref{cor:semi-strict-cofibgen-algebraic weak factorisation system}) $\ca S$ was just a subset of $\ca I$ regarded as a discrete subcategory. There is an evident way to extend $(-)'$ of lemma(\ref{lem:weak factorisation system->sfs}) to morphisms of generating cofibrations, and then one could take $\ca S$ to be a full subcategory of $\ca I$. However this gives nothing different, because the uniqueness of the lifting conditions would imply that the compatibilities expressed by these extra morphisms of generating cofibrations would be automatically satisfied by any $\ca R_{\ca S}$-algebra. In fact, if one wished to be completely minimalistic, one could also remove any morphisms of $\ca I \subseteq \ca I_{\ca S}$ whose domains are in $\ca S$, because for the same reason the resulting cofibrantly generated algebraic weak factorisation system would be the same.
\end{remark}
Returning to $\ca I_{\leq n}$, the above discussion shows that instead of defining $\ca I_{\leq n}$ one can instead define
\[ \tilde{\ca I}_{\leq n} = \{\partial(k) \xrightarrow{\phi_k} k \,\, : \,\, 0 \leq k \leq n\} \]
and distinguish $\ca S = \{\phi_n\}$ so that $(\tilde{\ca I}_{\leq n})_{\ca S} = \ca I_{\leq n}$. Thus we introduce
\begin{terminology}
Let $J:\ca I \to \ca E^{[1]}$ and $\ca S$ be as in corollary(\ref{cor:semi-strict-cofibgen-algebraic weak factorisation system}). Then the elements of $\ca S$ are said to be strict, and the morphisms $J(s)$ for $s \in \ca S$ are said to be generating strict cofibrations. The algebraic weak factorisation system cofibrantly generated by $(\ca S,J)$ is defined to be the algebraic weak factorisation system cofibrantly generated by $J_{\ca S}$.
\end{terminology}
\noindent So as an alternative to defining $\ca I_{\leq n}$, we define instead $\tilde{\ca I}_{\leq n}$ in which $\phi_n$ is distinguished as being strict, and consider the algebraic weak factorisation system cofibrantly generated therefrom.

Similarly instead of considering $\ca I_{\leq n}'$, we define instead the set
\[ \tilde{\ca I}_{\leq n}' = \{\partial(p) \xrightarrow{\phi_p} p \,\, : \,\, p \in \tn{Tr}_k, \, 0 \leq k \leq n\} \]
of morphisms of $\Coll{\ca T_{\leq n}}$, where $p$ is regarded as a morphism $p:k \to \tn{Tr}_{\leq n}$ of $n$-globular sets by Yoneda, and thus as a collection, in fact the collections of this form are exactly the representables -- remembering that $\Coll{\ca T_{\leq n}}$ is a presheaf category. The $\ca T_{\leq n}$-collection $\partial(p)$ is by definition the composite
\[ \partial(k) \xrightarrow{\phi_k} k \xrightarrow{p} \tn{Tr}_{\leq n} \]
and so $\phi_k$ underlies the morphism $\phi_p:\partial(p) \to p$ of collections. Then we distinguish the $\phi_p$ for $p \in \tn{Tr}_n$ as being strict. In the same way, as an alternative to $\ca I_{\leq n}''$ we define
\[ \tilde{\ca I}_{\leq n}'' = \{\partial(p) \xrightarrow{\phi_p} p \,\, : \,\, p \in \tn{Tr}_k, \, 0 < k \leq n\} \]
of morphisms of $\NColl{\ca T_{\leq n}}$, again distinguishing $\phi_p$ for $p \in \tn{Tr}_n$ as strict. As before the $p$ here are representables in the presheaf category $\NColl{\ca T_{\leq n}}$, noting that $\tn{res}_i$ sends $U_0$ to the initial object and all the other representables to the representables of the same name in $\NColl{\ca T_{\leq n}}$, and so we define $\phi_p$ in $\NColl{\ca T_{\leq n}}$ for $p \neq U_0$ to be $\tn{res}_i(\phi_p)$. There is no harm in excluding the case $\tn{res}_i(\phi_{U_0})$ since this is the identity on the initial object, against which every map satisfies the unique right lifting property, and so its inclusion or exclusion has no effect on the resulting algebraic weak factorisation system.

Finally we note that $\tilde{\ca I}_{\leq n}$, $\tilde{\ca I}_{\leq n}'$ and $\tilde{\ca I}_{\leq n}''$ admit alternative inductive descriptions, intrinsic to $\ca G^n(\Set)$, $\Coll{\ca T_{\leq n}}$ and $\NColl{\ca T_{\leq n}}$, an analogue of which will be important for section(\ref{ssec:define-strictly-unital-weak-n-cat}). For $\tilde{\ca I}_{\leq n}$ the initial step is that $\partial(0)$ is initial and $\phi_0$ is the unique map $\partial(0) \to 0$. For $0 \leq k < n$, $\partial (k+1)$ and $\phi_{k+1}$ are defined by the pushout
\[ \xygraph{!{0;(2,0):(0,.5)::} {\partial k}="tl" [r] {k}="tr" [d] {\partial(k+1)}="br" [l] {k}="bl"
"tl":"tr"^-{\phi_k}:"br"^-{}:@{<-}"bl"^-{}:@{<-}"tl"^-{\phi_k}:@{}"br"|-{\tn{po}}
"br" [dr] {k+1}="bbrr" "bl":@/_{1pc}/"bbrr"_{\sigma} "tr":@/^{1pc}/"bbrr"^{\tau} "br":@{.>}"bbrr"|-{\phi_{k+1}}} \]
where $\sigma$ and $\tau$ are the cosource and cotarget maps from $\mathbb{G}_{\leq n}$ regarded as morphisms between representable $n$-globular sets. For $\tilde{\ca I}_{\leq n}'$ one defines $\phi_p:\partial(p) \to p$ for $p \in \tn{Tr}_k$ (regarded as a representable) by induction on $k$. The base case is that $\partial(U_0)$ is initial and $\phi_{U_0}$ is determined uniquely and the inductive step is indicated in
\[ \xygraph{!{0;(2,0):(0,.5)::} {\partial\tn{tr}(p)}="tl" [r] {\tn{tr}(p)}="tr" [d] {\partial(p)}="br" [l] {\tn{tr}(p)}="bl"
"tl":"tr"^-{\phi_{\tn{tr}(p)}}:"br"^-{\tau_p'}:@{<-}"bl"^-{\sigma_p'}:@{<-}"tl"^-{\phi_{\tn{tr}(p)}}:@{}"br"|-{\tn{po}}
"br" [dr] {p}="bbrr" "bl":@/_{1pc}/"bbrr"_{\sigma_p} "tr":@/^{1pc}/"bbrr"^{\tau_p} "br":@{.>}"bbrr"|-{\phi_p}} \]
where the cosource and target maps $\sigma_p$ and $\tau_p$ are obtained in the evident way from those for $\ca G^n(\Set)$ (ie using $\Coll{\ca T_{\leq n}} \catequiv \ca G^n(\Set)/\tn{Tr}_{\leq n}$ and Yoneda). The inductive definition of $\tilde{\ca I}_{\leq n}''$ within $\NColl{\ca T_{\leq n}}$ is described identically, except that the base case is that $\partial(p)$ is initial for all $p \in \tn{Tr}_1$.

\section{Weak $n$-categories with strict units}
\label{sec:strictly-unital-weak-n-cats}

\subsection{Overview}
\label{ssec:overview-weak-units}
It is natural to ask whether the lifting theorem gives us an inductive formulation of the notion weak $n$-category, as recalled above in section(\ref{sec:weak-n-cat-review}), via iterated enrichment. That is, one can ask whether there is a functor operad $\ca L_{\leq n}$ on the category $\ca G^n(\Set)^{\ca K_{\leq n}}$ of $\ca K_{\leq n}$-algebras, such that
\begin{equation}\label{eq:iterative-formula}
\Enrich {\ca L_{\leq n}} \iso \ca G^{n{+}1}(\Set)^{\ca K_{\leq {n{+}1}}}
\end{equation}
over $\ca G^{n{+}1}(\Set)$.  Note that given a $\ca T_{\leq n{+}1}$-operad $B$ over $\Set$, we can consider the unary part of its corresponding $\ca T^{\times}_{\leq n}$-multitensor, which is a $\ca T_{\leq n}$-operad, and denote this by $h(B)$. This is the object map of a functor
\[ h : \NOp{\ca T_{\leq n{+}1}} \longrightarrow \Op{\ca T_{\leq n}}.  \]
The operad $h(B)$ describes the structure that the homs of a $B$-algebra have. So what the lifting theorem does give us, when applied to $\ca K_{\leq n{+}1}$, is a functor operad $\ca L_{\leq n}$ on $\ca G^n(\Set)^{h(\ca K_{\leq n{+}1})}$ satisfying equation(\ref{eq:iterative-formula}). Thus our desired inductive formulation of the notion weak $n$-category would follow if
\begin{equation}\label{eq:structure-on-homs}
h(\ca K_{\leq n{+}1}) \iso \ca K_{\leq n}
\end{equation}
which from the point of view of algebras says that the structure that the homs of a weak $(n+1)$-category have is \emph{exactly} that of a weak $n$-category. In this section we shall see that the presence of weak units causes (\ref{eq:structure-on-homs}) to be false. However we shall give a notion of ``weak $n$-category with strict units'' by producing the operads $\ca K_{\leq n}^{\tn{(su)}}$ which describe such structures, which do verify the appropriate analogue of (\ref{eq:structure-on-homs}). Thus the lifting theorem produces the functor operads giving a formulation of weak $n$-categories with strict units via iterated enrichment.

\subsection{Weak units}
\label{ssec:weak units}
Equation(\ref{eq:structure-on-homs}) isn't true even in the case $n = 1$. Let $\ca B$ be a bicategory. Then by definition any hom $\ca B(x,y)$ of $\ca B$ is a category, whose objects are arrows $x \to y$, an arrow $f \to g$ of $\ca B(x,y)$ is a 2-cell $f \implies g$ in $\ca B$, and composition is given by vertical composition of 2-cells. However, this is not the only structure that $\ca B(x,y)$ has in general. For instance one can consider composition by the identities $1_x$ and $1_y$ giving endofunctors
\[ \begin{array}{lccr} {(-) \comp 1_x : \ca B(x,y) \to \ca B(x,y)} &&& {1_y \comp (-) : \ca B(x,y) \to \ca B(x,y),} \end{array} \]
and more complicated processes involving just composing with identities such as
\[ 1_y \comp ((1_y \comp 1_y) \comp (((-) \comp 1_x) \comp 1_x)) : \ca B(x,y) \to \ca B(x,y). \]
Thus one has a non-trivial monoid $M$ whose elements are such formal expressions and multiplication is given by substitution. In addition to all this the coherence isomorphisms of $\ca B$ give isomorphisms between all these endofunctors, and from the coherence theorem for bicategories, any diagram of such isomorphisms commutes.

Let us write $\tn{ch}:\Set \to \Cat$ for the right adjoint to the functor $\tn{ob} : \Cat \to \Set$ which sends a category to its set of objects. Given a set $X$, $\tn{ch}(X)$ is known as the indiscrete category on $X$, it has object set $X$ and exactly one arrow between any 2 objects. From the previous paragraph and the coherence theorem for bicategories, the hom of a bicategory is a category equipped with a strict action by the strict monoidal category $\tn{ch}(M)$. Thus in the case $n=1$, $\ca K_{\leq 2}$ is an operad whose algebras are bicategories, $\ca K_{\leq 1}$-algebras are categories, but $h(\ca K_{\leq 2})$-algebras are categories strictly acted on by $\tn{ch}(M)$. In particular note that the free $h(\ca K_{\leq 2})$-algebra on a graph $Z$ has underlying object set given by $Z_0 \times M$, and so $h(\ca K_{\leq 2})$ is not even over $\Set$.

\subsection{Reduced $\ca T_{\leq n}$-operads}
\label{ssec:reduced-operads}
Let $\alpha:A \to \ca T_{\leq n}$ be a $\ca T_{\leq n}$-collection. When $A$ underlies a $\ca T_{\leq n}$-operad and $p$ is a linear tree, then the elements of $A_p$ are unital operations of some type. For example an operation of arity $zU_0$ distinguishes a one cell $u_x:x \to x$ in any $A$-algebra $X$ for all $x \in X_0$. As another example, an operation of arity $z^2U_0$ will have source and target operations of arity $zU_0$, and so such an operation will distinguish one cells $u_x$ and $v_x:x \to x$ and 2-cells $u_x \to v_x$ in any $A$-algebra  $X$ for all $x \in X_0$. In general an operation of arity $z^rU_s$ distinguishes for each $s$-cell $x$ in an $A$-algebra $X$, a single $(r{+}s)$-cell $u_x$ whose $s$-dimensional source and target is $x$, and so the sources and targets of $u_x$ in dimensions between $s$ and $r{+}s$ will also be part of the data determined such an operation, and in any of these intermediate dimensions, these need not be the same cell.
\begin{definition}\label{def:reduced-operad}
Let $\alpha : A \to \ca T_{\leq n}$ be a $\ca T_{\leq n}$-collection. Then $\alpha$ is \emph{reduced} when for all linear trees $p \in \tn{Tr}_k$ where $k \leq n$, the set $A_p$ is singleton. A reduced $\ca T_{\leq n}$-operad is one whose underlying $\ca T_{\leq n}$-collection is reduced in this sense. We denote by $\RdColl{\ca T_{\leq n}}$ the full subcategory of $\Coll{\ca T_{\leq n}}$ consisting of the reduced $\ca T_{\leq n}$-collections, and by $\RdOp{\ca T_{\leq n}}$ the full subcategory of $\Op{\ca T_{\leq n}}$ consisting of the reduced $\ca T_{\leq n}$-operads.
\end{definition}
\noindent Thus for a reduced $\ca T_{\leq n}$-operad one has a unique operation of each ``unit type'' (i.e., of each arity $p$ where $p$ is a linear tree). Examples include the terminal operad $\ca T_{\leq n}$, the $\ca T_{\leq 2}$-operad for sesqui-categories, and the $\ca T_{\leq 3}$-operad for Gray categories, and one would expect that the (yet to be defined) operads describing the higher analogues of Gray categories to be of this form. To say that a $\ca T_{\leq n}$-collection $A$ is over $\Set$ is to say that $A_{U_0}$ is singleton, and so reduced $\ca T_{\leq n}$-collections and reduced $\ca T_{\leq n}$-operads are in particular over $\Set$.

The category $\RdColl{\ca T_{\leq n}}$ is a presheaf category, one has
\[ \RdColl{\ca T_{\leq n}} \catequiv [\ca R_{\leq n}^{\op},\Set] \]
where $\ca R_{\leq n}$ is the full subcategory of $\tn{el}(\tn{Tr}_{\leq n})$ consisting of those trees $p$ which are not linear. The inclusion $I:\RdColl{\ca T_{\leq n}} \hookrightarrow \Coll{\ca T_{\leq n}}$ can also be seen as the process of right Kan extending along the inclusion $i:\ca R_{\leq n} \hookrightarrow \tn{el}(\tn{Tr}_{\leq n})$, so its left adjoint $L$ is given by restriction along $i$. One has
\begin{equation}\label{diag:rdops-rdcolls}
\begin{aligned}
\xygraph{!{0;(2.5,0):(0,.5)::} {\RdOp{\ca T_{\leq n}}}="tl" [r] {\Op{\ca T_{\leq n}}}="tr" [d] {\Coll{\ca T_{\leq n}}}="br" [l] {\RdColl{\ca T_{\leq n}}}="bl" "tl":"tr"^-{J}:"br"^-{U}:@{<-}"bl"^-{I}:@{<-}"tl"^-{U^{\tn{(rd)}}} "tl":@{}"br"|-{\tn{pb}}}
\end{aligned}
\end{equation}
a pullback square in $\CAT$, where $I$ and $J$ are the inclusions, and $U$ and $U^{\tn{(rd)}}$ are the forgetful functors. Each of the categories in (\ref{diag:rdops-rdcolls}) are the categories of models of finite limit sketches and so are locally finitely presentable, and the square itself is induced by a square of morphisms of limit sketches. Thus each of the functors in (\ref{diag:rdops-rdcolls}) are finitary and monadic.

While not essential for this article, it is of interest that one has explicit descriptions of the left adjoints to each of the functors participating in (\ref{diag:rdops-rdcolls}). We already gave the explicit description of the left adjoint $L$ of $I$, and the construction of free operads as described in Appendix D of \cite{Leinster-HDA-book} gives an explicit description of the left adjoint $F$ of $U$. Given an explicit description $L'$ of $J$, $F^{\tn{(rd)}} = L'FI$ is then an explicit description of the left adjoint to $U^{\tn{(rd)}}$ as witnessed by the natural isomorphisms
\[ \begin{array}{rcl} {\RdOp{\ca T_{\leq n}}(L'FIA,B)} & {\iso} & {\Coll{\ca T_{\leq n}}(IA,UJB) = \Coll{\ca T_{\leq n}}(IA,IU^{\tn{(rd)}}B)} \\ 
{} & {\iso} & {\RdColl{\ca T_{\leq n}}(A,U^{\tn{(rd)}}B).} \end{array} \]

An explicit description of $L'$ follows from the work of Harvey Wolff \cite{Wolff-FreeMonads}. Theorem(2.1) of \cite{Wolff-FreeMonads} deals with the following situation. One has a bipullback square in $\CAT$ as on the left in
\begin{equation}\label{diag:Wolff-thm}
\begin{aligned}
\xygraph{{\xybox{\xygraph{{\ca C}="tl" [r] {\ca A}="tr" [d] {\ca B}="br" [l] {\ca D}="bl" "tl":"tr"^-{J}:"br"^-{U}:@{<-}"bl"^-{I}:@{<-}"tl"^-{V}:@{}"br"|-{\tn{bipb}}}}} [r(3)]
{\xybox{\xygraph{{FU}="tl" [r] {1_{\ca A}}="tr" [d] {P}="br" [l] {FILU}="bl" "tl":"tr"^-{\varepsilon}:"br"^-{\pi}:@{<-}"bl"^-{}:@{<-}"tl"^-{F\eta L}:@{}"br"|-{\tn{po}}}}}}
\end{aligned}
\end{equation}
where $\ca A$ has pushouts, $I$ and $J$ are fully faithful and regarded as inclusions, $I$ has left adjoint $L$ with unit $\eta$, and $U$ has left adjoint $F$ with counit $\varepsilon$. The conclusion of Wolff's theorem is that the pointed endofunctor $(P,\pi)$, constructed via the pushout as on the right in the previous display, is well-pointed (i.e., $P\pi = \pi P$) and its algebras are exactly the objects of $\ca C$ (i.e., those in the image of $J$). Thus in particular when $\ca A$ is locally finitely presentable and $U$ and $I$ are finitary, the left adjoint $L'$ of $J$ is constructed as the free monad on $(P,\pi)$ and is finitary. Then by virtue of well-pointedness, $L'$ is constructed as the colimit of the sequence
\[ \xygraph{{1}="p1" [r] {P}="p2" [r] {P^2}="p3" [r] {...}="p4" "p1":"p2"^-{\pi}:"p3"^-{P\pi}:"p4"^-{P^2\pi}} \]
-- see \cite{Kelly-Transfinite}.

One can apply Wolff's theorem in the case (\ref{diag:rdops-rdcolls}) since $U$ being monadic is an isofibration, and so the pullback of (\ref{diag:rdops-rdcolls}) is also a bipullback. However we shall obtain a more basic description of $L'$. Denote by $\tn{LinTr}_{\leq n} \hookrightarrow \tn{Tr}_{\leq n}$ subobject of $\tn{Tr}_{\leq n}$ consisting of the linear trees. Clearly these trees are closed under substitution of trees, and so $\tn{LinTr}_{\leq n}$ underlies a $\ca T_{\leq n}$-operad which we denote as $\tn{RGr}_{\leq n}$. To give an $n$-globular set $X$ an $\tn{RGr}_{\leq n}$-algebra structure, is to give common sections $z_k:X_k \to X_{k{+}1}$ for its source and target maps $s,t:X_{k{+}1} \to X_k$ for $0 \leq k < n$. That is, $\tn{RGr}_{\leq n}$-algebras are exactly reflexive $n$-globular sets. Both as a $\ca T_{\leq n}$-collection and as a $\ca T_{\leq n}$-operad, $\tn{RGr}_{\leq n}$ is \emph{subterminal}, that is to say, the unique map $\rho:\tn{RGr}_{\leq n} \to \ca T_{\leq n}$ into the terminal 
collection/operad is a monomorphism. For any $\ca T_{\leq n}$-operad $A$, the operations of the $\ca T_{\leq n}$-operad $A \times \tn{RGr}_{\leq n}$ are exactly the operations of $A$ whose arities are linear trees, and the projection $p_A : A \times \tn{RGr}_{\leq n} \to \tn{RGr}_{\leq n}$ assigns these operations to their arities. Thus to say that $A$ is reduced is to say that $p_A$ is an isomorphism, which amounts to saying that one has a bipullback square as on the left in
\[ \xygraph{{\xybox{\xygraph{!{0;(3,0):(0,.333)::} {\RdOp{\ca T_{\leq n}}}="tl" [r] {\Op{\ca T_{\leq n}}}="tr" [d] {\Op{\ca T_{\leq n}}/\tn{RGr}_{\leq n}}="br" [l] {1}="bl"
"tl":"tr"^-{J}:"br"^-{\Delta_{\tn{RGr}_{\leq n}}}:@{<-}"bl"^-{\ulcorner 1_{\tn{RGr}_{\leq n}}\urcorner}:@{<-}"tl"^-{}:@{}"br"|-{\tn{bipb}}}}} [r(4.75)u(.08)]
{\xybox{\xygraph{!{0;(2,0):(0,.5)::} {A \times \tn{RGr}_{\leq n}}="tl" [r] {A}="tr" [d] {PA}="br" [l] {\tn{RGr}_{\leq n}}="bl" "tl":"tr"^-{q_A}:"br"^-{\pi_A}:@{<-}"bl"^-{}:@{<-}"tl"^-{p_A}:@{}"br"|-{\tn{po}}}}}} \]
where $\Delta_{\tn{RGr}_{\leq n}}$ is given by pulling back along $\rho$. Applying Wolff's theorem to this situation, the left adjoint $L'$ to $J$ is the free monad on the pointed endofunctor $(P,\pi)$ constructed as on the right in the previous display, where $p_A$ and $q_A$ are the product projections. We are indebted to Steve Lack for pointing out that our explicit construction of $L'$ is an instance of Wolff's theorem, thereby enabling us to describe this construction here more efficiently.

\subsection{Defining weak $n$-categories with strict units}
\label{ssec:define-strictly-unital-weak-n-cat}
While a reduced $\ca T_{\leq n}$-operad has unique operations of each unit type, there is no guarantee that these really act as units with respect to the other operations. In this section we formalise this aspect, and then by mimicking the approach to defining weak $n$-categories in section(\ref{ssec:ncatopreview-overview}), we shall define weak $n$-categories with strict units. The main idea is to work with a richer notion of collection, namely one that remembers the unit operations of a $\ca T_{\leq n}$-operad, and then define a stronger notion of contractibility for such collections, by isolating an appropriate algebraic weak factorisation system.

Let us first recall, for the convenience of the reader and to fix notation, substitution for $\ca T_{\leq n}$-operads. For a $\ca T_{\leq n}$-operad $\alpha:A \to \ca T_{\leq n}$, the substitution is encoded by the multiplication of the monad $A$. The object $A \comp A$ of $\Coll{\ca T_{\leq n}}$ is the composite
\[ A^2 \xrightarrow{\alpha^2} \ca T_{\leq n}^2 \xrightarrow{\mu} \ca T_{\leq n} \]
where $\mu$ here is the multiplication of the monad $\ca T_{\leq n}$, and then $A$'s multiplication underlies a morphism $A \comp A \to A$ in $\Coll{\ca T_{\leq n}}$. To book-keep what is going on at the level of the sets of operations of $A$, it is convenient to use the notion of morphism of $k$-stage trees.

Given $k$-stage trees $p$ and $q$, a morphism $f:p \to q$ consists of functions $f^{(i)}$ as shown
\[ \xygraph{!{0;(1.5,0):(0,.6667)::} {p^{(k)}}="p1" [r] {p^{(k-1)}}="p2" [r] {...}="p3" [r] {p^{(1)}}="p4"  [r] {p^{(0)}}="p5"
"p1" [d] {q^{(k)}}="q1" [r] {q^{(k-1)}}="q2" [r] {...}="q3" [r] {q^{(1)}}="q4"  [r] {q^{(0)}}="q5"
"p1":"p2"^-{\partial_k}:"p3"^-{}:"p4"^-{}:"p5"^-{\partial_1} "q1":"q2"_-{\partial_k}:"q3"_-{}:"q4"_-{}:"q5"_-{\partial_1}
"p1":"q1"_{f^{(k)}} "p2":"q2"_{f^{(k{-}1)}} "p4":"q4"^{f^{(1)}} "p5":"q5"^{f^{(0)}}} \]
making the above diagram commute serially (in $\Set$), and such that for $0 < i \leq k$, $f^{(i)}$ is order preserving on the fibres of $\partial_i$. In this way one has a category $\Omega_k$, whose set of objects is $\tn{Tr}_k$, and an $n$-globular category $\Omega_{\leq n}$ whose underlying $n$-globular set (i.e its object of objects when one views globular categories as category objects in $\ca G^n(\Set)$) is $\tn{Tr}_{\leq n}$. Recall from \cite{BataninStreet-OmegaUProp} that $\Omega_{\leq n}$ has a universal property -- it is the free categorical $\ca T_{\leq n}$-algebra containing an internal $\ca T_{\leq n}$-algebra.

For any given node $x$ of $q$ of height $r$ one can consider a morphism $\tilde{x}:z^{k-r}U_r \to q$ which picks out $x$ and all its ancestors. Pulling the components of $\tilde{x}$ back along those of $f$ produces a tree $f^{-1}(x) \in \tn{Tr}_r$, and one has an inclusion $z^{k-r}f^{-1}(x) \hookrightarrow p$ of $k$-stage trees. When $x$ is the $i$-th leaf of $q$ we denote $f^{-1}(x)$ as $p_i$, and all the $p_i$'s together are called the \emph{fibres} of $f$. Now the trees $p_i$ provide a labelling of $q$ in the sense that truncating $p_i$ and $p_{i{+}1}$ to the height of the highest common ancestor of the $i$-th and $(i{+}1)$-th leaves of $q$ gives the same tree for $1 \leq i < a$, where $a$ is the number of leaves of $q$. From the explicit description of $\ca T_{\leq n}$ then, $f:p \to q$ may be identified with an element of $\ca T^2_{\leq n}(1)_k$, the effect of $\ca T_{\leq n}(!)$ (where $!:\ca T_{\leq n}(1) \to 1$ is the unique map) on this element is $q$, and of $\mu_1$ on this element is $p$. In other 
words the result of substituting the $p_i$ into $q$ is $p$. Now for a $\ca T_{\leq n}$-operad $A$, the data defining the operad substitution of $A$ amounts to sets and functions
\[ \sigma_f : A_q \times \left(A_{p_1} \times_{A_{p_1'}} ... \times_{A_{p_{a{-}1}'}} A_{p_a}\right) \longrightarrow A_p \]
for all morphisms $f:p \to q$ of $k$-stage trees, where $p_i'$ is the result of truncating $p_i$ to the height of the highest common ancestor of the $i$-th and $(i{+}1)$-th leaves of $q$.
\begin{definition}\label{def:tree-inclusions}
A morphism $f:p \to q$ of $k$-stage trees is an \emph{inclusion} when its components -- the $f^{(i)}$ -- are injective functions. We denote by $\Omega_{\leq n}^{\tn{(incl)}}$ the sub-$n$-globular category consisting of all the trees and the inclusions between them.
\end{definition}
It is also worth noting that the categories $\Omega_{\leq n}$ and $\Omega_{\leq n}^{\tn{(incl)}}$ sit naturally within Joyal's category $\Theta_n$. In the language of \cite{Berger-CellularNerve}, $\Omega_{\leq n}$ (resp. $\Omega_{\leq n}^{\tn{(incl)}}$) is dual to the subcategory of $\Theta_n$ consisting of all the objects and the covers (resp. degeneracies) between them.

Clearly a morphism of $k$-stage trees is an inclusion iff its fibres are linear. Thus $\sigma_f$ for $f$ an inclusion encodes the substitution of operations of unit type into operations of (an arbitrary) arity $q$. In this case when $A$ is reduced, the sets $A_{p_i}$ and $A_{p_i'}$ are all singletons, and so $\sigma_f$ may be regarded simply as a function $\sigma_f:A_q \to A_p$. It is these substitutions that we wish to include in our richer notion of collection. Denote by $\Psi_{\leq n}$ the full subcategory of $\tn{el}(\Omega_{\leq n}^{\tn{(incl)}})$ consisting of the non-linear trees.
\begin{definition}\label{def:pointed-reduced-collection}
A \emph{pointed reduced $\ca T_{\leq n}$-collection} is a presheaf $A : \Psi_{\leq n}^{\op} \to \Set$. We denote by $\PtRdColl{\ca T_{\leq n}} = [\Psi_{\leq n}^{\op},\Set]$
the category of pointed reduced $\ca T_{\leq n}$-collections.
\end{definition}
Thus a pointed reduced collection $A$ contains a set of operations of arity $p$ for any tree $p$ that is not linear, just as with a reduced collection, and in addition one has extra functions between those sets that codify the substitution of unique operations of unit type into arbitrary operations. Recall that $\RdColl{\ca T_{\leq n}} \catequiv [\ca R_{\leq n}^{\op},\Set]$ where $\ca R_{\leq n}$ is the full subcategory of $\tn{el}(\tn{Tr}_{\leq n})$ consisting of the trees which are not linear, and so there is by definition an identity on objects functor $i:\ca R_{\leq n} \to \Psi_{\leq n}$, with the morphisms of $\Psi_{\leq n}$ not in the image of $i$ coming from inclusions of trees. Denoting by $j:\ca R_{\leq n} \to \tn{el}(\tn{Tr}_{\leq n})$ the fully faithful inclusion, we have adjunctions
\[ \xygraph{!{0;(3.5,0):(0,1)::} {\PtRdColl{\ca T_{\leq n}}}="l" [r] {\RdColl{\ca T_{\leq n}}}="m" [r] {\Coll{\ca T_{\leq n}}}="r"
"l":@<-1.2ex>"m"_-{\tn{res}_i}|-{}="b1":@<-1.2ex>"r"_-{\tn{ran}_j}|-{}="b2" "l":@{<-}@<1.2ex>"m"^-{\tn{lan}_i}|-{}="t1":@{<-}@<1.2ex>"r"^-{\tn{res}_j}|-{}="t2"
"t1":@{}"b1"|{\perp} "t2":@{}"b2"|{\perp}} \]
given by left Kan extension and restriction along $i$, and right Kan extension and restriction along $j$. Note in particular the effect of $\tn{res}_j$ and $\tn{lan}_i$ on representables. One has $\tn{res}_j(p) = 0$ if $p$ is a linear tree, otherwise one can take $\tn{res}_j(p) = p$, and one has $\tn{lan}_i(p) = p$ for all representables. Moreover from the description of boundaries of representables as pushouts in $\Coll{\ca T_{\leq n}}$, it is clear that $\tn{res}_j(\partial p) = 0$ if $\tn{tr}(p)$ is linear.

Let us now describe the appropriate cofibrantly generated algebraic weak factorisation system on $\PtRdColl{\ca T_{\leq n}}$. We shall define the functor
\[ J : \ca V_{\leq n} \longrightarrow (\PtRdColl{\ca T_{\leq n}})^{[1]} \]
where $\ca V_{\leq n}$ is the subcategory of $\Psi_{\leq n}$ consisting of all the non-linear trees, and just the inclusions between them. Note that the category $\tn{el}(\Omega^{\tn{(incl)}}_{\leq n})$, as the domain of a split fibration into $\mathbb{G}_{\leq n}$, comes with a strict factorisation system in which the left class are the vertical arrows, and the right class are the chosen cartesian arrows (i.e., chosen by the given split cleavage). Thus $\Psi_{\leq n}$ inherits such a factorisation system, $\ca R_{\leq n}$ is the subcategory of morphisms in the right class -- consisting of the cosource and cotarget morphisms, and $\ca V_{\leq n}$ is the subcategory of morphisms in the left class -- the vertical morphisms, which in this case are the inclusions of trees. We shall define the object and arrow maps of $J$ in an inductive manner.

Recall that sources and targets for $\tn{Tr}_{\leq n}$ agree and are given by truncation. For $0 \leq r \leq k \leq n$ let us denote by $\tn{tr}^{k-r}:\tn{Tr}_k \to \tn{Tr}_r$ the function which sends $p \in \tn{Tr}_k$ to its height-$r$ truncation, that is to say, to the tree $p^{(r)} \to ... \to p^{(0)}$. When $k-r=1$ this was denoted above simply as $\tn{tr}$. The \emph{non-linear height} $\tn{ht}(p)$ of $p \in \tn{Tr}_k$ is greatest $h \in \N$ such that $\tn{tr}^{k-h}(p)$ is a linear tree. Since $\tn{tr}^k(p)=U_0$ is linear, $\tn{ht}(p)$ is well-defined for all $p \in \tn{Tr}_k$, and by definition a tree $p$ is linear iff $\tn{ht}(p)=0$.

For a non-linear tree $p$ we write $J(p) = \phi_p:\partial(p) \to p$ in $\PtRdColl{\ca T_{\leq n}}$, and these maps are defined by induction on $\tn{ht}(p)$. For the base case $\tn{ht}(p)=1$, we take $\partial(p)$ to be initial giving us a unique map $\phi_p:\partial(p) \to p$. Similarly as in section(\ref{ssec:operads-with-chosen-contractions}) $\partial(p)$ and $\phi_p$, for $p$ of non-linear height $> 1$, are defined by
\[ \xygraph{!{0;(2,0):(0,.5)::} {\partial\tn{tr}(p)}="tl" [r] {\tn{tr}(p)}="tr" [d] {\partial(p)}="br" [l] {\tn{tr}(p)}="bl" "tl":"tr"^-{\phi_{\tn{tr}(p)}}:"br"^-{\tau_p'}:@{<-}"bl"^-{\sigma_p'}:@{<-}"tl"^-{\phi_{\tn{tr}(p)}}:@{}"br"|-{\tn{po}}
"br" [dr] {p}="bbrr" "bl":@/_{1pc}/"bbrr"_{\sigma_p} "tr":@/^{1pc}/"bbrr"^{\tau_p} "br":@{.>}"bbrr"|-{\phi_p}} \]
though now in the category $\PtRdColl{\ca T_{\leq n}}$. Note that the image of the object map of $J$ just described agrees with applying $\tn{lan}_i\tn{res}_j$ to $\ca I_{\leq n}'$, except for the presence of copies of $1_0$ (coming from $\phi_p$ and $\phi_p'$ in $\Coll{\ca T_{\leq n}}$ with $p$ linear).

Given an inclusion $f:p \to q$ of non-linear $k$-stage trees we shall define $\partial(f)$ as on the left in
\[ \begin{array}{lccr} {\partial(f):\partial(p) \to \partial(q)} &&& {J(f)=(\partial(f),f):\phi_p \to \phi_q} \end{array} \]
such that $\phi_q\partial(f)=f\phi_p$, which thus gives a morphism $J(f)$ in $(\PtRdColl{\ca T_{\leq n}})^{[1]}$ as on the right in the previous display. We define $\partial(f)$ by induction on the non-linear height of $p$. For the base case where $\tn{ht}(p)=1$, $\partial(p)=0$ and so $\partial(f)$ is uniquely defined and one has $\phi_q\partial(f)=f\phi_p$ by initiality. When $\tn{ht}(p)>1$, $\partial(f)$ is constructed in
\[ \xygraph{!{0;(1.5,0):(0,.6667)::} {\partial\tn{tr}(p)}="tl" [r] {\tn{tr}(p)}="tr" [d] {\partial(p)}="br" [l] {\tn{tr}(p)}="bl"
"tl":"tr"^-{\phi_{\tn{tr}(p)}}:"br"^-{\tau_p'}:@{<-}"bl"^-{\sigma_p'}:@{<-}"tl"^-{\phi_{\tn{tr}(p)}}:@{}"br"|-{\tn{po}}
"tl" [l(.75)u] {\partial\tn{tr}(q)}="tl2" "tr" [r(.75)u] {\tn{tr}(q)}="tr2" "br" [r(.75)d] {\partial(q)}="br2" "bl" [l(.75)d] {\tn{tr}(q)}="bl2"
"tl2":"tr2"^-{\phi_{\tn{tr}(q)}}:"br2"^-{\tau_q'}:@{<-}"bl2"^-{\sigma_q'}:@{<-}"tl2"^-{\phi_{\tn{tr}(q)}}
"tl":"tl2"_-{\partial(\tn{tr}(f))} "tr":"tr2"^-{\tn{tr}(f)} "bl":"bl2"^-{\tn{tr}(f)} "br":@{.>}"br2"_-{\partial(f)}} \]
in which the existence of $\partial(\tn{tr}(f))$ and $\phi_{\tn{tr}(q)}\partial(\tn{tr}(f)) = \tn{tr}(f)\phi_{\tn{tr}(p)}$ follow by induction, and so $\partial(f)$ is defined uniquely so that $\partial(f)\sigma_p'= \sigma_q'\tn{tr}(f)$ and $\partial(f)\tau_p'= \tau_q'\tn{tr}(f)$. In order that this induction goes through we must verify $\phi_q\partial(f)=f\phi_p$. Since $(\sigma_p',\tau_p')$ are jointly epic, it suffices to show $\phi_q\partial(f)\sigma_p'=f\phi_p\sigma_p'$ and $\phi_q\partial(f)\tau_p'=f\phi_p\tau_p'$. One has
\[ \phi_q\partial(f)\sigma_p' = \phi_q\sigma_q'\tn{tr}(f) = \sigma_q\tn{tr}(f) = f\sigma_p = f\phi_p\sigma_p', \]
$\phi_q\partial(f)\tau_p'=f\phi_p\tau_p'$ follows similarly. The functoriality of $J$ is clear by construction. Finally we define the set $\ca S$ of strict objects of $\ca V_{\leq n}$ to be the non-linear $n$-stage trees.
\begin{proposition}\label{prop:algebraic weak factorisation system-for-PtRedColl}
The algebraic weak factorisation system $(L,R)$ on $\PtRdColl{\ca T_{\leq n}}$ cofibrantly generated by $(\ca S,J)$ defined above is finitary.
\end{proposition}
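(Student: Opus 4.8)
The plan is to reduce the statement directly to Lemma \ref{lem:fin-cofgen-algebraic weak factorisation system}. By the terminology fixed earlier, the algebraic weak factorisation system cofibrantly generated by $(\ca S,J)$ is by definition the one cofibrantly generated by the functor
\[ J_{\ca S} : (\ca V_{\leq n})_{\ca S} \longrightarrow (\PtRdColl{\ca T_{\leq n}})^{[1]}, \]
where $(\ca V_{\leq n})_{\ca S} = \ca V_{\leq n} \coprod \ca S$, the functor $J_{\ca S}$ agrees with $J$ on $\ca V_{\leq n}$, and $J_{\ca S}(s) = \phi_s'$ is the morphism associated to $\phi_s$ by Lemma \ref{lem:weak factorisation system->sfs} for each strict object $s \in \ca S$. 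Since $\PtRdColl{\ca T_{\leq n}} = [\Psi_{\leq n}^{\op},\Set]$ is a presheaf category it is locally finitely presentable, and $(\ca V_{\leq n})_{\ca S}$ is small because $\Psi_{\leq n}$ is. Thus it suffices to verify that for every object of $(\ca V_{\leq n})_{\ca S}$, both the domain and codomain of its image under $J_{\ca S}$ are finitely presentable objects of $\PtRdColl{\ca T_{\leq n}}$.

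First I would treat the objects coming from $\ca V_{\leq n}$, that is the non-linear trees $p$, for which $J(p) = \phi_p : \partial(p) \to p$. The codomain $p$ is a representable presheaf and hence finitely presentable. For the domain $\partial(p)$ I would argue by induction on $\tn{ht}(p)$, mirroring the construction of $\partial(p)$ given above: in the base case $\tn{ht}(p)=1$ one has $\partial(p) = 0$, the initial (hence finitely presentable) object, while for $\tn{ht}(p) > 1$ the construction exhibits $\partial(p)$ as the pushout $\tn{tr}(p) \sqcup_{\partial\tn{tr}(p)} \tn{tr}(p)$. Here $\tn{tr}(p)$ is representable and $\partial\tn{tr}(p)$ is finitely presentable by the inductive hypothesis, so $\partial(p)$, being a finite colimit of finitely presentable objects, is finitely presentable.

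Next I would treat the strict objects $s \in \ca S$, that is the non-linear $n$-stage trees, for which $J_{\ca S}(s) = \phi_s'$. By the construction in Lemma \ref{lem:weak factorisation system->sfs} applied to $\phi_s : \partial(s) \to s$, the codomain of $\phi_s'$ is again $s$, which is representable and hence finitely presentable, while its domain is the pushout $s \sqcup_{\partial(s)} s$ of $\phi_s$ against itself. Since $s$ and $\partial(s)$ are finitely presentable by the previous paragraph, this pushout is finitely presentable as well.

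With all domains and codomains of $J_{\ca S}$ shown to be finitely presentable, Lemma \ref{lem:fin-cofgen-algebraic weak factorisation system} applies directly and yields that the resulting algebraic weak factorisation system is finitary. The only genuine work is the inductive verification of finite presentability of the boundaries $\partial(p)$; everything else is a matter of recognising representables and pushouts of finitely presentable objects. I do not anticipate any real obstacle, since the construction of $\partial(p)$ in $\PtRdColl{\ca T_{\leq n}}$ is assembled from finitely many pushouts of representables, the recursion terminating because $\tn{ht}(p) \leq n$.
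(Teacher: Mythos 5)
Your proposal is correct and follows essentially the same route as the paper: reduce to Lemma \ref{lem:fin-cofgen-algebraic weak factorisation system} and observe that the domains and codomains of the maps in the image of $J_{\ca S}$ are representables or finite colimits of representables, hence finitely presentable. You are somewhat more explicit than the paper (which compresses the finite presentability of the boundaries $\partial(p)$ and of the doubled objects $s \sqcup_{\partial(s)} s$ into one sentence), but the content is identical.
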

\begin{proof}
By lemma(\ref{lem:fin-cofgen-algebraic weak factorisation system}) it suffices to show that the domains and codomains of arrows of $\PtRdColl{\ca T_{\leq n}}$ in the image of $J_{\ca S}$ are all finitely presentable. The domains are non-linear trees viewed as objects of $\PtRdColl{\ca T_{\leq n}}$, are representables, and so are finitely presentable objects. The codomains are clearly finite colimits of representables, and so are also finitely presentable.
\end{proof}
\begin{definition}\label{def:choice-unital-contractions}
A \emph{choice of unital contractions} for a pointed reduced $\ca T_{\leq n}$-collection $A$ is an $R_1$-algebra structure on $A$ for the algebraic weak factorisation system $(L,R)$ of proposition(\ref{prop:algebraic weak factorisation system-for-PtRedColl}). A choice of unital contractions for a reduced $\ca T_{\leq n}$-operad is one for its underlying pointed reduced collection.
\end{definition}
Explicitly a choice of unital contractions for $A$ amounts to
\begin{itemize}
\item choice of contractions in the usual sense for the underlying collection, and
\item a compatibility condition on these choices with respect to the process of substituting the unique unit operations to general operations.
\end{itemize}
For $p$ an object of $\Psi_{\leq n}$, a morphism $p \to A$ in $\PtRdColl{\ca T_{\leq n}}$ is by Yoneda an operation of $A$ of arity $p$, and a morphism $\partial(p) \to A$ is, by the definition of $\partial(p)$, a pair of operations $(a,b)$ of (the underlying $\ca T_{\leq n}$-collection of) $A$ both of arity $\tn{tr}(p)$, whose sources and targets agree (called ``parallel operations'' in \cite{Leinster-HDA-book}). So within $\PtRdColl{\ca T_{\leq n}}$ a choice of contractions $\gamma$ amounts to choices of fillers as in
\begin{equation}\label{diag:choice-of-contractions}
\begin{aligned}
\xygraph{!{0;(2,0):(0,.5)::} {\partial(p)}="tl" [r] {A}="tr" [d] {1}="br" [l] {p}="bl" "tl":"tr"^-{(a,b)}:"br"^-{}:@{<-}"bl"^-{}:@{<-}"tl"^-{\phi_p} "bl":@{.>}"tr"|-{\gamma(p,a,b)}} 
\end{aligned}
\end{equation}
for all $0 < k \leq n$, non-linear $k$-stage trees $p$, and parallel operations $(a,b)$ as shown.

We remind the reader that as pointed out in \cite{Leinster-HDA-book}, the choice $\gamma$ encodes two aspects of higher category theory in one go -- compositions and coherences. For example taking $p$ to be the $1$-stage tree on the left
\[ \xygraph{{\xybox{\xygraph{!{0;(.5,0):(0,1)::} {\bullet}="l" [dr] {\bullet}="b" [ur] {\bullet}="r" "l"-"b"-"r"}}} [r(3)]
{z\left(\xybox{\xygraph{*=(1,1)\xybox{\xygraph{!{0;(.5,0):(0,1)::} {\bullet}="l" [dr] {\bullet}="b" [ur] {\bullet}="r" [l] {\bullet}="m" "l"-"b"(-"r",-"m")}}}}\right)}} \]
then an operation of arity $p$ is a binary composition of $1$-cells, and in (\ref{diag:choice-of-contractions}) $(a,b)$ are determined uniquely (i.e., $a$ and $b$ are both the operadic unit in dimension $0$), and so the contraction in this instance gives a distinguished binary composition of $1$-cells as part of the resulting algebraic structure. On the other hand taking $p$ to be the $2$-stage tree on the right in the previous display, then $(a,b)$ of (\ref{diag:choice-of-contractions}) will be a pair of operations that compose chains of $1$-cells of length $3$, and then the filler will give distinguished coherence $2$-cells between these. So in general $\gamma$ \emph{distinguishes} a higher categorical operation of each arity.

The compatibility for a choice of unital contractions for $A$ says that if one substitutes some unique unit operations into a given distinguished operation -- either one that gives some kind of composition, or one that gives some kind of coherence -- then the result is another distinguished operation. For example part of the data of a choice of contractions for a reduced operad $A$ will be a choice of operation that composes chains of one-cells of length $n$ for all $n \geq 2$. Compatibility says in particular that, if one starts with a chain
\[ w \xrightarrow{f} x \xrightarrow{g} y \xrightarrow{h} z \]
of one-cells in a given $A$-algebra, and then regards it as a longer chain by adding in some identity arrows
\[ w \xrightarrow{f} x \xrightarrow{1_x} x \xrightarrow{1_x} x \xrightarrow{g} y \xrightarrow{h} z \xrightarrow{1_z} z  \]
which are part of the structure by virtue of the reducedness of $A$ (i.e., what we denote here as identity one-cells are provided by the unique operation of $A$ of arity $z(U_0)$), then applying the chosen method of composition to this longer chain should agree with applying the chosen method of composition to the original chain.

As the foregoing discussion indicates, the existence of a choice of unital contractions for a given reduced $\ca T_{\leq n}$-operad $A$ amounts to saying that one can distinguish operations of $A$ of each arity, in such a way that \emph{this choice} is compatible with the process of substituting in unit operations. While this may seem like a strong condition at first glance, we shall see now that such choices can be exhibited in low-dimensional cases of interest, and moreover one has universal such $\ca T_{\leq n}$-operads that we will use to define weak $n$-categories with strict units. In section(\ref{sec:strictly-unital-weak-n-cats-via-iterated-enrichment}) we shall see that this choice of unital contractions is exactly what we need to get an alternative iterative formulation.
\begin{example}\label{ex:chosen-unital-contractions-normal-bicat}
Let $A$ be the $\ca T_{\leq 2}$-operad whose algebras are bicategories in the sense of B\'{e}nabou \cite{Benabou-Bicategories} for which the unit coherence $2$-cells are identities, and we write ``$\comp$'' for the given binary horizontal composition of $1$-cells. The strictness of the units ensures that $A$ is reduced. The coherence theorem for bicategories ensures that one has unique choices of contractions $\gamma(p,a,b)$ when $p$ is a $2$-stage tree. So to give a chosen contraction for $A$ we must choose a distinguished composite of chains of $1$-cells of length $n$, for $n \geq 2$. The set of all such compositions may be identified with the set of binary brackettings of $n$ symbols, so for each $n$ we must choose one of these. There are two obvious choices, to bracket completely to the left or completely to the right, as illustrated in
\[ \begin{array}{lcccccr} {x_0 \xrightarrow{f_1} x_1 \xrightarrow{f_2} x_2 \xrightarrow{f_3} x_3 \xrightarrow{f_4} x_4}
&&& {((f_4 \comp f_3) \comp f_2) \comp f_1} &&& {f_4 \comp (f_3 \comp (f_2 \comp f_1))} \end{array} \]
in the case $n=4$. Clearly each of these schemes is compatible with units, for example
\[ (((((1_{x_4} \comp f_4) \comp f_3) \comp 1_{x_2}) \comp 1_{x_2}) \comp f_2) \comp f_1 = ((f_4 \comp f_3) \comp f_2) \comp f_1, \]
and so in two ways, one can exhibit a choice of unital contractions for $A$.
\end{example}
\begin{example}\label{ex:chosen-unital-contractions-GrayCat}
Let $A$ be the $\ca T_{\leq 3}$-operad for Gray categories. The underlying $1$-operad is terminal, $A$'s contractibility ensures that one has unique choices of contractions $\gamma(p,a,b)$ when $p$ is a $2$-stage tree, and so the only choices that aren't forced concern what one takes to be the distinguished composites of non-degenerate 2-dimensional pasting diagrams in a Gray category. The set of all ways to compose a given such pasting diagram, say
\[ \xygraph{!{0;(2,0):(0,.5)::} {\bullet}="p1" [r] {\bullet}="p2" [r] {\bullet}="p3" [r] {\bullet}="p4" [r] {\bullet}="p5" 
"p1":@/^{2pc}/"p2"|-{}="p11" "p1":"p2"|-{}="p21" "p1":@/_{2pc}/"p2"|-{}="p31" "p2":"p3" "p3":@/^{1pc}/"p4"|-{}="p13" "p3":@/_{1pc}/"p4"|-{}="p23"
"p4":@/^{4pc}/"p5"|-{}="p14" "p4":@/^{2pc}/"p5"|-{}="p24" "p4":"p5"|-{}="p34" "p4":@/_{2pc}/"p5"|-{}="p44" "p4":@/_{4pc}/"p5"|-{}="p54"
"p11":@{}"p21"|(.5){}="m11" "m11" [u(.15)] :@{=>}[d(.3)]^{\alpha_{11}} "p21":@{}"p31"|(.5){}="m21" "m21" [u(.15)] :@{=>}[d(.3)]^{\alpha_{21}}
"p13":@{}"p23"|(.5){}="m13" "m13" [u(.15)] :@{=>}[d(.3)]^{\alpha_{13}}
"p14":@{}"p24"|(.5){}="m14" "m14" [u(.15)] :@{=>}[d(.3)]^{\alpha_{14}} "p24":@{}"p34"|(.5){}="m24" "m24" [u(.15)] :@{=>}[d(.3)]^{\alpha_{24}}
"p34":@{}"p44"|(.5){}="m34" "m34" [u(.15)] :@{=>}[d(.3)]^{\alpha_{34}} "p44":@{}"p54"|(.5){}="m44" "m44" [u(.15)] :@{=>}[d(.3)]^{\alpha_{44}}} \]
to a $2$-cell, that one has in a Gray category, can be identified with linear orders $\leq$ on the set of $\alpha_{ij}$ such that $i_1 \leq i_2$ implies $\alpha_{i_1j} \leq \alpha_{i_2j}$, or in other words, with $(2,1,4)$-shuffles. There are two evident unit-compatible choices. In terms of linear orders one can define $\alpha_{i_1j_1} < \alpha_{i_2j_2}$ iff $j_1 < j_2$ or ($j_1=j_2$ and $i_1<i_2$); or alternatively $\alpha_{i_1j_1} < \alpha_{i_2j_2}$ iff $j_1 > j_2$ or ($j_1=j_2$ and $i_1<i_2$), which for the above example correspond to the linear orders
\[ \begin{array}{lccr} {(\alpha_{11},\alpha_{21},\alpha_{13},\alpha_{14},\alpha_{24},\alpha_{34},\alpha_{44})} &&&
{(\alpha_{14},\alpha_{24},\alpha_{34},\alpha_{44},\alpha_{13},\alpha_{11},\alpha_{21})} \end{array} \]
which correspond to ``moving'' down each column completely, and from left-to-right or right-to-left respectively.
\end{example}
Not every ``systematic looking'' choice of compositions need give a unit-compatible choice of contractions. For instance in this last example another choice of contractions could be to move from left to right and row by row, just as one reads text in European languages, so that in the case of the above pasting diagram one would choose the order
\[ (\alpha_{11},\alpha_{13},\alpha_{14},\alpha_{21},\alpha_{24},\alpha_{34},\alpha_{44}). \]
The choice of contractions so determined is not unit-compatible, because the distinguished composite for the pasting diagram on the left
\[ \xygraph{!{0;(2,0):(0,.5)::} {\bullet}="l" [r] {\bullet}="m" [r] {\bullet}="r"
"l":@/^{1pc}/"m"|-{}="p11" "l":@/_{1pc}/"m"|-{}="p21" "m":@/^{1pc}/"r"|-{}="p12" "m":@/_{1pc}/"r"|-{}="p22"
"p11":@{}"p21"|(.5){}="m11" "m11" [u(.15)] :@{=>}[d(.3)]^{f} "p12":@{}"p22"|(.5){}="m12" "m12" [u(.15)] :@{=>}[d(.3)]^{g}
"r" [r(.75)] {\bullet}="l2" [r] {\bullet}="m2" [r] {\bullet}="r2"
"l2":@/^{2pc}/"m2"|-{}="p112" "l2":"m2"|-{}="p212" "l2":@/_{2pc}/"m2"|-{}="p312" "m2":@/^{2pc}/"r2"|-{}="p122" "m2":"r2"|-{}="p222" "m2":@/_{2pc}/"r2"|-{}="p322"
"p112":@{}"p212"|(.5){}="m112" "m112" [u(.15)] :@{=>}[d(.3)]^{1} "p212":@{}"p312"|(.5){}="m212" "m212" [u(.15)] :@{=>}[d(.3)]^{f}
"p122":@{}"p222"|(.5){}="m122" "m122" [u(.15)] :@{=>}[d(.3)]^{g} "p222":@{}"p322"|(.5){}="m222" "m222" [u(.15)] :@{=>}[d(.3)]^{1}} \]
corresponds to the order $(f,g)$, whereas substituting in identities to the distinguished composite as indicated on the right in the previous display, gives the operation that corresponds to the other order $(g,f)$.

We now proceed to give the definition of weak $n$-category with strict units, following the approach of section(\ref{sec:weak-n-cat-review}).
\begin{proposition}\label{prop:finitary-monad-for-RedOpChosenUnitalContractions}
There is a finitary monad on $\PtRdColl{\ca T_{\leq n}}$ whose algebras are reduced operads equipped with a choice of unital contractions.
\end{proposition}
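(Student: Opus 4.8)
The plan is to mimic exactly the scheme used in section(\ref{ssec:ncatopreview-overview}) to define weak $n$-categories, but now working over the locally finitely presentable category $\PtRdColl{\ca T_{\leq n}}$. Namely, I would exhibit two finitary monads on $\PtRdColl{\ca T_{\leq n}}$ and form their coproduct. The first, call it $\tn{Opd}^{\tn{(rd)}}$, is to have reduced operads as algebras; the second is the monad $R_1$ of definition(\ref{def:choice-unital-contractions}), coming from the finitary algebraic weak factorisation system of proposition(\ref{prop:algebraic weak factorisation system-for-PtRedColl}), whose algebras are pointed reduced collections equipped with a choice of unital contractions. Since that algebraic weak factorisation system is finitary, $R_1$ is finitary by the general remarks on finitary algebraic weak factorisation systems in section(\ref{ssec:ncatopreview-awfs}).

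First I would construct $\tn{Opd}^{\tn{(rd)}}$, the cleanest route being via finite limit sketches, in parallel with the treatment of $\tn{Opd}_{\leq n}$ in section(\ref{ssec:ncatopreview-overview}). The presheaf category $\PtRdColl{\ca T_{\leq n}} = [\Psi_{\leq n}^{\op},\Set]$ is the category of models of the sketch $S_0$ whose underlying graph is $\Psi_{\leq n}$ and which carries no distinguished limit cones. I would extend $S_0$ to a finite limit sketch $S$ whose models are reduced $\ca T_{\leq n}$-operads, by adjoining the objects, arrows and cones encoding the operad unit and the substitution functions $\sigma_f$ for all tree morphisms $f$, together with the associativity and unit axioms. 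The point is that since $\ca T_{\leq n}$ is a finitary (indeed local right adjoint) monad, each arity $q$ is a finite tree and each domain $A_q \times (A_{p_1} \times_{A_{p_1'}} \cdots \times_{A_{p_{a-1}'}} A_{p_a})$ occurring in the operad substitution is a \emph{finite} limit; hence $S$ really is a finite limit sketch, $\Mod(S)$ is locally finitely presentable, and the restriction functor $\Mod(S) \to \Mod(S_0) = \PtRdColl{\ca T_{\leq n}}$ induced by the inclusion $S_0 \hookrightarrow S$ is finitary and monadic. This restriction functor is the forgetful functor sending a reduced operad to its canonical underlying pointed reduced collection, whose pointed structure is given by the substitution-of-units maps $\sigma_f$ for $f$ an inclusion; by design of the inclusion $S_0 \hookrightarrow S$ these are identified with the presheaf action of $\Psi_{\leq n}$ on inclusions, so the induced monad records reduced operads as pointed reduced collections with extra structure. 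I would take $\tn{Opd}^{\tn{(rd)}}$ to be this monad. (Alternatively one could transport the finitary monad for reduced operads over $\RdColl{\ca T_{\leq n}}$ provided by the monadic functor $U^{\tn{(rd)}}$ of diagram(\ref{diag:rdops-rdcolls}) along the adjunction $\tn{lan}_i \ladj \tn{res}_i$, but the sketch description makes the matching of pointed structures most transparent.)

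Having the two finitary monads $\tn{Opd}^{\tn{(rd)}}$ and $R_1$ on the locally finitely presentable category $\PtRdColl{\ca T_{\leq n}}$, I would form their coproduct in the category of monads. As recalled in the introduction, the coproduct of a pair of finitary monads on a locally finitely presentable category exists and is again finitary; this is the monad claimed in the statement. Its algebras are objects of $\PtRdColl{\ca T_{\leq n}}$ carrying compatibly both an $\tn{Opd}^{\tn{(rd)}}$-algebra structure and an $R_1$-algebra structure on the same underlying pointed reduced collection, which by the two identifications above is precisely a reduced $\ca T_{\leq n}$-operad together with a choice of unital contractions in the sense of definition(\ref{def:choice-unital-contractions}).

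The step I expect to require the most care is the construction of $\tn{Opd}^{\tn{(rd)}}$, and specifically the verification that the substitution-of-units maps built into the presheaf structure on $\Psi_{\leq n}$ are forced by the sketch $S$ to coincide with the unit-substitutions of the operad. This is what guarantees that the coproduct algebras have their contraction data and their operad data living on \emph{one} underlying pointed reduced collection, so that the choice of unital contractions really is a choice for the operad itself and not for some unrelated collection. Everything else — finitariness of $R_1$, finitariness of the monad coproduct, and the identification of the coproduct's algebras — is either quoted from earlier results or is the standard description of algebras for a coproduct of monads.
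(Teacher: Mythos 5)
Your proposal is correct and follows essentially the same route as the paper: the coproduct of the finitary monad $R_1$ from proposition(\ref{prop:algebraic weak factorisation system-for-PtRedColl}) with a finitary monad on $\PtRdColl{\ca T_{\leq n}}$ whose algebras are reduced operads, the latter obtainable by a finite limit sketch argument. The paper merely asserts the sketch argument and instead spells out an alternative explicit construction of the left adjoint to $U:\RdOp{\ca T_{\leq n}} \to \PtRdColl{\ca T_{\leq n}}$ via the monadicity of $\tn{res}_i$ and Dubuc's adjoint triangle theorem, but the underlying decomposition is identical to yours.
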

\begin{proof}
It suffices to give two finitary monads on $\PtRdColl{\ca T_{\leq n}}$, one whose algebras are reduced collections equipped with chosen unital contractions, and the other whose algebras are reduced operads, because then the required monad will just be the coproduct of these. In the first case one has the monad $R_1$ for the finitary algebraic weak factorisation system $(L,R)$ of proposition(\ref{prop:algebraic weak factorisation system-for-PtRedColl}). In the second case one can use a general finite limit sketch argument to establish the existence of the required finitary monad. Alternatively for a construction, one has forgetful functors
\[ \xygraph{!{0;(3,0):(0,1)::} {\RdOp{\ca T_{\leq n}}}="l" [r] {\PtRdColl{\ca T_{\leq n}}}="m" [r] {\RdColl{\ca T_{\leq n}},}="r" "l":"m"^-{U}:"r"^-{\tn{res}_i}} \]
$\tn{res}_i$ is cocontinuous and monadic by a standard application of the Beck Theorem (using that $i$ is a bijective on objects functor), we noted the finitariness of $U^{\tn{(rd)}}=\tn{res}_iU$ and constructed explicitly its left adjoint in section(\ref{ssec:reduced-operads}), and so Dubuc's adjoint triangle theorem \cite{Dubuc-KanExtensions} gives an explicit construction of the left adjoint of $U$. Moreover $U$ is finitary since both $\tn{res}_i$ and $U^{\tn{(rd)}}$ create filtered colimits, and monadic by another standard application of the Beck theorem.
\end{proof}
Thus the category of reduced operads equipped with a choice of unital contractions is locally finitely presentable, and we denote its initial object as $\ca K_{\leq n}^{(su)}$.
\begin{definition}\label{def:strictly-unital-weak-n-cat}
A \emph{weak $n$-category with strict units} is a $\ca K_{\leq n}^{(su)}$-algebra.
\end{definition}
Since as we noted above chosen unital contractions are in particular chosen contractions, there is an operad morphism $\ca K_{\leq n} \to \ca K_{\leq n}^{(su)}$ by the universal property of $\ca K_{\leq n}$, and thus every weak $n$-category with strict units has an underlying weak $n$-category.

\section{Strictly unital weak $n$-categories via iterated enrichment}
\label{sec:strictly-unital-weak-n-cats-via-iterated-enrichment}
The bulk of this section will be occupied with the proof of
\begin{lemma}\label{lem:key-lemma-for-iterative-defn}
For all $n \in \N$ one has an isomorphism of $\ca T_{\leq n}$-operads
\[ h(\ca K_{\leq n{+}1}^{\tn{(su)}}) \iso \ca K_{\leq n}^{\tn{(su)}}. \]
\end{lemma}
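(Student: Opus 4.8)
The plan is to recognise both $\ca K_{\leq n{+}1}^{\tn{(su)}}$ and $\ca K_{\leq n}^{\tn{(su)}}$ as initial objects of categories of reduced operads equipped with a choice of unital contractions, and to exhibit $h$ as the restriction to these categories of a functor that preserves the relevant initial object. Write $\ca C_m$ for the category of Proposition(\ref{prop:finitary-monad-for-RedOpChosenUnitalContractions}), with underlying finitary monad $\ca M_m$ on $\PtRdColl{\ca T_{\leq m}}$, free--forgetful adjunction $F_m \dashv U_m$, and initial (empty) object $0_m \in \PtRdColl{\ca T_{\leq m}}$. By definition $\ca K_{\leq m}^{\tn{(su)}}$ is the initial object of $\ca C_m$, and since $F_m$ is a left adjoint this is the free algebra $F_m(0_m)$ on the empty collection; so $\ca K_{\leq n{+}1}^{\tn{(su)}} \iso F_{n{+}1}(0_{n{+}1})$ and $\ca K_{\leq n}^{\tn{(su)}} \iso F_n(0_n)$.

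First I would identify $h$ combinatorially. An operation of $h(B)$ of arity an $n$-stage tree $q$ is exactly an operation of $B$ whose arity is the suspension $\Sigma q$ of $q$ — the tree obtained by grafting $q$ above a single node at height one — since a $q$-shaped pasting diagram of cells inside a single hom of a $B$-algebra is precisely a $\Sigma q$-shaped pasting in the $B$-algebra itself. Thus on underlying collections $h$ is restriction $\Sigma^*$ along the suspension functor $\Sigma$ on trees, and I would record the elementary facts that $\Sigma$ carries linear trees to linear trees (so that $B$ reduced forces each $h(B)_q \iso B_{\Sigma q}$ to be singleton on linear $q$, whence $h$ preserves reducedness) and that it is compatible with the boundary inclusions $\phi_p$ of section(\ref{ssec:define-strictly-unital-weak-n-cat}) and with operadic substitution. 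These compatibilities are what let $h$ lift to a functor $\tilde h : \ca C_{n{+}1} \to \ca C_n$ with $U_n \tilde h = \Sigma^* U_{n{+}1}$: a choice of unital contractions for $B$ supplies, by restriction along $\Sigma$, a choice of unital contractions for $h(B)$, and the substitution-of-units data transports because suspension commutes with the grafting of linear trees. It is here that strictness of units is essential — for a reduced operad the unit operations are unique, so, unlike the situation of section(\ref{ssec:weak units}) where $h(\ca K_{\leq 2})$ fails even to be over $\Set$, no monoid of formal unit-composites intervenes.

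The heart of the argument, and the step I expect to be the main obstacle, is to show that taking homs commutes with the free constructions, i.e. that the canonical comparison $\tilde h F_{n{+}1} \to F_n \Sigma^*$ is an isomorphism; this is the precise sense in which $h$ ``becomes a left adjoint''. It splits into two verifications. For the reduced-operad part one must show that the single-hom operations of a free reduced operad on a collection $X$ are freely generated by the single-hom operations of $X$: this uses that operadic substitution grafts fibres at the leaves and so leaves the height-one structure of an arity unchanged, forcing an operation of suspended arity to be assembled only from generators of suspended arity, with reducedness ensuring that the interposed unit operations contribute nothing new. For the unital-contraction monad one must show that the algebraic weak factorisation system on $\PtRdColl{\ca T_{\leq n}}$ is, along $\Sigma^*$, the restriction of the one on $\PtRdColl{\ca T_{\leq n{+}1}}$; concretely, that the generating (strict) cofibrations in dimension $n$ are the $\Sigma$-images of the corresponding ones in dimension $n{+}1$, so that $R_1$-algebra structures and the free such structures match under $\Sigma^*$. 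Both verifications should proceed by induction on the non-linear height of trees, mirroring the inductive construction of the $\phi_p$ in section(\ref{ssec:define-strictly-unital-weak-n-cat}).

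Granting the comparison isomorphism $\tilde h F_{n{+}1} \iso F_n \Sigma^*$ — of which only its instance at the empty collection is strictly needed — the conclusion is immediate. Since restriction is cocontinuous one has $\Sigma^*(0_{n{+}1}) = 0_n$, and therefore
\[ \tilde h(\ca K_{\leq n{+}1}^{\tn{(su)}}) \;=\; \tilde h F_{n{+}1}(0_{n{+}1}) \;\iso\; F_n \Sigma^*(0_{n{+}1}) \;=\; F_n(0_n) \;=\; \ca K_{\leq n}^{\tn{(su)}}. \]
Forgetting the contraction data, this yields the asserted isomorphism $h(\ca K_{\leq n{+}1}^{\tn{(su)}}) \iso \ca K_{\leq n}^{\tn{(su)}}$ of $\ca T_{\leq n}$-operads. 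The naturality of the comparison is not required for the lemma itself, but it is exactly what promotes $\tilde h$ to a genuine left adjoint, and hence to the cleaner statement exploited in theorem(\ref{thm:iterative-defn}).
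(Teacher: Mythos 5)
Your overall strategy --- recognise both operads as initial in the categories of reduced operads with chosen unital contractions and show that $h$ carries the one initial object to the other --- is the right one, and your reformulation of $h$ as restriction along suspension is correct. But the route you choose (a Beck--Chevalley isomorphism $\tilde h F_{n+1} \iso F_n\Sigma^*$ between composites of free and restriction functors; note that the canonical mate actually points the other way, $F_n\Sigma^* \to \tilde h F_{n+1}$) founders at exactly the step you identify as the heart of the argument, because the justification you give for it is false. For globular operads, operadic substitution does \emph{not} ``leave the height-one structure of an arity unchanged'': the fibres of a tree morphism $f:p\to q$ sit over the leaves of $q$, and one may substitute linear (degenerate) trees there, which \emph{collapses} height-one nodes of $q$ in the result. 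Concretely, take $q$ a $(k{+}1)$-stage tree with two height-one nodes and a generator $x$ of arity $q$; substituting the unique unit operations (which exist precisely because the operad is reduced) into the second column and genuine operations $y_1,y_2$ of suspended arity into the first produces an operation $x[y_1,y_2,u]$ of suspended arity whose outermost generator has non-suspended arity. So it is simply not true that ``an operation of suspended arity \ldots\ [is] assembled only from generators of suspended arity,'' and reducedness alone does not make ``the interposed unit operations contribute nothing new'' --- reducedness gives uniqueness of the unit operations but says nothing about how they substitute into everything else.

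What actually rescues such composites is (a) the fact that the collections are \emph{pointed}, so that substituting units into a generator $x$ is prescribed by the presheaf structure $X(g)(x)$ and lands back in the generating collection, and (b) the unital-contraction compatibility $\gamma(p,a,b)f=\gamma(q,a\,\tn{tr}(f),b\,\tn{tr}(f))$, which forces the substitution of units into a chosen contraction to be another chosen contraction, now of suspended arity. Only by invoking these, together with operad associativity and the identity $x[v,\dots,v]=v$, can one rewrite $x[y_1,y_2,u]$ as $(x\cdot g)[y_1,y_2]$ with $x\cdot g$ of suspended arity; this rewriting is precisely the computation the paper carries out (the manipulation of expressions of the form $b[v,\dots,v,u,\dots,u,v,\dots,v]$ in the proof that $\nu_B:B\to rh(B)$ is an operad map, and the subsequent check of compatibility with contractions), and it is entirely absent from your proposal. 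The paper organises this content differently and, I think, more efficiently: rather than comparing free functors, it exhibits $h$ directly as a left adjoint by writing down the right adjoint $r(A)=\Gamma(A^{\times})$ explicitly, constructing the unit $\nu_B$ by substitution of unit operations, verifying the triangle identities, and lifting the adjunction to operads with chosen unital contractions; initiality is then preserved for free. If you want to keep your route, you must replace your false structural claim by this rewriting argument (and also verify injectivity of the comparison and the corresponding statement for the contraction monad, where $\Sigma^*$ of a non-suspended representable is not representable, so the matching of lifting data is not automatic).
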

\noindent As explained in section(\ref{ssec:overview-weak-units}), from lemma(\ref{lem:key-lemma-for-iterative-defn}) and the lifting theorem one obtains the following result.
\begin{theorem}\label{thm:iterative-defn}
For all $n \in \N$ there is a distributive functor operad $\ca L_{\leq n}$ on the category $\ca G^n(\Set)^{\ca K_{\leq n}^{\tn{(su)}}}$ of algebras of $\ca K_{\leq n}^{\tn{(su)}}$ and an isomorphism of categories
\begin{equation}\label{eq:successive-enrichment}
\Enrich{\ca L_{\leq n}} \iso \ca G^{n{+}1}(\Set)^{\ca K_{\leq n{+}1}^{\tn{(su)}}}
\end{equation}
over $\ca G^{n{+}1}(\Set)$.
\end{theorem}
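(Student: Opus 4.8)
The plan is to deduce Theorem \ref{thm:iterative-defn} by applying the multitensor lifting theorem (Theorem \ref{thm:lift-mult}) to the multitensor associated to $\ca K_{\leq n{+}1}^{\tn{(su)}}$, and then using Lemma \ref{lem:key-lemma-for-iterative-defn} to identify the base of enrichment. Since $\ca K_{\leq n{+}1}^{\tn{(su)}}$ is a $\ca T_{\leq n{+}1}$-operad over $\Set$, the equivalence of Theorem \ref{thm:Operad-Multitensor-Equiv} supplies a distributive $\ca T_{\leq n}^{\times}$-multitensor $E = \overline{\ca K_{\leq n{+}1}^{\tn{(su)}}}$ on the cocomplete category $\ca G^n(\Set)$, with $\Gamma E \iso \ca K_{\leq n{+}1}^{\tn{(su)}}$ and a canonical isomorphism $\Enrich E \iso \ca G^{n{+}1}(\Set)^{\ca K_{\leq n{+}1}^{\tn{(su)}}}$ over $\ca G^{n{+}1}(\Set)$. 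Before invoking the lifting theorem I would record that $E$ is accessible: because $\ca T_{\leq n{+}1}$ is finitary and the defining transformation of a $\ca T_{\leq n{+}1}$-operad over $\Set$ is cartesian, the monad $\ca K_{\leq n{+}1}^{\tn{(su)}} = \Gamma E$ is finitary in the presheaf topos $\ca G^{n{+}1}(\Set)$, whence $E$ is finitary by Lemma \ref{lem:EAcc->GammaEAcc}. Distributivity of $E$ is automatic from its being cartesian over the distributive multitensor $\ca T_{\leq n}^{\times}$ in a lextensive setting.

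With the hypotheses of Theorem \ref{thm:lift-mult} in hand (for $\lambda = \aleph_0$), I obtain a unique distributive, finitary functor operad $(E',\sigma')$ on $\ca G^n(\Set)^{E_1}$ together with an isomorphism $\Enrich{E'} \iso \Enrich E$ over $\ca G^{n{+}1}(\Set)$. The crucial identification is then that the base category $\ca G^n(\Set)^{E_1}$ is the category of weak $n$-categories with strict units. By definition $E_1$ is the unary part of $\overline{\ca K_{\leq n{+}1}^{\tn{(su)}}}$, that is $E_1 = h(\ca K_{\leq n{+}1}^{\tn{(su)}})$ as a $\ca T_{\leq n}$-operad, so Lemma \ref{lem:key-lemma-for-iterative-defn} furnishes an isomorphism of $\ca T_{\leq n}$-operads $E_1 \iso \ca K_{\leq n}^{\tn{(su)}}$, and in particular an isomorphism of the underlying monads. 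This yields an isomorphism of categories $\ca G^n(\Set)^{E_1} \iso \ca G^n(\Set)^{\ca K_{\leq n}^{\tn{(su)}}}$ over $\ca G^n(\Set)$, along which I transport $(E',\sigma')$ to define the required functor operad $\ca L_{\leq n}$ on $\ca G^n(\Set)^{\ca K_{\leq n}^{\tn{(su)}}}$; distributivity and the functor-operad property are preserved by transport along an isomorphism.

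Finally I would assemble the isomorphisms. Composing $\Enrich{\ca L_{\leq n}} \iso \Enrich{E'}$ (the transport), $\Enrich{E'} \iso \Enrich E$ (the lifting theorem), and $\Enrich E \iso \ca G^{n{+}1}(\Set)^{\ca K_{\leq n{+}1}^{\tn{(su)}}}$ (Theorem \ref{thm:Operad-Multitensor-Equiv}) gives the isomorphism (\ref{eq:successive-enrichment}); each of these three isomorphisms is over $\ca G^{n{+}1}(\Set)$, so the composite is too.

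The genuine mathematical content lies entirely in Lemma \ref{lem:key-lemma-for-iterative-defn}, whose proof occupies the rest of this section; once it is available, the theorem is a formal assembly. Accordingly the only points requiring care in this proof are bookkeeping ones: checking that $E$ meets the accessibility and distributivity hypotheses of the lifting theorem, confirming the definitional equality $E_1 = h(\ca K_{\leq n{+}1}^{\tn{(su)}})$ so that the lemma applies to exactly the base of enrichment produced by the lifting theorem, and verifying that every isomorphism in sight respects the projection to $\ca G^{n{+}1}(\Set)$. I expect the identification $E_1 = h(\ca K_{\leq n{+}1}^{\tn{(su)}})$ together with its compatibility over $\Set$ to be the subtlest of these; everything else is routine.
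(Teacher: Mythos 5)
Your proposal is correct and follows essentially the same route as the paper, which deduces the theorem from Lemma \ref{lem:key-lemma-for-iterative-defn} together with the lifting theorem exactly as laid out in the overview of section \ref{ssec:overview-weak-units}: pass to the multitensor $E=\overline{\ca K_{\leq n+1}^{\tn{(su)}}}$ via Theorem \ref{thm:Operad-Multitensor-Equiv}, lift to a functor operad on $\ca G^n(\Set)^{E_1}$ via Theorem \ref{thm:lift-mult}, and identify $E_1=h(\ca K_{\leq n+1}^{\tn{(su)}})\iso \ca K_{\leq n}^{\tn{(su)}}$. Your bookkeeping of the accessibility hypothesis (finitariness of $\Gamma E$ from cartesianness over the finitary monad $\ca T_{\leq n+1}$, then Lemma \ref{lem:EAcc->GammaEAcc}) is a detail the paper leaves implicit, and it is handled correctly.
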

\noindent Thus the lifting theorem has produced the tensor products -- the $\ca L_{\leq n}$'s -- enabling us to recover the notion of weak $n$-category with strict units of definition(\ref{def:strictly-unital-weak-n-cat}) inductively by successive enrichment -- as expressed by (\ref{eq:successive-enrichment}) at the level of objects.

In general we have functors
\[ \begin{array}{lccr} {h : \NOp{\ca T_{\leq n{+}1}} \longrightarrow \Op{\ca T_{\leq n}}} &&&
{r : \Op{\ca T_{\leq n}} \longrightarrow \NOp{\ca T_{\leq n{+}1}}.} \end{array} \]
Recall from section(\ref{ssec:overview-weak-units}) that for a $\ca T_{\leq n{+}1}$-operad $B$ over $\Set$, the operad $h(B)$ describes the structure that the homs of a $B$-algebra have, and is constructed formally by taking the unary part of the $\ca T_{\leq n}^{\times}$-multitensor associated to $B$ by \cite{Weber-MultitensorsMonadsEnrGraphs} theorem(6.1.1). On the other hand given a $\ca T_{\leq n}$-operad $A$, $r(A)$ is the $\ca T_{\leq n{+}1}$-operad $\Gamma(A^{\times})$, whose algebras are categories enriched in $\ca G^n(\Set)^A$ using the cartesian product by proposition(3.2.1) of \cite{Weber-MultitensorsMonadsEnrGraphs} and proposition(2.8) of \cite{BataninWeber-EnHop}. Since $(\Gamma(A^{\times}))_1 = A$ one has that $rh=1$.

We shall prove lemma(\ref{lem:key-lemma-for-iterative-defn}) by first proving
\begin{lemma}\label{lem:h-r-reduced-operads}
The functors $h$ and $r$ defined above restrict to an adjunction
\[ \xygraph{!{0;(3,0):(0,1)::} {\RdOp{\ca T_{\leq n{+}1}}}="l" [r] {\RdOp{\ca T_{\leq n}}}="r" "l":@<1.2ex>"r"^-{h}|-{}="top":@<1.2ex>"l"^-{r}|-{}="bot" "top":@{}"bot"|-{\perp}} \]
between categories of reduced operads.
\end{lemma}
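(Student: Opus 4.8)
The plan is to exhibit $h$ as left adjoint to $r$ by producing, naturally in the reduced $\ca T_{\leq n{+}1}$-operad $B$ and the reduced $\ca T_{\leq n}$-operad $A$, a bijection
\[ \RdOp{\ca T_{\leq n{+}1}}(B,r(A)) \iso \RdOp{\ca T_{\leq n}}(h(B),A). \]
Since $r(A)=\Gamma(A^{\times})$ and $h(r(A))=(\overline{\Gamma(A^{\times})})_1=(A^{\times})_1=A$, one has $hr=1$, so the counit may be taken to be the identity and the candidate bijection is simply $g\mapsto h(g)$. First I would check that $h$ and $r$ really do restrict to the reduced subcategories: a linear $(n{+}1)$-tree has root-degree one and a single branch which is itself linear, so reducedness of $A$ passes branchwise to $\Gamma(A^{\times})$, while dually the unit-type operations of $(\overline{B})_1$ are exactly the root-degree-one unit-type operations of $B$, whence $h(B)$ is reduced when $B$ is.

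The computational heart is the identification, for an $(n{+}1)$-tree $q$ of root-degree $m$ with branches $q_1,\dots,q_m$ (these being $n$-trees), of the operations of $\Gamma(A^{\times})$ of arity $q$ with $\prod_{i=1}^m A_{q_i}$; this is immediate from the defining coproduct formula for $\Gamma$ together with $A$ being over $\Set$. Thus a morphism $g:B\to r(A)$ is a family $g_q=(g_q^{(i)})_{i=1}^m$ with $g_q^{(i)}:B_q\to A_{q_i}$, compatible with substitution and units.

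For the \emph{rigidity} (injectivity of $g\mapsto h(g)$) I would argue as follows. Because $B$ is reduced, for each $i$ there is an inclusion of trees $f_i:\bar q_i\to q$ isolating the $i$-th branch, that is, capping off all the others; its fibres are all linear, so by the discussion of substitution for reduced operads it induces an honest function $\sigma^B_{f_i}:B_q\to B_{\bar q_i}$, and $\bar q_i$ has root-degree one with single branch $q_i$, so $B_{\bar q_i}$ is precisely the operation-set $(h(B))_{q_i}$. Since $A^{\times}$ is \emph{cartesian} there is no interaction between the branches, so the substitution of $\Gamma(A^{\times})$ along $f_i$ is nothing but the projection $\prod_j A_{q_j}\to A_{q_i}$; compatibility of $g$ with $\sigma_{f_i}$ therefore forces
\[ g_q^{(i)} = h(g)\comp\sigma^B_{f_i}, \]
so that $g$ is completely determined by $h(g)$.

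This same formula supplies the inverse: given an operad map $f:h(B)\to A$ I would set $\theta_q^{(i)}:=f\comp\sigma^B_{f_i}$ and assemble $\theta_q:=(\theta_q^{(i)})_i:B_q\to\prod_i A_{q_i}$. The content of surjectivity is then to verify that this $\theta$ is a morphism of $\ca T_{\leq n{+}1}$-operads, i.e. that it respects the operadic unit and the substitution $\sigma^B_g$ along an \emph{arbitrary} tree morphism $g$, not merely along the isolation inclusions. I expect this verification to be the main obstacle. It amounts to a coherence statement to the effect that, in a reduced operad, substituting the unique unit-type operations commutes suitably with general substitution — so that isolating a branch of a composite agrees with composing the isolated branches; granted this, the cartesianness of $A^{\times}$ (which makes its substitution branchwise, with no cross-terms) together with $f$ being an $A$-operad map closes the required squares. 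Finally, naturality of the bijection in $A$ and $B$ is routine, and with it one concludes $h\ladj r$ on reduced operads.
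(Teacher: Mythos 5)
Your strategy is in substance the paper's own, reorganised as a hom-set bijection: the branch-isolating substitutions $\sigma^B_{\pi_i}:B_q\to B_{(q_i)}$ along the inclusions $\pi_i:(q_i)\to q$ are exactly the components $\nu_{B,q,i}=B(\pi_i)$ of the unit $\nu_B:B\to rh(B)$ that the paper constructs, and your bijection $g\mapsto h(g)$ with identity counit is the same adjunction. Your injectivity argument is correct (compatibility of $g$ with $\sigma_{\pi_i}$, plus the observation that substitution along $\pi_i$ in $\Gamma(A^{\times})$ is the product projection, forces $g_q^{(i)}=h(g)_{q_i}\comp\sigma^B_{\pi_i}$), and so is the identification $r(A)_q\iso\prod_i A_{q_i}$. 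A small omission in the "restriction to reduced subcategories" step: a linear $(n{+}1)$-stage tree need not have root-degree one, since $z^{n+1}U_0$ is the empty sequence; reducedness of $r(A)$ at that arity holds because the empty product is a singleton.

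The genuine gap is precisely the step you defer with "granted this": the verification that $\theta=(f\comp\sigma^B_{\pi_i})_{i}$ respects substitution along an \emph{arbitrary} morphism of $(k{+}1)$-stage trees, not merely along the isolating inclusions. This is not a routine closing of squares; it is the entire computational content of the paper's proof. One must take a general $f:q\to p$ with fibres $(r_1,\dots,r_s)$, re-index the fibres as $r_{ij}$ according to which branch $p_i$ of $p$ they lie over, decompose each as a sequence $r_{ij}=(r_{ij1},\dots,r_{ijl_i})$ of trees one dimension down, and compare the two composites around the naturality square after projecting onto each factor $B_{(q_{i\alpha})}$. Both sides then become explicit iterated substitutions in $B$ with the unique operations of linear arity inserted into carefully counted blocks of slots (the counts $s_{ij}$ and $t_{ij\alpha}$ of leaves matter), and one shows they agree using operad associativity, a unit law, and the reducedness identity $x[v,\dots,v]=v$. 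Your "coherence statement" that isolating a branch of a composite agrees with composing the isolated branches is exactly this calculation; asserting it is not proving it, and without it the surjectivity half of your bijection is unestablished. Note also that it suffices to do this for $f=\id_{h(B)}$, i.e.\ to show $\nu_B$ itself is an operad map, since your $\theta$ is then the composite of operad maps $r(f)\comp\nu_B$ — this is how the paper organises the work.
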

\noindent and then denoting by $\CtRdOp{\ca T_{\leq n}}$ the category of reduced $\ca T_{\leq n}$-operads with chosen unital contractions, we shall prove
\begin{lemma}\label{lem:h-r-reduced-contractible-operads}
The adjunction of lemma(\ref{lem:h-r-reduced-operads}) lifts to an adjunction
\[ \xygraph{!{0;(3,0):(0,1)::} {\CtRdOp{\ca T_{\leq n{+}1}}}="l" [r] {\CtRdOp{\ca T_{\leq n}}}="r" "l":@<1.2ex>"r"^-{h}|-{}="top":@<1.2ex>"l"^-{r}|-{}="bot" "top":@{}"bot"|-{\perp}} \]
between categories of reduced operads with chosen unital contractions.
\end{lemma}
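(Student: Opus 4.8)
The plan is to lift both functors of lemma(\ref{lem:h-r-reduced-operads}) along the faithful forgetful functors $\CtRdOp{\ca T_{\leq n}} \to \RdOp{\ca T_{\leq n}}$ that discard the chosen unital contractions, and then to transport the adjunction across these lifts. Throughout I use that, by definition(\ref{def:choice-unital-contractions}), a choice of unital contractions is an $R_1$-algebra structure for the algebraic weak factorisation system of proposition(\ref{prop:algebraic weak factorisation system-for-PtRedColl}), which by (\ref{diag:choice-of-contractions}) amounts to a coherent family of fillers $\gamma(p,a,b)$, compatible with inclusions of trees and satisfying \emph{unique} lifting against the strict (stage-$n$ non-linear) trees. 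Since the counit of $h \ladj r$ is invertible (as $h\circ r \iso 1$ by $(\Gamma(A^{\times}))_1 \iso A$), it will suffice to lift each functor and then to check that the unit lives over the contractible categories.

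First I would lift $h$. The homs construction is, on operations, restriction along the suspension $q \mapsto \hat q$ that sends a tree $q$ in $\tn{Tr}_{\leq n}$ to the tree $\hat q$ with a single height-one node carrying $q$ above it, so that $h(B)_q \iso B_{\hat q}$. This suspension is fully faithful on the relevant subcategories of $\Psi_{\leq n}$, preserves non-linearity and leaf-count, commutes with truncation, and intertwines the inductive pushout definitions of the boundaries, whence it carries each generating cofibration $\phi_q$ to $\phi_{\hat q}$ and each strict stage-$n$ tree to a strict stage-$(n{+}1)$ tree. Therefore an $R_1$-algebra structure on $B$ restricts, functorially and preserving unique lifting, to one on $h(B)$, giving a lift $\tilde h$ over $h$.

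The harder step is to lift $r = \Gamma((-)^{\times})$. Given a contractible reduced operad $(A,\gamma^A)$, I would build fillers for $r(A) = \Gamma(A^{\times})$ by cases on the number $m$ of height-one nodes of a non-linear tree $p$. When $m=1$ one has $p = \hat q$ and $r(A)_p \iso A_q$, so the filler is $\gamma^A(q,-,-)$. When $m>1$ the operations of arity $p$ decompose as a product $\prod_i A_{q_i}$ over the subtrees $q_i$ rooted at the height-one nodes, and the filler is assembled from the $\gamma^A(q_i,-,-)$ through the strict cartesian composition in the enriching dimension. The main obstacle is to verify that this assignment is genuinely an $R_1$-algebra structure for the algebraic weak factorisation system of proposition(\ref{prop:algebraic weak factorisation system-for-PtRedColl}): that it is compatible with every inclusion $f : p \to p'$ — in particular those enlarging $m$ by inserting unit edges — and that it lifts \emph{uniquely} against the strict (stage-$(n{+}1)$) trees. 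Both points are precisely where strict units are used: the edge-inserting compatibility is the unit-compatibility already carried by $\gamma^A$ together with the strict unit laws of the cartesian product; and for a strict $p$ each subtree $q_i$ lies in $\tn{Tr}_n$, so the non-linear $q_i$ are themselves strict in $A$ with unique fillers, forcing the product filler to be unique. This yields a functorial lift $\tilde r$ over $r$.

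Finally I would assemble the adjunction $\tilde h \ladj \tilde r$. It remains only to check that the unit $\eta_B : B \to r(h(B)) = \Gamma(h(B)^{\times})$ is a morphism of contractible operads for the transported contractions. On operations $\eta_B$ is the map whose $i$-th component substitutes the unique unit operations of $B$ on all height-one nodes but the $i$-th and projects onto the corresponding hom; since $\gamma^{r(h(B))}$ is by construction reassembled from $\gamma^{h(B)}$ (itself the suspension-restriction of $\gamma^B$) by exactly such unit substitutions, the assertion that $\eta_B$ commutes with the contractions is precisely the unit-compatibility of $\gamma^B$. Thus $\eta$ and the invertible counit lift to the contractible categories, and the triangle identities hold upstairs because the faithful forgetful functors carry them to the triangle identities of lemma(\ref{lem:h-r-reduced-operads}). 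This establishes the lifted adjunction.
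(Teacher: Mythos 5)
Your proposal is correct and follows essentially the same route as the paper: the lift of $h$ is restriction of the contraction along the suspension $q \mapsto (q)$, the lift of $r$ is assembled componentwise from the contractions over the height-one subtrees $p_i$ of $p=(p_1,\ldots,p_m)$, and the unit $\nu_B$ is checked to preserve contractions by projecting onto each $B_{(p_i)}$ and invoking the unit-compatibility of $\gamma^B$ with respect to the inclusions $\pi_i:(p_i)\to p$. Your explicit verification of \emph{unique} lifting against the strict (top-dimensional non-linear) trees is a point the paper's proof leaves implicit, but it does not change the method.
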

\noindent Since lemma(\ref{lem:key-lemma-for-iterative-defn}) is then just the statement that this lifted $h$ preserves the initial object, it follows immediately from lemma(\ref{lem:h-r-reduced-contractible-operads}).

We now describe the object maps of $h$ and $r$ in more elementary terms. Recall from \cite{Weber-MultitensorsMonadsEnrGraphs} section(3), that given a category $V$ with an initial object, a finite sequence $(X_1,...,X_m)$ of objects of $V$ can be regarded as a $V$-graph whose set of objects is $\{0,...,m\}$, $(X_1,...,X_m)(i,i{+}1)=X_i$ and the other homs are initial. This simple construction enables an inductive definition of ``$k$-dimensional globular pasting diagram'':
\begin{itemize}
\item base case $k=0$: the representable globular set $0$, with one vertex and no edges, is the unique $0$-dimensional pasting diagram.
\item inductive step: a $(k{+}1)$-dimensional pasting diagram $p$ is a sequence $(p_1,...,p_m)$ of $k$-dimensional pasting diagrams.
\end{itemize}
which makes the connection with trees very transparent. So a $(k{+}1)$-stage tree $p$ can be regarded as a sequence of maps in $\Delta_+$ as on the left
\[ \begin{array}{lccr} {p^{(k{+}1)} \to ... \to p^{(1)} \to p^{(0)}{=}1} &&& {\sum_{i=1}^m p^{(k)}_i \to ... \to \sum_{i=1}^m p^{(0)}_i \to 1} \end{array} \]
or as a sequence of $k$-stage trees $(p_1,...,p_m)$ as indicated on the right in the previous display where $p^{(1)}=m$. In particular, the tree $(p)$ is obtained from the tree $p$ by adding a new root and an edge from the new root to the old root. With these preliminaries in hand one has, by unpacking the definitions, the formulae
\[ \begin{array}{lccr} {h(B)_p = B_{(p)}} &&& {r(A)_p = \prod\limits_{i=1}^k A_{p_i}} \end{array} \]
for $p=(p_1,...,p_m) \in \tn{Tr}_{k+1}$, expressing the effect of $h$ and $r$ at the level of sets of operations.

For a given morphism of $(k{+}1)$-stage trees $f:q \to p$ with fibres $(r_1,...,r_s)$ we shall need, for the proof of lemma(\ref{lem:h-r-reduced-operads}) below, to unpack a description of corresponding substitution function
\[ \sigma^{rh(B)}_f : rh(B)_p \times \left(rh(B)_{r_1} \times_{rh(B)_{r'_1}} ... \times_{rh(B)_{r'_{s{-}1}}} rh(B)_{r_s}\right) \longrightarrow rh(B)_q \]
for $rh(B)$ in terms of substitution functions for $B$. To do so, we must describe the morphism $f:q \to p$ of $(k{+}1)$-trees in the same inductive terms as we did trees in the previous paragraph. So we let $q^{(1)}=l$ and denote by $(l_1,...,l_m)$ the cardinality of the fibres of $f^{(1)}:l \to m$ which is order preserving by definition. Now let us reindex the sequence of fibres $(r_1,...,r_s)$ of $f$ to keep track of over which $p_i$ a given fibre of $f$ lives, so we write
\[ (r_1,...,r_s) = (r_{11},...,r_{ms_m}) \]
that is to say, $r_{ij}$ for $1 \leq i \leq m$ and $1 \leq j \leq s_i$, is the $j$-th fibre of $f$ that lives over $p_i$. Moreover we shall describe each of the $r_{ij}$ as sequences of trees one dimension lower as in
\[ r_{ij} = (r_{ij1},...,r_{ijl_i}). \]
Thus the inductive description of the components of $f$ is as
\[ f^{(\delta{+}1)} = (f^{(\delta)}_{11},...,f^{(\delta)}_{1l_1}) + ... + (f^{(\delta)}_{m1},...,f^{(\delta)}_{ml_m}) \]
where $f_{i\alpha}:q_{i\alpha} \to p_i$ for $1 \leq i \leq m$ and $1 \leq \alpha \leq l_m$, is the morphism of $k$-stage trees whose fibres are $(r_{i1\alpha},...,r_{is_i\alpha})$. By this notation the number of leaves of $p$ is $s$, and the number of leaves of $p_i$ is $s_i$. Let us denote by $t_{ij\alpha}$ the number of leaves of $r_{ij\alpha}$. Thus $r_{ij}$ has $\sum_{\alpha} t_{ij\alpha}$ leaves, and $q$ has $\sum_{i,j,\alpha} t_{ij\alpha}$ leaves. With these details in hand, one may readily verify that $\sigma^{rh(B)}_f$ may be rewritten as a product, varying over $i$ and $\alpha$, of the functions
\[ \sigma^{B}_{f_{i\alpha}} : B_{(p_i)} \times \left(B_{(r_{i1\alpha})} \times_{B_{(r'_{il\alpha})}} ... \times_{B_{(r'_{is_{i{-}1}\alpha})}} B_{(r_{is_i\alpha})}\right) \longrightarrow  B_{(q_{i\alpha})}. \]
\begin{proof}
(\emph{of lemma(\ref{lem:h-r-reduced-operads})}): by the above explicit formulae, $h$ and $r$ send reduced operads to reduced operads. Given a reduced $\ca T_{\leq n{+}1}$-operad $B$, we shall describe a morphism as on the left in
\[ \begin{array}{lccr} {\nu_B : B \longrightarrow rh(B)} &&& {\nu_{B,p,i} : B_p \longrightarrow B_{(p_i)}} \end{array} \]
\[  \]
of $\ca T_{\leq n{+}1}$-operads, naturally in $B$, such that $h\nu_B = \id$ and $\nu_B r = \id$, so that $\nu$ is the unit of the desired adjunction in which the counit is the identity. To give the morphism of underlying $\ca T_{\leq n{+}1}$-collections, it suffices by the explicit descriptions of $r$ and $h$ spelled out above, to give functions $\nu_{B,p,i}$ as indicated on the right in the previous display, where $0 < k \leq n$, $p=(p_1,...,p_m) \in \tn{Tr}_{k{+}1}$ and $1 \leq i \leq m$, satisfying appropriate naturality conditions. Denoting by $\pi_i:(p_i) \to p$ the canonical inclusion of trees, we define $\nu_{B,p,i}$ from the base change function in the underlying pointed reduced collection, namely as $\nu_{B,p,i} = B(\pi_i)$. This clearly defines a morphism of $\ca T_{\leq n}$-collections.

Intuitively, $\nu_B$ is compatible with the operad structures since it was defined by substitution of unit operations. Formally, for any morphism $f:q \to p$ of trees whose fibres are are denoted as $(r_1,...,r_s)$, we must show that
\begin{equation}\label{diag:operadicness-nuB}
\begin{aligned}
\xygraph{!{0;(5,0):(0,.3)::} {B_p \times \left(B_{r_1} \times_{B_{r_1'}} ... \times_{B_{r_{s{-}1}'}} B_{r_s}\right)}="tl" [r] {B_q}="tr" [d]
{rh(B)_q}="br" [l] {rh(B)_p \times \left(rh(B)_{r_1} \times_{rh(B)_{r_1'}} ... \times_{rh(B)_{r_{s{-}1}'}} rh(B)_{r_s}\right)}="bl" "tl":"tr"^-{\sigma^{B}_f}:"br"^-{\nu_{B,q}}:@{<-}"bl"^-{\sigma^{rh(B)}_f}:@{<-}"tl"|-{\nu_{B,p} \times (\nu_{B,r_1} \times_{\nu_{B,r_1'}} ... \times_{\nu_{B,r_{s{-}1}'}} \nu_{B,r_s})}}
\end{aligned}
\end{equation}
commutes. It suffices to verify this commutativity after composition with each projection $\tn{proj}_{i\alpha}:rh(B)_q \to B_{(q_{i\alpha})}$. We do this now using the notation established above for $f:q \to p$. For
\[ (b,b_{11},...,b_{ms_m}) \in B_p \times \left(B_{r_1} \times_{B_{r_1'}} ... \times_{B_{r_{s{-}1}'}} B_{r_s}\right) \]
the effect of $\tn{proj}_{i\alpha}$ composed with the top path of (\ref{diag:operadicness-nuB}) on this element is
\begin{equation}\label{eq:top-path-opnuB}
\left(b[b_{11},...,b_{ms_m}]\right)\left[\underbrace{v,...,v,}_{\sum_X t_{i'j\alpha'}} \underbrace{u,...,u,}_{\sum_j t_{ij\alpha}} \underbrace{v,...,v}_{\sum_Y t_{i'j\alpha'}} \right]
\end{equation}
where
\[ \begin{array}{lll} X &=& {\left\{(i',\alpha') \,\, : \,\, i' < i \,\, \tn{or} \,\, (i'=i \,\, \tn{and} \,\, \alpha'<\alpha)\right\}} \\
Y &=& {\{(i',\alpha') \,\, : \,\, i' > i \,\, \tn{or} \,\, (i'=i \,\, \tn{and} \,\, \alpha'>\alpha)\}.} \end{array} \]
On the other hand the effect on $(b,b_{11},...,b_{ms_m})$ of $\tn{proj}_{i\alpha}$ composed with the bottom path of (\ref{diag:operadicness-nuB}) is
\begin{align}
\left(b[\underbrace{v,...,v,}_{\sum_{i'<i,j}s_{i'j}} \underbrace{u,...,u,}_{\sum_j s_{ij}} \underbrace{v,...,v}_{\sum_{i'>i,j}s_{i'j}}]\right)
& \left[b_{i1}[\underbrace{v,...,v,}_{\sum_{\alpha'<\alpha}t_{i1\alpha'}} \underbrace{u,...,u,}_{t_{i1\alpha}} \underbrace{v,...,v}_{\sum_{\alpha'>\alpha}t_{i1\alpha'}}], ... \right. \nonumber \\
& \left. \qquad ..., b_{is_i}[\underbrace{v,...,v,}_{\sum_{\alpha'<\alpha}t_{is_i\alpha'}} \underbrace{u,...,u,}_{t_{is_i\alpha}} \underbrace{v,...,v}_{\sum_{\alpha'>\alpha}t_{is_i\alpha'}}]\right] \label{eq:bot-path-opnuB}
\end{align}
Using the associativity of substitution, and that for all operations $x$ of $B$ one has $x[v,...,v]=v$ by reducedness, (\ref{eq:top-path-opnuB}) equals
\begin{align}
b\left[\underbrace{v,...,v,}_{\sum_{i'<i,j}s_{i'j}} \right. & b_{i1}[\underbrace{v,...,v,}_{\sum_{\alpha'<\alpha}t_{i1\alpha'}} \underbrace{u,...,u,}_{t_{i1\alpha}} \underbrace{v,...,v}_{\sum_{\alpha'>\alpha}t_{i1\alpha'}}],...\nonumber \\
& \left. ..., b_{is_i}[\underbrace{v,...,v,}_{\sum_{\alpha'<\alpha}t_{is_i\alpha'}} \underbrace{u,...,u,}_{t_{is_i\alpha}} \underbrace{v,...,v}_{\sum_{\alpha'>\alpha}t_{is_i\alpha'}}],\underbrace{v,...,v}_{\sum_{i'>i,j}s_{i'j}}\right]\label{eq:for-opnuB}
\end{align}
and by applying operad associativity and one of the operad unit laws to (\ref{eq:bot-path-opnuB}), one obtains (\ref{eq:bot-path-opnuB})$=$(\ref{eq:for-opnuB}).
\end{proof}
\begin{proof}
(\emph{of lemma(\ref{lem:h-r-reduced-contractible-operads})}): we must explain how the functors $r$ and $h$ of lemma(\ref{lem:h-r-reduced-operads}) lift to the level of reduced operads with chosen unital contractions, and then show that the operad map $\nu_B$ is compatible with the chosen unital contractions.

Recall that the data of a choice $\gamma$ of contractions for $B \in \RdOp{\ca T_{\leq n}}$ amounts to morphisms
\[ \gamma(p,a,b) : p \longrightarrow B \]
in $\PtRdColl{\ca T_{\leq n}}$, for all $0 \leq k \leq n$, non-linear $p \in \tn{Tr}_{k{+}1}$ and $a,b \in B_{\tn{tr}(p)}$ such that $s(a)=s(b)$ and $t(a)=t(b)$, and that this choice is unital when it satisfies
\[ \gamma(q,a\tn{tr}(f),b\tn{tr}(f)) = \gamma(p,a,b)f \]
for all $p,a,b$ as above and tree inclusions $f:q \to p$. Given $\gamma$ and recalling the explicit description of $h(B)$ given above, $\gamma'(p,a,b)=\gamma((p),a,b)$ defines a choice $\gamma'$ of contractions for $h(B)$. The unitality of $\gamma'$ with respect to an inclusion $f:q \to p$ follows from that of $\gamma$ with respect to the evident inclusion $(f):(q) \to (p)$. Clearly if $F:B \to B'$ preserves chosen contractions, then so does $h(F)$. Thus $h$ lifts to the level of operads with chosen unital contractions.

Suppose that a choice $\psi$ of unital contractions for $A \in \RdOp{\ca T_{\leq n}}$ is given. Recalling the explicit description of $r(A)$ given above,
\[ \psi'(p,(a_1,...,a_m),(b_1,...,b_m)) = (\psi(p_1,a_1,b_1),...,\psi(p_m,a_m,b_m)) \]
where $p=(p_1,...,p_m)$, defines a choice of contractions for $r(A)$. Note that the inductive description of a tree morphism $f:q \to p$ given above simplifies in the case where $f$ is an inclusion, because $f^{(1)}$ is injective and so $l_i=0$ or $1$, and so one may write $f$ as a composite
\[ (q_1,...,q_l) \xrightarrow{(f_1,...,f_l)} (p_{f^{(1)}(1)},...,p_{f^{(1)}(l)}) \xrightarrow{\tn{ss}_{f^{(1)}}} (p_1,...,p_m) \]
where $\tn{ss}_{f^{(1)}}$ is the subsequence inclusion corresponding to the injection $f^{(1)}$ interpretted as an inclusion of trees in the evident way, and the $f_i$ are themselves inclusions of trees. By definition $\psi'$ is compatible with such subsequence inclusions. Since $\psi$ is unital, $\psi'$ is compatible with tree inclusions of the form $(f_1,...,f_l)$ as above, and so $\psi'$ is itself unital. Clearly if $G:A \to A'$ preserves chosen contractions, then so does $r(G)$. Thus $r$ lifts to the level of operads with chosen unital contractions.

Let us now verify that $\nu_B$ is compatible with chosen contractions. So given $0 \leq k \leq n$, $p=(p_1,...,p_m) \in \tn{Tr}_{k{+}1}$ and $a,b \in B_{\tn{tr}(p)}$ such that $s(a)=s(b)$ and $t(a)=t(b)$, we must verify
\[ \nu_B(\gamma(p,a,b)) = \gamma''(p,\nu_B(a),\nu_B(b)) \]
and it suffices to verify this equation for all $1 \leq i \leq m$ after applying the projection $\tn{proj}_i:rh(B)_p \to B_{(p_i)}$. Recalling the inclusion $\pi:(p_i) \to p$ of trees, by the definition of $\nu_B$ we have
\[ \tn{proj}_i\nu_B(\gamma(p,a,b)) = \gamma(p,a,b)\pi_i \]
whereas
\[ \begin{array}{lllll} {\tn{proj}_i\gamma''(p,\nu_B(a),\nu_B(b))} &=& {\gamma'(p_i,a\tn{tr}(\pi_i),b\tn{tr}(\pi_i))} &=& {\gamma((p_i),a\tn{tr}(\pi_i),b\tn{tr}(\pi_i))} \\
&=& {\gamma(p,a,b)\pi_i} && \end{array} \]
in which the first two equalities follow from the definitions, and the last follows from the unitality of $\gamma$ with respect to $\pi_i$.
\end{proof}
\begin{remark}\label{rem:product-comparisons-reduced-context}
For a reduced $\ca T_{\leq n{+}1}$-operad $B$ we have the component $\nu_B:B \to rh(B)$ of the unit of the above adjunctions. If we denote by $E$ the $\ca T_{\leq n}^{\times}$-multitensor such that $\Gamma E = B$, then $E_1^{\times}$ is the $\ca T_{\leq n}^{\times}$-multitensor such that $\Gamma E_1^{\times} = rh(B)$, and then $\nu_B$ is the effect of $\Gamma$ on a $\ca T_{\leq n}^{\times}$-multitensor map $\kappa_E:E \to E_1^{\times}$, whose unary part is the identity. The natural transformation $\kappa_E$ can be regarded as the coherence data for a lax monoidal functor
\[ (1,\kappa_E) : (\ca G^n(\Set),E_1^{\times}) \longrightarrow (\ca G^n(\Set),E), \]
applying $C$ from proposition(\ref{prop:2-fun-lifting-thm}) to this gives
\[ \begin{array}{c} {(1,\kappa_E') : (\ca G^n(\Set)^{E_1},\prod) \longrightarrow (\ca G^n(\Set)^{E_1},E')} \end{array} \]
and the components of $\kappa_E'$ are of the form
\[ \begin{array}{c} {(\kappa_E')_{Z_1,...,Z_n} : \opEpr\limits_{i=1}^n Z_i \longrightarrow \prod\limits_{i=1}^n Z_i} \end{array} \]
defined for all finite sequences $(Z_1,...,Z_n)$ of $E_1$-algebras. In particular one has comparison maps between the functor operads $\ca L_{\leq n}$ of theorem(\ref{thm:iterative-defn}) and the cartesian product of weak $n$-categories with strict units.
\end{remark}

\section*{Acknowledgements}
\label{sec:Acknowledgements}
We would like to acknowledge Clemens Berger, Richard Garner, Andr\'{e} Joyal, Steve Lack, Joachim Kock, Jean-Louis Loday, Paul-Andr\'{e} Melli\`{e}s, Ross Street and Dima Tamarkin
for interesting discussions on the substance of this paper. In particular the third author would like to thank Andr\'{e} Joyal, who already in 2006 during some inspiring discussions indicated that a strictly unital version of operadic higher category theory ought to be better behaved, and gave some indication at that time already of how the theory of reduced operads should work. We would also like to acknowledge the Centre de Recerca Matem\`{a}tica in Barcelona for the generous hospitality and stimulating environment provided during the thematic year 2007-2008 on Homotopy Structures in Geometry and Algebra.

The first author would like to acknowledge the financial support on different stages of this project of the Scott Russell Johnson Memorial Foundation, the Australian Research Council grant No.DP0558372, and L'Universit\'{e} Paris 13. For the later stages of the project, the first and third authors acknowledge the financial support of the Australian Research Council grant grant No.DP1095346. The second author would like to acknowledge the support of the ANR grant no. ANR-07-BLAN-0142. The third author would like to acknowledge the laboratory PPS (Preuves Programmes Syst\`{e}mes) in Paris, the Max Planck Institute in Bonn, the IHES and the Macquarie University Mathematics Department for the excellent working conditions he enjoyed during this project.

\bibliographystyle{plain}

\end{document}